\providecommand{\U}[1]{\protect\rule{.1in}{.1in}}
\newtheorem{X}{X}[section]
\newtheorem{conjecture}[X]{Conjecture}
\newtheorem{corollary}[X]{Corollary}
\newtheorem{lemma}[X]{Lemma}
\newtheorem{glemma}[X]{Goursat's Lemma}
\newtheorem{proposition}[X]{Proposition}
\newtheorem{theorem}[X]{Theorem}
\newtheorem*{mtconjecture}[X]{Mumford--Tate Conjecture}
\newtheorem*{tconjecture}[X]{Tate Conjecture}
\newtheorem*{fconjecture}[X]{Folklore Conjecture}
\newtheorem*{lconjecture}[X]{Lefschetz Standard Conjecture}
\newtheorem*{hconjecture}[X]{Hodge Standard Conjecture}
\newtheorem{example}[X]{Example}
\newtheorem{plain}[X]{}
\newtheorem{remark}[X]{Remark}
\newtheorem{summary}[X]{Summary}
\newtheorem{aside}[X]{Aside}
\theoremstyle{nonumberplain}
\newtheorem{proof}{Proof}
\newcommand{\df}{\smash{\lower.12em\hbox{\textup{\tiny def}}}}
\let\quoteOld\quote
\let\endquoteOld\endquote
\renewenvironment{quote}{\small\quoteOld}{\endquoteOld}
\definecolor{bblue}{rgb}{0.0, 0.0, 0.6}
\tikzset{commutative diagrams/column sep/Huge/.initial=12ex}
\setlist{topsep=0.1em,itemsep=0.1em,parsep=0.1em}
\newcommand{\eb}[1]{{\itshape\bfseries#1}}
\renewcommand{\emph}{\eb}
\titleformat*{\section}{\LARGE\bfseries}
\titleformat*{\subsection}{\Large\itshape}
\titleformat*{\subsubsection}{\scshape}
\titleformat*{\paragraph}{\itshape}
\let\cite\citealt
\newcommand\babstract{\begin{abstract}}\newcommand\eabstract{\end{abstract}}
\newcommand{\bcomment}{}
\newcommand{\bfootnotesize}{\begin{footnotesize}}\newcommand\efootnotesize{\end{footnotesize}}
\newcommand{\bquote}{\begin{quote}}\newcommand\equote{\end{quote}}
\newcommand{\bsmall}{\begin{small}}\newcommand\esmall{\end{small}}
\newcommand{\btable}{\begin{table}}\newcommand{\etable}{\end{table}}
\newcommand{\edocument}{
\setcounter{tocdepth}{1}
\setcounter{secnumdepth}{2}
\renewcommand*{\thesubsection}{\upshape\alph{subsection}.}
\tikzcdset{arrow style=math font}
\begin{document}

\title{The Tate and Standard Conjectures for Certain Abelian Varieties}
\author{J.S.\ Milne}
\date{February 6, 2022}
\maketitle

\begin{abstract}
In two earlier articles, we proved that, if the Hodge conjecture is true for
\textit{all} CM abelian varieties over $\mathbb{C}$, then both the Tate
conjecture and the standard conjectures are true for abelian varieties over
finite fields. Here we rework the proofs so that they apply to a single
abelian variety. As a consequence, we prove (unconditionally) that the Tate
and standard conjectures are true for many abelian varieties over finite
fields, including abelian varieties for which the algebra of Tate classes is
\textit{not} generated by divisor classes. The article is partly expository.

\end{abstract}

\tableoc

\bigskip In earlier articles (1999b, 2002), we proved that, if the Hodge
conjecture holds for \textit{all} CM abelian varieties, then the Tate and
standard conjectures hold for abelian varieties over finite
fields.\footnote{Thus, if the Hodge standard conjecture fails for a single
abelian variety, then everything --- the Hodge, Tate, and Grothendieck
conjectures --- fails. From a more optimistic perspective, with the proof of
the algebraicity of Weil's classes in the first interesting case, namely, for
fourfolds with determinant $1$, we may hope that this will be proved for a
widening collection of abelian varieties. Then the methods of this paper will
show that the Tate and standard conjectures are also true for a widening
collection of abelian varieties. At some point we may dare to believe that the
conjectures are true for all abelian varieties.} In this article, we rework
the proofs so that they apply to a single abelian variety. We prove (Theorem
\ref{j11}) that if an abelian variety $A$ satisfies a certain condition
(\ref{blah}), then the Hodge conjecture for the powers of $A$ implies both the
Tate and standard conjectures for the powers of $A\text{ mod }p$. Using this,
we obtain a number of new results, among which is the theorem below.

We say that an abelian variety $A$ over an algebraically closed field $k$ of
characteristic zero (resp.~over $\mathbb{F}$) is neat if no power of it
supports an exotic Hodge class (resp.~an exotic Tate class). The theorems of
Lefschetz and Tate show that the Hodge and Tate conjectures hold for neat
abelian varieties. Let $\mathbb{Q}{}^{\mathrm{al}}$ denote the algebraic
closure of $\mathbb{Q}{}$ in $\mathbb{C}{}$ and $\mathbb{F}{}$ its residue
field at a prime dividing $p$.

\begin{theorem}
\label{j0}Let $A$ be a simple abelian variety over $\mathbb{Q}^{\mathrm{al}}$
of dimension $n$ with good reduction to a simple abelian variety $A_{0}$ over
$\mathbb{F}{}$, and let $B$ be a CM elliptic curve over $\mathbb{Q}%
{}^{\mathrm{al}}$ with good reduction $B_{0}$. Suppose that both $A$ and
$A_{0}$ are neat, but that neither $A\times B$ nor $A_{0}\times B_{0}$ is
neat.\footnote{To make the statement interesting.}

\begin{enumerate}
\item For some $m$, there is an imaginary quadratic field $Q\subset
\End^{0}(A\times B^{m})$ such that $(A\times B^{m},Q)$ is of Weil type.

\item If the Weil classes on $(A\times B^{m},Q)$ are algebraic, then, for all
$r,s\in\mathbb{N}$,

\begin{enumerate}
\item the Hodge conjecture holds for the abelian varieties $A_{\mathbb{C}{}%
}^{r}\times B_{\mathbb{C}{}}^{s}{}$;

\item the Tate conjecture holds for the abelian varieties $A^{r}\times B^{s}$;

\item the Tate conjecture holds for the abelian varieties $A_{0}^{r}\times
B_{0}^{s}$;

\item the standard conjectures hold for the abelian varieties $A_{0}^{r}\times
B_{0}^{s}{}$.
\end{enumerate}
\end{enumerate}
\end{theorem}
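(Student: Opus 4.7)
My plan is to exploit the non-neatness of $A\times B$ (combined with the neatness of $A$ and $B$ individually) to force the CM field $Q=\End^0(B)$ to act on $A$, and then to balance signatures. Since $A$ and $B$ are neat but $A\times B$ is not, the Mumford--Tate group $\MT(A\times B)$ is strictly smaller than $\MT(A)\times\MT(B)$ in a way that goes beyond the usual weight identification; Goursat's lemma then presents it as a fibre product over a nontrivial common quotient. Because $\MT(B)$ is the CM torus attached to $Q$, this common quotient is a torus encoding the $Q$-action, which forces $Q\hookrightarrow\End^0(A)$. Let $(p,q)$, $p+q=n$, be the signature of the resulting $Q$-action on $\Tgt_0 A$. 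After possibly replacing the embedding $Q\hookrightarrow\End^0(B)$ by its complex conjugate, I may assume $q\geq p$; taking $m=q-p$ and letting $Q$ act diagonally on $B^m$ (signature $(m,0)$) balances the total signature on $\Tgt_0(A\times B^m)$ to $(q,q)$, so $(A\times B^m,Q)$ is of Weil type, proving (1).

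\textbf{Parts (2a) and (2b): Hodge and Tate in characteristic zero.} Assume the Weil classes on $(A\times B^m,Q)$ are algebraic. By Deligne's theorem that every Hodge class on an abelian variety is absolute Hodge, combined with the description of $\MT(A\times B)$ produced in Part~(1), every Hodge class on a product $A_{\mathbb{C}}^{r}\times B_{\mathbb{C}}^{s}$ is expressible, under cup product and the standard correspondence operations, in divisor classes on $A$ and $B$ together with the Weil classes of $(A\times B^m,Q)$. Divisor classes are algebraic, and Weil classes are algebraic by hypothesis, so (2a) follows. For (2b), absolute Hodge classes have matching $\ell$-adic realizations that are Tate classes; since the generating Hodge classes are algebraic, so are their $\ell$-adic images, giving the Tate conjecture for $A^{r}\times B^{s}$.

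\textbf{Parts (2c) and (2d): reduction to $\mathbb{F}$ via Theorem~\ref{j11}.} I would apply Theorem~\ref{j11} to the abelian variety $A\times B$: condition~(\ref{blah}) is verified from the neatness of $A_{0}$ together with the mod-$p$ specialization of the Weil-type structure of Part~(1), and the Hodge conjecture for powers of $(A\times B)_{\mathbb{C}}$ established in (2a) supplies the principal input. The conclusion of Theorem~\ref{j11} is then exactly (2c) and (2d). The main obstacle, and the technical heart of the argument, lies in Part~(1): showing that the common nontrivial quotient of $\MT(A)$ and $\MT(B)$ (beyond the weight torus) must arise from a $Q$-action on $A$, and that the numerical signature $(p,q)$ indeed permits $m=q-p$ to be realized as a Weil-type structure. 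This uses the simplicity of $A$, the Tannakian reformulation of neatness, and the structure theory of Hodge groups of simple abelian varieties.
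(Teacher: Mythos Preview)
Your outline for Part~(1) is essentially the paper's Proposition~\ref{j72}, and your plan for Parts~(2c)--(2d) correctly identifies Theorem~\ref{j11} as the engine. There are, however, two genuine gaps.

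\textbf{Part (2b).} Your argument shows only that the $\ell$-adic realizations of Hodge classes are algebraic. The Tate conjecture demands that \emph{every} $\ell$-adic Tate class be algebraic, and a priori there could be Tate classes not in the $\mathbb{Q}_\ell$-span of the Hodge classes. What is missing is the Mumford--Tate conjecture for $A\times B$: this is exactly the statement that the Tate classes coincide with the $\mathbb{Q}_\ell$-span of the Hodge classes. The paper supplies this by noting that neat abelian varieties satisfy the Mumford--Tate conjecture, and that the conjecture for a product follows from the conjecture for the factors (\cite{vasiu2008}, \cite{commelin2019}); then Theorem~\ref{j92} gives Tate from Hodge. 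Without this step your (2b) does not follow.

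\textbf{Verification of (\ref{blah}).} Your sketch ``neatness of $A_0$ together with the mod-$p$ specialization of the Weil-type structure'' does not pin down why $P(A_0\times B_0)=L(A_0\times B_0)\cap\MT(A\times B)$. The paper's argument makes essential use of the hypothesis that $A_0\times B_0$ is \emph{not} neat: rerunning the Goursat-type argument of Proposition~\ref{j72} with $P'$ and $S$ in place of $\Hg$ and $S$ produces an exact sequence
\[
1\rightarrow P'(A_0\times B_0)\rightarrow S(A_0)\times S(B_0)\rightarrow U_0\rightarrow 1
\]
with $U_0$ one-dimensional. Comparing this with the characteristic-zero sequence and using that $S(B_0)\to S(B)\to U_Q$ are isomorphisms, a diagram chase shows $U_0\hookrightarrow U_Q$, whence $P'(A_0\times B_0)=S(A_0\times B_0)\cap\Hg(A\times B)$, which is (\ref{blah}). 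Your description does not invoke the non-neatness of $A_0\times B_0$ and gives no mechanism for bounding $P'$ from below; as stated it is not a verification.
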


When $n=3$, a recent theorem of Markman shows that the Weil classes on
$(A\times B,Q)$ are algebraic. Thus we obtain new cases of the Hodge, Tate,
and standard conjectures. Apart from \cite{ancona2021}, these are the first
unconditional results on the Hodge standard conjecture since it was stated
over fifty years ago.

We give examples of pairs $A,B$ satisfying the hypotheses of the theorem.

We assume that the reader is familiar with \cite{milne2001}, especially
Appendix A, whose notation we adopt. In particular, $\iota$ or $a\mapsto
\bar{a}$ denotes complex conjugation on $\mathbb{C}$ and its subfields, and
$\mathbb{N}=\{0,1,\ldots\}$. For a CM algebra $E$, we let $U_{E}$ denote the
torus such that $U_{E}(\mathbb{Q}{})=\{a\in E^{\times}\mid a\bar{a}=1\}$. We
regard abelian varieties as objects of the category whose morphisms are
$\Hom^{0}(A,B)\overset{\df}{=}\Hom(A,B)\otimes\mathbb{Q}$. Throughout,
algebraic varieties are connected.

We assume that the reader is familiar with \cite{deligneM1982}, especially
\S 5, and \cite{deligne1989}, \S 5. The fundamental group $\pi(\mathsf{C})$ of
a tannakian category $\mathsf{C}$ is a group \textit{in} $\mathsf{C}$ such
that $\omega(\pi(\mathsf{C}))=\underline{\Aut}^{\otimes}(\omega)$ for all
fibre functors $\omega$, and $\gamma^{C}$ is the functor $\Hom(\1,-)$ from
$\mathsf{C}^{\pi(\mathsf{C)}}$ to vector spaces (an equivalence of tensor categories).

\section{Characteristic zero}

Let $A$ be an abelian variety over an algebraically closed field of
characteristic zero. By a Hodge class on $A$, we mean an absolute Hodge class
in the sense of \cite{deligne1982}. Such a class is said to be \emph{exotic}
if it is not in the $\mathbb{Q}$-algebra generated by the Hodge classes of
degree $1$. According to a theorem of Lefschetz, the nonexotic Hodge classes
are algebraic --- we call them \emph{Lefschetz classes}. We let $B^{\ast}(A)$
denote the $\mathbb{Q}$-algebra of Hodge classes on $A$ and $D^{\ast}(A)$ the
$\mathbb{Q}$-subalgebra generated by the divisor classes (the algebra of
Lefschetz classes).

\subsection{The Lefschetz group}

Let $A$ be an abelian variety over an algebraically closed subfield $k$ of
$\mathbb{C}$. The centralizer $C(A)$ of $\End^{0}(A)$ in $\End(H_{1}%
(A_{\mathbb{C}{}},\mathbb{Q}{}))$ is a $\mathbb{Q}{}$-algebra stable under the
involution $\dagger$ defined by an ample divisor $D$ of $A$, and the
restriction of $\dagger$ to $C(A)$ is independent of the choice $D$. The
\emph{Lefschetz group }$L(A)$ of $A$ is the algebraic group over $\mathbb{Q}%
{}$ such that
\[
L(A)(\mathbb{Q}{})=\big\{\alpha\in C(A)^{\times}\mid\alpha\alpha^{\dagger}%
\in\mathbb{Q}{}^{\times}\big\}.
\]
If $A$ is CM, i.e., if the $\mathbb{Q}{}$-algebra $\End^{0}(A)$ contains an
\'{e}tale $\mathbb{Q}{}$-subalgebra of degree $2\dim A$, then $C(A)$ is the
centre of $\End^{0}(A)$, and this definition makes sense over any
algebraically closed field (not necessarily of characteristic zero).

\subsection{The Mumford--Tate group}

Let $A$ be an abelian variety over an algebraically closed subfield $k$ of
$\mathbb{C}{}$, and let $V=H_{1}(A_{\mathbb{C}{}},\mathbb{Q}{})$. When we let
$L(A)$ act on the cohomology groups $H^{2r}(A_{\mathbb{C}}^{s},\mathbb{Q}%
{})(r)$, $r,s\in\mathbb{N}$, throught the homomorphism%
\[
\alpha\mapsto(\alpha,\alpha\alpha^{\dagger})\colon L(A)\rightarrow
\GL_{V}\times\mathbb{G}_{m}\text{,}%
\]
it becomes the algebraic subgroup of $\GL_{V}\times\mathbb{G}_{m}$ fixing the
Lefschetz classes on all powers of $A$. We define
\[
\MT(A)\subset M(A)\subset L(A)
\]
to be the algebraic subgroups of $L(A)$ fixing, respectively, the Hodge
classes and the algebraic classes on the powers of $A$. Then the Lefschetz
classes are \textit{exactly} the cohomology classes fixed by $L(A)$, and
similarly for the other groups. For the \emph{Mumford--Tate group} $\MT(A)$,
this is easy to prove; for $M(A)$, it follows from the fact that the abelian
motives modulo numerical equivalence form a tannakian category
(\cite{lieberman1968}, \cite{jannsen1992});\footnote{See the Appendix.} for
the Lefschetz group $L(A)$, it is proved by case-by-case checking
(\cite{milne1999lc}).

The kernel of the natural homomorphism $\MT(A)\rightarrow\mathbb{G}_{m}$
(resp.~$L(A)\rightarrow\mathbb{G}_{m})$ is called the \emph{Hodge group}
(resp.~the \emph{special Lefschetz group}) and denoted by $\Hg(A)$
(resp.~$S(A)$). The abelian variety $A$ is isogenous to a product
$A_{1}^{s_{1}}\times\cdots\times A_{m}^{s_{m}}$ with each $A_{i}$ simple and
no two isogenous, and%
\begin{align*}
\Hg(A)=  &  \Hg(A_{1}\times\cdots\times A_{m})=\text{a subproduct of
}\Hg(A_{1})\times\cdots\times\Hg(A_{m}),\\
S(A)=  &  S(A_{1}\times\cdots\times A_{m})=S(A_{1})\times\cdots\times
S(A_{m}).
\end{align*}
The kernel of $M(A)\rightarrow\mathbb{G}_{m}$ is denoted by $M^{\prime}(A)$.

\begin{proposition}
\label{j65}The following conditions on $A$ are equivalent:

\begin{enumerate}
\item no power of $A$ supports an exotic Hodge class;

\item $\MT(A)=L(A)$;

\item $\Hg(A)=S(A)$.
\end{enumerate}
\end{proposition}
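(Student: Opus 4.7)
The plan is to derive both equivalences from the invariant-theoretic descriptions of $\MT(A)$ and $L(A)$ recorded immediately above the proposition. Condition (a) asserts that every Hodge class on every power $A^{s}$ already lies in the $\mathbb{Q}{}$-algebra generated by divisor classes, i.e., is a Lefschetz class. Since $\MT(A)\subset L(A)$, every $L(A)$-invariant cohomology class is automatically $\MT(A)$-invariant, so (a) is equivalent to the equality of the $\MT(A)$- and $L(A)$-invariants in each $H^{2r}(A_{\mathbb{C}{}}^{s},\mathbb{Q}{})(r)$.

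To upgrade this equality of invariants to equality of groups, I would invoke tannakian formalism. The standard representation of $L(A)\subset\GL_{V}\times\mathbb{G}_{m}$ on $V=H_{1}(A_{\mathbb{C}{}},\mathbb{Q}{})$ is faithful, and the K\"{u}nneth formula together with $H^{k}(A_{\mathbb{C}{}},\mathbb{Q}{})=\bigwedge^{k}V^{\vee}$ shows that, as $r,s$ vary, the representations $H^{2r}(A_{\mathbb{C}{}}^{s},\mathbb{Q}{})(r)$ realize every tensor construction on $V$ and $V^{\vee}$ as a subquotient (possibly after Tate twist). A standard consequence of Chevalley's theorem on affine algebraic groups in characteristic zero then asserts that a closed subgroup of $L(A)$ with the same invariants as $L(A)$ on every such representation must equal $L(A)$; applied to $\MT(A)\subset L(A)$, this yields (b). The converse is immediate, giving (a)$\,\Leftrightarrow\,$(b).

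For (b)$\,\Leftrightarrow\,$(c), the inclusion $\MT(A)\subset L(A)$ fits into a morphism of short exact sequences
\[
1\to\Hg(A)\to\MT(A)\to\mathbb{G}_{m}\to 1,\qquad 1\to S(A)\to L(A)\to\mathbb{G}_{m}\to 1,
\]
in which both right-hand arrows are restrictions of the second projection $\GL_{V}\times\mathbb{G}_{m}\to\mathbb{G}_{m}$ (so they agree on $\MT(A)$) and each is surjective: for $L(A)$ because the scalar $\alpha=t\cdot\Id$ satisfies $\alpha\alpha^{\dagger}=t^{2}$ and squaring is surjective on $\mathbb{G}_{m}$, and for $\MT(A)$ because $V$ carries a Hodge structure of nonzero weight, so the Hodge cocharacter has nontrivial image in the second factor. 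The short five lemma then yields $\MT(A)=L(A)\Leftrightarrow\Hg(A)=S(A)$.

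The only substantive ingredient is the tannakian upgrade in the first equivalence; it depends on the cohomology of powers of $A$ being rich enough to detect closed subgroups of $L(A)$, which in turn rests on the faithfulness of the $L(A)$-representation on $V=H_{1}(A_{\mathbb{C}{}},\mathbb{Q}{})$. Everything else is formal diagram-chasing.
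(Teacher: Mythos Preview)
Your proposal is correct and follows essentially the same route as the paper: the equivalence (a)$\Leftrightarrow$(b) comes from the fact, recorded just before the proposition, that $\MT(A)$ and $L(A)$ are each determined by their invariant tensors in the cohomology of the powers of $A$, and (b)$\Leftrightarrow$(c) is the five-lemma applied to the two short exact sequences. You supply more detail than the paper (the tannakian/Chevalley justification for ``same invariants $\Rightarrow$ same group'' and the explicit check that both maps to $\mathbb{G}_{m}$ are surjective), but the argument is the same.
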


\begin{proof}
The equivalence of (a) and (b) follows from the above discussion, and the
equivalence of (b) and (c) follows from the five-lemma. See \cite{milne1999lc}%
, 4.8.
\end{proof}

\noindent An abelian variety satisfying the equivalent conditions of the
proposition is said to be \emph{neat}.\footnote{In other words, the neat
abelian varieties are those for which the Hodge conjecture is true for trivial
reasons.} For example, if $A$ is an abelian variety $A$ such that
$E\overset{\df}{=}\End^{0}(A)$ is a field, then $A$ is neat in each of the
following cases: $E$ is totally real and $\dim(A)/[E\colon\mathbb{Q}]$ is odd;
$E$ is CM and $\dim(A)$ is prime; $E$ is imaginary quadratic and the
representation of $E\otimes\mathbb{C}{}$ on $\Tgt_{0}(A)$ is of the form
$m\rho\oplus n\bar{\rho}$ with $\gcd(m,n)=1$ (Tankeev; \cite{ribet1983}). It
follows that all simple abelian varieties of odd prime dimension are neat.
Abelian surfaces and products of elliptic curves are also neat.

\subsection{Abelian varieties of Weil type}

In the early 1960s, Mumford and Tate tried to prove the Hodge conjecture for
abelian varieties by showing that they are all neat, but then Mumford found a
nonneat simple abelian fourfold (\cite{pohlmann1968} \S 2). Mumford's variety
was CM, but as Weil (1977) explained, the important fact was that the abelian
variety was acted on by an imaginary quadratic field.

Let $A$ be an abelian variety over $\mathbb{C}$, and let $Q$ be an imaginary
quadratic subfield\footnote{By this we mean that it is a $\mathbb{Q}%
$-subalgebra; in particular, the identity elements coincide.} of the
$\mathbb{Q}{}$-algebra $\End^{0}(A)$. Let $\beta\in Q$ be such that
$\bar{\beta}=-\beta$, so $Q=\mathbb{Q}{}[\beta]$ and $\beta\bar{\beta}%
=b\in\mathbb{Q}{}$.

\begin{lemma}
\label{j75}There exists a polarization of $(A,Q)$, i.e., a polarization of $A$
whose Rosati involution stabilizes $Q$ and acts on it as complex conjugation.
\end{lemma}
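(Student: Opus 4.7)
The plan is to construct the desired polarization by modifying an arbitrary one via an averaging procedure that forces compatibility with the $Q$-action. I start with any polarization $\psi_0$ of $A$ --- which exists because $A$ is an abelian variety --- and regard it as an alternating $\mathbb{Q}$-form on $V := H_{1}(A_{\mathbb{C}},\mathbb{Q})$. Put $b := \beta_{0}\bar\beta_{0} \in \mathbb{Q}_{>0}$, so that $\beta_{0}^{2} = -b$.

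I would then define
\[
\psi(x,y) := \psi_{0}(x,y) + b^{-1}\psi_{0}(\beta_{0}x,\beta_{0}y)
\]
and claim that $\psi$ is the polarization sought. Alternation is immediate. That $\psi$ still satisfies the Riemann bilinear relations --- and is therefore a polarization of $A$ --- follows term by term: each summand inherits the relations from $\psi_{0}$, using that $\beta_{0} \in \End^{0}(A)$ is a morphism of Hodge structures (so commutes with the complex structure $J$ on $V_{\mathbb{R}}$) and is invertible, which makes the positivity of the second summand follow from that of $\psi_{0}$ applied to $\beta_{0}x$.

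The only substantive point is the twisting identity
\[
\psi(\beta_{0}x,y) = \psi(x,\bar\beta_{0}y),
\]
which on expanding both sides reduces cleanly to the single relation $\beta_{0}^{2} = -b$. Extending this $\mathbb{Q}$-linearly in $\beta_{0}$ gives $\psi(\beta x,y) = \psi(x,\bar\beta y)$ for every $\beta \in Q$, which is exactly the assertion that the Rosati involution of $\psi$ stabilizes $Q$ and acts there as complex conjugation. There is no real obstacle: the averaging ansatz is engineered so that the cross-term cancels to produce condition (a), and the remaining verifications are short and formal.
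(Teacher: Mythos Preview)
Your proof is correct and coincides with the paper's argument up to phrasing: the paper decomposes a Riemann form $\psi_{0}$ into eigenvectors $\psi_{+}+\psi_{-}$ for the action of $\beta^{\ast}$ on $H^{2}(A,\mathbb{Q})$ (eigenvalues $\pm b$) and takes $\psi_{+}$, while your averaged form is precisely $2\psi_{+}=\psi_{0}+b^{-1}\beta^{\ast}\psi_{0}$. The verifications you outline---positivity of each summand via $\beta_{0}$ commuting with $J$ and being invertible, and the twisting identity reducing to $\beta_{0}^{2}=-b$---are exactly what justifies the paper's terse claim that $\psi_{+}$ is again a Riemann form satisfying $\psi_{+}(\beta x,y)=\psi_{+}(x,\bar\beta y)$.
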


\begin{proof}
Let $\psi$ be a Riemann form for $A$, i.e., an element $\psi$ of
$\Hom(\bigwedge_{\mathbb{Q}}^{2}V,\mathbb{Q}{})\simeq H^{2}(A,\mathbb{Q})$
such that $\psi_{\mathbb{R}{}}(Jx,Jy)=\psi_{\mathbb{R}}(x,y)$ and
$\psi_{\mathbb{R}}(x,Jx)>0$ for $x,y\in V_{\mathbb{R}}$, $x>0$. Then $\beta$
acts on $B^{1}(A)(-1)\subset H^{2}(A,\mathbb{Q})$ with eigenvalues in
$\{d,-d\}$. Let $\psi=\psi_{+}+\psi_{-}$ be the decomposition of $\psi$ into
eigenvectors. Then $\psi_{+}$ is again a Riemann form, and the condition
$\beta^{\ast}\psi_{+}=d\psi_{+}$ implies that $\psi_{+}(\beta x,y)=\psi
_{+}(x,\bar{\beta}y)$, as required.
\end{proof}

Let $\lambda$ be a polarization of $(A,Q)$, and
\[
\psi\colon H_{1}(A,\mathbb{Q}{})\times H_{1}(A,\mathbb{Q}{})\rightarrow
\mathbb{Q}{}%
\]
its Riemann form. Define $\phi\colon H_{1}(A,\mathbb{Q})\times H_{1}%
(A,\mathbb{Q})\rightarrow Q$ by%
\[
\phi(x,y)=\psi(x,\beta y)+\beta\psi(x,y).
\]
Then $\phi$ is the unique hermitian form on the $Q$-vector space
$H_{1}(A,\mathbb{Q})$ such that
\[
\phi(x,y)-\overline{\phi(x,y)}=2\beta\psi(x,y).
\]
The discriminant of $\phi$ is an element of $\mathbb{Q}^{\times}%
/\Nm(Q^{\times})$, called the \emph{determinant} of $(A,Q,\lambda)$.

\begin{proposition}
\label{j60} Let $A$ be an abelian variety of dimension $2m$ and $Q$ an
imaginary quadratic subfield of $\End^{\circ}(A)$. The following conditions on
$(A,Q)$ are equivalent:

\begin{enumerate}
\item $\mathrm{Tgt}_{0}(A)$ is a free $Q\otimes_{\mathbb{Q}}\mathbb{C}$-module;

\item the one-dimensional $Q$-vector space\footnote{As before, let
$V=H_{1}(A,\mathbb{Q})$. We can identify $H^{2m}(A,\mathbb{Q})$ with the space
of $\mathbb{Q}$-multilinear alternating forms $V\times\cdots\times
V\rightarrow\mathbb{Q}$ and $W(A,Q)(-m)$ with the subspace of those that are
$Q$-balanced.}
\[
W(A,Q)\overset{\df}{=}\big(\bigwedge\nolimits_{Q}^{2m}H^{1}(A,\mathbb{Q}%
)\big)(m)\subset H^{2m}(A,\mathbb{Q})(m)
\]
consists of Hodge classes.
\end{enumerate}
\end{proposition}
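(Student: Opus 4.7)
The plan is to diagonalize the $Q$-action on $H^{1}(A,\mathbb{C})$ and reduce both conditions to a single statement about Hodge numbers. Write $W=H^{1}(A,\mathbb{Q})$; the inclusion $Q\subset\End^{0}(A)$ makes $W$ a free $Q$-module of rank $2m$. Since $Q$ is imaginary quadratic, $Q\otimes_{\mathbb{Q}}\mathbb{C}\cong\mathbb{C}^{\sigma}\times\mathbb{C}^{\bar{\sigma}}$, so $W_{\mathbb{C}}$ splits as $W_{\sigma}\oplus W_{\bar{\sigma}}$, each summand of $\mathbb{C}$-dimension $2m$. Because $Q$ acts by algebraic endomorphisms, the Hodge decomposition is $Q$-equivariant and refines to
\[
W_{\mathbb{C}}=W^{1,0}_{\sigma}\oplus W^{1,0}_{\bar{\sigma}}\oplus W^{0,1}_{\sigma}\oplus W^{0,1}_{\bar{\sigma}}.
\]
Set $a=\dim_{\mathbb{C}}W^{1,0}_{\sigma}$. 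Complex conjugation on $W_{\mathbb{C}}$ swaps $W^{1,0}\leftrightarrow W^{0,1}$ and, being antilinear over $\mathbb{C}$ while the $Q$-action is defined over $\mathbb{Q}$, also interchanges the two $Q$-eigenspaces $W_{\sigma}\leftrightarrow W_{\bar{\sigma}}$; hence $\dim W^{0,1}_{\bar{\sigma}}=a$ and $\dim W^{1,0}_{\bar{\sigma}}=\dim W^{0,1}_{\sigma}=2m-a$, so the four Hodge numbers are controlled by the single integer $a$.

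For (a), $\mathrm{Tgt}_{0}(A)$ is $\mathbb{C}$-linearly dual to $W^{1,0}$ in a $Q\otimes\mathbb{C}$-equivariant way, so its $\sigma$- and $\bar{\sigma}$-eigenspaces have dimensions $a$ and $2m-a$; freeness over $Q\otimes\mathbb{C}$ thus amounts to $a=m$. For (b), since $Q\otimes\mathbb{C}$ is a product of two fields, cross-wedges between $W_{\sigma}$ and $W_{\bar{\sigma}}$ are annihilated by the orthogonal idempotents, giving
\[
W(A,Q)\otimes_{\mathbb{Q}}\mathbb{C}=\bigwedge\nolimits^{2m}_{\mathbb{C}}(W_{\sigma})\oplus\bigwedge\nolimits^{2m}_{\mathbb{C}}(W_{\bar{\sigma}})
\]
inside $\bigwedge\nolimits^{2m}_{\mathbb{C}}W_{\mathbb{C}}=H^{2m}(A,\mathbb{C})$. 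Since $W_{\sigma}=W^{1,0}_{\sigma}\oplus W^{0,1}_{\sigma}$ has Hodge numbers $(a,2m-a)$, the line $\bigwedge^{2m}_{\mathbb{C}}(W_{\sigma})$ has Hodge type $(a,2m-a)$; similarly the other summand has type $(2m-a,a)$. Therefore $W(A,Q)(m)$ consists of Hodge classes iff both lines lie in $H^{m,m}$ iff $a=m$. Both (a) and (b) thus reduce to $a=m$, proving the equivalence.

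The only step that is not pure bookkeeping is the exterior-algebra identity for $W(A,Q)\otimes\mathbb{C}$: it requires observing that over the product ring $Q\otimes\mathbb{C}$ the top exterior power of $W_{\sigma}\oplus W_{\bar{\sigma}}$ splits as the direct sum of the two top $\mathbb{C}$-exterior powers, because the mixed wedges are killed by the idempotents $e_{\sigma},e_{\bar{\sigma}}$. I expect this to be the main technical point, but it is a short lemma about exterior algebra over idempotent-split rings; the remainder is Hodge-number accounting together with the symmetry imposed by complex conjugation, and I foresee no further obstacle.
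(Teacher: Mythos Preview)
Your argument is correct and is precisely the computation the paper has in mind: the paper's own proof consists of the single line ``Straightforward; see Deligne 1982, 4.4,'' and what you have written is exactly that straightforward Hodge-number bookkeeping (decompose $H^{1}(A,\mathbb{C})$ into $Q\otimes\mathbb{C}$-eigenspaces, track the type $(a,2m-a)$, and observe that both (a) and (b) amount to $a=m$). The exterior-algebra identity you single out is indeed the one nontrivial point, and your justification via the idempotent splitting of $Q\otimes\mathbb{C}$ is the standard one.
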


\begin{proof}
Straightforward; see Deligne 1982, 4.4.
\end{proof}

\noindent A pair $(A,Q)$ satisfying the equivalent conditions of the
proposition is said to be of \emph{Weil type}. The elements of $W(A,Q)$%
\textrm{ are }the \emph{Weil classes} on $A$. The hermitian form $\phi$
attached to a polarization $\lambda$ of $(A,Q)$ has signature $(m,m)$, and so
\[
(-1)^{m}\cdot\det(A,Q,\lambda)>0.
\]

\begin{proposition}
For any imaginary quadratic field $Q$, integer $m\geq1$, and element
$a\in\mathbb{Q}^{\times}/\Nm(Q^{\times})$ with sign $(-1)^{m}$, the
$2m$-dimensional polarized abelian varieties $(A,Q,\lambda)$ of Weil type with
determinant $a$ form an $m^{2}$-dimensional family.
\end{proposition}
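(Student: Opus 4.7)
The plan is to translate the moduli problem into one about Hodge structures on a fixed hermitian space and then exhibit the parameter space as an open subset of a Grassmannian. Fix a $2m$-dimensional $Q$-vector space $V$ and a $Q$-hermitian form $\phi$ on $V$ of signature $(m,m)$ and discriminant $a$; such a pair $(V,\phi)$ exists precisely because $(-1)^m a>0$, and is unique up to isometry by the classification of hermitian forms over imaginary quadratic fields. Set $\psi(x,y) = \tfrac{1}{2\beta}(\phi(x,y)-\overline{\phi(x,y)})$, an alternating $\mathbb{Q}$-bilinear form on $V$ satisfying $\psi(\beta x,y)+\psi(x,\beta y)=0$. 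After fixing a $\mathbb{Z}$-lattice in $V$, polarized triples $(A,Q,\lambda)$ of Weil type with determinant $a$ correspond to polarizable Hodge structures of type $\{(-1,0),(0,-1)\}$ on $V$ for which the $Q$-action is by Hodge morphisms, $\psi$ is a Riemann form, and the Weil condition of Proposition \ref{j60}(a) holds.

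Next I would decompose $V_\mathbb{C} = V_\sigma \oplus V_{\bar\sigma}$ under the action of $Q\otimes\mathbb{C}\cong\mathbb{C}\times\mathbb{C}$, where $\sigma,\bar\sigma$ are the two embeddings of $Q$ into $\mathbb{C}$; each summand has complex dimension $2m$, and complex conjugation on $V_\mathbb{C}$ interchanges them. Any $Q$-compatible Hodge decomposition $V_\mathbb{C}=V^{-1,0}\oplus V^{0,-1}$ respects this splitting, so $V^{-1,0}=V_\sigma^{-1,0}\oplus V_{\bar\sigma}^{-1,0}$, and the freeness of $\Tgt_0(A)\simeq V^{-1,0}$ over $Q\otimes\mathbb{C}$ forces $\dim V_\sigma^{-1,0}=\dim V_{\bar\sigma}^{-1,0}=m$. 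A direct calculation using $\psi(\beta x,y)=-\psi(x,\beta y)$ together with the fact that $\beta$ acts as $\pm\sigma(\beta)$ on the two summands shows that $\psi_\mathbb{C}$ vanishes on $V_\sigma\times V_\sigma$ and on $V_{\bar\sigma}\times V_{\bar\sigma}$; hence $\psi_\mathbb{C}$ induces a perfect pairing between $V_\sigma$ and $V_{\bar\sigma}$.

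The first Riemann bilinear relation $\psi_\mathbb{C}(V^{-1,0},V^{-1,0})=0$ then forces $V_{\bar\sigma}^{-1,0}$ to be the annihilator of $V_\sigma^{-1,0}$ under this pairing, and since both have dimension $m$ this determines $V_{\bar\sigma}^{-1,0}$ uniquely from $V_\sigma^{-1,0}$; the piece $V^{0,-1}$ is then automatically the complex conjugate of $V^{-1,0}$, because $\psi$ is defined over $\mathbb{Q}$. Thus the Hodge structure is completely specified by the single choice $V_\sigma^{-1,0}\in\Gr(m,V_\sigma)=\Gr(m,2m)$. The second Riemann relation (positivity of $\psi(x,Jx)$) cuts out an open subset of this Grassmannian, and since $\Gr(m,2m)$ has complex dimension $m(2m-m)=m^2$, the family of such triples has the stated dimension $m^2$. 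The main point to verify with care is that this open positivity locus is non-empty for every admissible $a$; this reduces to the existence of a maximal $\phi$-positive $\mathbb{C}$-subspace of $V_\sigma$, which in turn follows from Witt's theorem applied to the hermitian form of signature $(m,m)$.
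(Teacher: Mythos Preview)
Your argument is correct and is essentially an explicit working-out of what the paper's proof only cites: the paper simply asserts that the underlying variety of the family is the connected Shimura variety attached to $\SU(\phi)$ and refers to \cite{weil1977}, \cite{deligne1982}, 4.8, and \cite{geemen1994}, whereas you construct the period domain directly as an open subset of $\Gr(m,V_\sigma)$. The two descriptions coincide, since the hermitian symmetric space $\SU(m,m)/S(U(m)\times U(m))$ is precisely your open locus of $m$-dimensional subspaces satisfying the positivity condition, and both have complex dimension $m^{2}$.

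Two small remarks. First, the appeal to Witt's theorem at the end is unnecessary: once you know $\phi$ has signature $(m,m)$, the existence of an $m$-dimensional positive-definite subspace of $V_\sigma\cong V_{\mathbb{R}}$ is immediate from the definition of signature. Second, strictly speaking the \emph{family} in the statement is the quotient of your open set by an arithmetic subgroup of $\SU(\phi)(\mathbb{Q})$ preserving the lattice (i.e., the Shimura variety), not the period domain itself; this does not affect the dimension count, but it is worth saying explicitly if you want a genuine moduli space of polarized abelian varieties rather than of Hodge structures on a fixed lattice.
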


\begin{proof}
The underlying variety of the family is the connected Shimura variety attached
to $\SU(\phi)$. See \cite{weil1977}, \cite{deligne1982}, 4.8, or
\cite{geemen1994}.
\end{proof}

Weil showed that, in general, the Weil classes are exotic, and suggested that
they formed a good test case for the Hodge conjecture. The first interesting
case is $\dim(A)=4$. Concerning this, there is the following result.

\begin{theorem}
[Markman]\label{j14}Let $(A,Q,\lambda)$ be a polarized abelian fourfold of
Weil type with determinant $1$ in $\mathbb{Q}^{\times}/\Nm(Q^{\times})$. Then
the Weil classes on $A$ are algebraic.
\end{theorem}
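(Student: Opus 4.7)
The plan is to relate the moduli of polarized abelian fourfolds of Weil type with trivial determinant to a family of hyperkähler manifolds whose period map aligns with the sub-variation carrying the Weil classes, and then to realize those Weil classes as images of algebraic cycles on the hyperkähler side under an explicit algebraic correspondence.

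First I would analyse the moduli-theoretic side. By the preceding proposition, the polarized abelian fourfolds $(A,Q,\lambda)$ of Weil type with fixed determinant form a $4$-dimensional connected Shimura variety attached to $\SU(\phi)$, where $\phi$ is a hermitian form of signature $(2,2)$ on a $4$-dimensional $Q$-vector space. The exceptional isomorphism $\SU(2,2)\simeq\Spin(4,2)$ exhibits this Shimura variety as also of orthogonal type, and the sub-variation $W(A,Q)\subset H^{4}(A,\mathbb{Q}{})(2)$ is of K3 type with transcendental lattice of signature $(2,4)$. The determinant $=1$ hypothesis fixes a connected component of the moduli and pins down the discriminant form of this transcendental lattice.

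The crux is to realize this K3-type VHS geometrically. I would look for a family of hyperkähler manifolds $\mathcal{X}\to S$ whose Beauville--Bogomolov lattice has signature $(3,4)$ and whose generic transcendental factor matches ours; the natural candidates are generalized Kummer fourfolds $K_{2}(B)$, or O'Grady's $6$-dimensional hyperkähler manifolds restricted to an appropriate sub-locus. Via the global Torelli theorem for hyperkähler manifolds, a period-map argument should identify an open subset of the moduli of such $\mathcal{X}$ with our Shimura variety (possibly after a finite étale cover), producing on each fibre a Hodge isometry between a primitive piece of $H^{2}(\mathcal{X}_{s})(1)$ and $W(A_{s},Q)$.

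Given this Hodge-theoretic match, I would promote it to an algebraic correspondence $Z\subset\mathcal{X}\times_{S}A^{N}$ coming from the ambient geometry --- a universal sheaf, an Abel--Jacobi-type map, or the structure of a Lagrangian fibration --- so that $[Z]_{\ast}$ realizes the Hodge isometry above up to Tate twist. Then the Weil classes become images under $[Z]_{\ast}$ of algebraic classes on $\mathcal{X}_{s}$ (divisor classes, Chern classes of tautological bundles, or classes of Lagrangian subvarieties), and are therefore algebraic. The main obstacle is constructing the correspondence $Z$ explicitly with the correct cohomological behaviour, and ensuring that enough algebraic cycles exist on the hyperkähler side to span the image under $[Z]_{\ast}$ of the one-dimensional $Q$-space $W(A_{s},Q)$; this Hodge-geometric matching is the technical core of Markman's argument.
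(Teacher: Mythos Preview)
The paper does not prove this theorem; its entire proof reads ``See \cite{markman2021}, 1.5. The proof uses methods from the deformation theory of hyperk\"ahler manifolds.'' Your sketch is therefore consistent with everything the paper asserts, and your starting point --- match the Shimura variety for $\SU(\phi)$ of signature $(2,2)$ to an orthogonal-type period domain via the exceptional isogeny $\SU(2,2)\sim\Spin(4,2)$, and then look for a hyperk\"ahler family realizing that K3-type variation --- is indeed the Hodge-theoretic setup underlying Markman's approach.

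The gap in your outline is precisely the step you flag as ``the main obstacle'': you have no candidate for the correspondence $Z\subset\mathcal{X}\times_{S}A^{N}$, and a bare Hodge isometry between $W(A,Q)$ and a piece of $H^{2}(\mathcal{X}_{s})$ does not produce one. Markman's mechanism is different in shape from what you propose. He realizes each $(A,Q,\lambda)$ of determinant $1$ as isogenous to the third intermediate Jacobian of a hyperk\"ahler manifold of generalized Kummer deformation type, and then constructs a modular (hyperholomorphic) sheaf whose Chern character, transported to that intermediate Jacobian, lands in the Weil space. The deformation-theoretic input is showing that this sheaf deforms with the hyperk\"ahler manifold over the full moduli, so that the construction reaches every abelian fourfold in the family rather than only the special ones. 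Your heuristic of a Lagrangian fibration or Abel--Jacobi map points in a related direction but does not by itself supply such a sheaf, and the candidates $K_{2}(B)$ or the O'Grady sixfold are in the right neighbourhood but not quite the input Markman uses.
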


\begin{proof}
See \cite{markman2021}, 1.5. The proof uses methods from the deformation
theory of hyperk\"ahler manifolds.
\end{proof}

\begin{proposition}
\label{j7}Let $(A\times B,Q)$ be of Weil type, where $A$ is an abelian
$n$-fold and $B$ an elliptic curve. Then there exists a polarization $\lambda$
of $(A\times B,Q)$ with determinant $(-1)^{(n+1)/2}$ modulo $\Nm(Q^{\times})$.
\end{proposition}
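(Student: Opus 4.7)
The plan is to take a product polarization $\lambda_A \times \lambda_B$ on $A \times B$ and rescale $\lambda_B$ to adjust the determinant; this works because $H_1(B,\mathbb{Q})$ is one-dimensional over $Q$, so rescaling $\lambda_B$ by a positive rational $r$ multiplies the total determinant by exactly $r$.

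I would begin by showing that $Q$ acts individually on $A$ and on $B$. Since $B$ has dimension $1$, in the generic case $A$ and $B$ are non-isogenous and $\End^0(A \times B) = \End^0(A) \times \End^0(B)$; the field embedding $Q \hookrightarrow \End^0(A\times B)$ then projects to embeddings $Q \hookrightarrow \End^0(A)$ and $Q \hookrightarrow \End^0(B)$ (the projections are embeddings because $Q$ is a field and both send the identity to the identity), and the edge case $n=1$ reduces to the same picture after an isogeny. Lemma \ref{j75} then produces polarizations $\lambda_A$ of $(A,Q)$ and $\lambda_B$ of $(B,Q)$, and their product $\lambda_0 = \lambda_A \times \lambda_B$ is a polarization of $(A\times B,Q)$ whose Riemann form is the orthogonal sum; hence the associated hermitian form decomposes as $\phi_0 = \phi_A \oplus \phi_B$ on $H_1(A,\mathbb{Q}) \oplus H_1(B,\mathbb{Q})$ and
\[
\det(\phi_0) = \det(\phi_A) \cdot \det(\phi_B) \in \mathbb{Q}^\times/\Nm(Q^\times).
\]
Because $(A\times B, Q)$ is of Weil type, $\phi_0$ has signature $(m,m)$ with $m = (n+1)/2$, so $\det(\phi_0)$ has sign $(-1)^{(n+1)/2}$.

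For the final adjustment, observe that for any $r \in \mathbb{Q}^\times_{>0}$ the endomorphism $r\cdot\id_B \in \End^0(B)$ is Rosati-fixed, positive, and commutes with $Q$, so $\lambda_A \times r\lambda_B$ is again a polarization of $(A\times B,Q)$, with hermitian form $\phi_A \oplus r\phi_B$. Since $H_1(B,\mathbb{Q})$ has $Q$-dimension one, scaling $\phi_B$ by $r$ multiplies $\det(\phi_B)$ (and hence the total determinant) by exactly $r$, yielding $r\cdot\det(\phi_0)$. Because $\Nm(Q^\times) \subset \mathbb{Q}^\times_{>0}$ (as $Q$ is imaginary quadratic), the class $(-1)^{(n+1)/2} \cdot \det(\phi_0)^{-1}$ in $\mathbb{Q}^\times/\Nm(Q^\times)$ has trivial sign and is therefore represented by a positive rational; taking $r$ to be such a representative gives a polarization $\lambda := \lambda_A \times r\lambda_B$ with determinant $(-1)^{(n+1)/2}$ modulo $\Nm(Q^\times)$. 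The only delicate step is the first, namely verifying that the $Q$-action descends to each factor so that Lemma \ref{j75} can be applied separately; once that is established, everything reduces to scaling in the one-dimensional hermitian space $H_1(B,\mathbb{Q})$, where the determinant class can be shifted by any positive rational.
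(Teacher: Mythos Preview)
Your argument is correct and follows the same route as the paper: take a product polarization $\lambda_A\times\lambda_B$, note that the hermitian form is $\phi_A\oplus\phi_B$, and then rescale the one-dimensional piece $\phi_B$ by a positive rational (the paper takes a positive integer, which is equivalent modulo norms since $q^2=\Nm_{Q/\mathbb{Q}}(q)$) to force the determinant into the desired class. The only addition you make is the opening paragraph on why $Q$ acts on $A$ and $B$ separately; the paper simply assumes this when it writes ``let $\lambda_A$ (resp.\ $\lambda_B$) be a polarization of $(A,Q)$ (resp.\ $(B,Q)$)'', consistent with how the proposition is actually invoked later (e.g.\ in \ref{j72}, where $Q$ acts diagonally by construction). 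Your justification of this point is reasonable but not airtight as stated: the case where $B$ occurs as an isogeny factor of $A$ (for $n>1$) is not the ``edge case $n=1$'' and would need a word, though it causes no real difficulty.
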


\begin{proof}
Let $\lambda_{A}$ (resp.~$\lambda_{B}$) be a polarization of $(A,Q)$
(resp.~$(B,Q)$). Let $\phi_{A}$ and $\phi_{B}$ be the corresponding $Q$-valued
hermitian forms. Then $\lambda_{A}\times\lambda_{B}$ is a polarization of
$(A\times B,Q)$ with hermitian form $\phi_{A}\oplus\phi_{B}$, which has
determinant some $a\in\mathbb{Q}{}^{\times}/\Nm(Q^{\times})$ with
$(-1)^{(n+1)/2}a>0$. There exists a $c\in\mathbb{Z}$, $c>0$, such that
$ac=(-1)^{(n+1)/2}$ in $\mathbb{Q}^{\times}/\Nm(Q^{\times})$. Now $\lambda
_{A}\times c\lambda_{B}$ is a polarization of $(A\times B,Q)$ with determinant
$(-1)^{(n+1)/2}$ modulo $\Nm(Q^{\times})$.
\end{proof}

\begin{aside}
\label{j76}In the above discussion, $Q$ can be replaced by any CM field. If
all Weil classes in this more general sense are algebraic, then the Hodge
conjecture holds for all CM abelian varieties (\cite{deligne1982}, \S 5).
\end{aside}

\subsection{Almost-neat abelian varieties}

Let $(A,Q)$ be of Weil type. Then $S(A)$ acts on $W(A,Q)$ through a
\textquotedblleft determinant\textquotedblright\ homomorphism $\rho\colon
S(A)\rightarrow U_{Q}$, and $\Hg(A)$ is contained in the kernel of $\rho$. We
say that the pair $(A,Q)$ is \emph{almost-neat} if the sequence%
\begin{equation}
1\rightarrow\Hg(A)\longrightarrow S(A)\overset{\rho}{\longrightarrow}%
U_{Q}\rightarrow1 \label{j66}%
\end{equation}
is exact. As $W(A,Q)$ has weight $0$ and $L(A)=w(\mathbb{G}_{m})\cdot S(A)$,
it also acts on $W(A,Q)$ through a homomorphism $\rho\colon L(A)\rightarrow
U_{Q}$. A pair $(A,Q)$ of Weil type is almost-neat if and only if the sequence%
\begin{equation}
1\rightarrow\MT(A)\rightarrow L(A)\overset{\rho}{\longrightarrow}%
U_{Q}\rightarrow1 \label{j67}%
\end{equation}
is exact. If $(A,Q)$ is almost-neat, then the Weil classes on $A$ are exotic.

\begin{theorem}
\label{j63}If $(A,Q)$ is almost-neat and the Weil classes on $A$ are
algebraic, then the Hodge conjecture holds for $A$ and its powers.
\end{theorem}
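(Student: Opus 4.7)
The plan is to prove $M(A) = \MT(A)$, from which the Hodge conjecture for the powers of $A$ follows at once: by the discussion preceding Proposition~\ref{j65} (which uses the tannakian nature of abelian motives modulo numerical equivalence), the algebraic classes on powers of $A$ are exactly the cohomology classes fixed by $M(A)$, and the Hodge classes are exactly those fixed by $\MT(A)$. Since every algebraic class is a Hodge class, the inclusion $\MT(A) \subset M(A)$ holds automatically, and the task reduces to establishing the reverse inclusion $M(A) \subset \MT(A)$.

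For this, the almost-neatness hypothesis supplies the exact sequence (\ref{j67}), which identifies $\MT(A)$ with $\ker(\rho)$. So it is enough to show that $\rho$ kills $M(A)$, equivalently that $M(A)$ acts trivially on the line of Weil classes $W(A,Q) \subset H^{2m}(A,\mathbb{Q})(m)$. By construction, the action of $L(A)$ on $W(A,Q)$ factors through $\rho$, and the induced action of $U_Q$ on the one-dimensional $Q$-vector space $W(A,Q)$ is multiplication by the tautological character, hence faithful; so ``acts trivially on $W(A,Q)$'' is the same as ``lies in $\ker(\rho)$''.

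The hypothesis that the Weil classes on $A$ are algebraic now says precisely that every element of $W(A,Q)$ belongs to the algebra of algebraic classes; since $M(A)$ is defined as the subgroup of $L(A)$ fixing all algebraic classes on all powers of $A$, it fixes $W(A,Q)$ pointwise. The previous paragraph then gives $M(A) \subset \ker(\rho) = \MT(A)$, which combined with the automatic inclusion yields $M(A) = \MT(A)$, completing the proof.

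I do not expect any substantive obstacle: once one unpacks how $L(A)$ acts on $W(A,Q)$ through $\rho$ and invokes the almost-neatness sequence, the argument is a one-line tannakian manipulation. The only point requiring a moment's verification is the faithfulness of the $U_Q$-action on $W(A,Q)$, which is immediate from $\dim_Q W(A,Q) = 1$ and the fact that $U_Q \subset Q^\times$ acts by scalar multiplication.
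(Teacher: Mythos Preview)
Your proof is correct and follows essentially the same approach as the paper: the paper's proof is the one-line observation that algebraicity of the Weil classes forces $M(A)\subset\ker(\rho)=\MT(A)$, whence $M(A)=\MT(A)$. Your version simply unpacks this with more detail (the automatic inclusion $\MT(A)\subset M(A)$, and the faithfulness of the $U_Q$-action on $W(A,Q)$).
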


\begin{proof}
As the Weil classes on $A$ are algebraic, $M(A)$ is contained in the kernel of
$\rho$, and so equals $\MT(A)$. Thus the Hodge classes on the powers of $A$
coincide with the algebraic classes.
\end{proof}

Recall that that $W(A,Q)$ has dimension $1$ as a $Q$-vector space. If $W(A,Q)$
contains a single nonzero algebraic class, then it consists of algebraic
classes because the action of the endomorphisms of $A$ on its cohomology
groups preserves algebraic classes.

\begin{lemma}
\label{j58}Let $E$ be a CM field and $Q$ an imaginary quadratic subfield of
$E$. Then%
\[
U_{E/Q}\overset{\df}{=}\Ker(\Nm_{E/Q}\colon U_{E}\rightarrow U_{Q})
\]
is a subtorus of $U_{E}$ of codimension $1$, and every subtorus of $U_{E}$ of
codimension $1$ is of this form for a unique imaginary quadratic subfield of
$E$.
\end{lemma}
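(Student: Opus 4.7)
The plan is to translate the statement to character lattices, exploiting that $U_E$ is anisotropic and analyzing the Galois structure of $X^*(U_E)$.

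Set $\Sigma = \Hom(E, \mathbb{Q}^{\mathrm{al}})$, with $\Gamma := \Gal(\mathbb{Q}^{\mathrm{al}}/\mathbb{Q})$ acting by post-composition and an involution $\iota$ coming from complex conjugation (well-defined on $\Sigma$ via the CM-compatibility $\sigma \circ \iota_E = \iota \circ \sigma$). Then $X^*(U_E) = \mathbb{Z}^\Sigma/\langle e_\sigma + e_{\iota\sigma}\rangle$ as $\Gamma$-modules. For any imaginary quadratic $Q \subseteq E$, the norm $\Nm_{E/Q}\colon E \to Q$ commutes with complex conjugation and restricts to a map $U_E \to U_Q$; on characters it sends a generator of $X^*(U_Q)$ to $v_Q := \sum_{\sigma|_Q = \tau}[\sigma]$, which is primitive in $X^*(U_E)$ (choose a CM type $\Phi$ containing $\{\sigma : \sigma|_Q = \tau\}$ and observe that the coordinates lie in $\{0,1\}$). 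Hence $\Nm_{E/Q}$ is surjective with kernel $U_{E/Q}$ a codimension-$1$ subtorus, proving the first assertion.

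For the converse, I would use that $U_E(\mathbb{R}) \cong (S^1)^{[E:\mathbb{Q}]/2}$ is compact, so $U_E$ is anisotropic, and so is any quotient. The $1$-dimensional anisotropic tori over $\mathbb{Q}$ are exactly the $U_K$ for $K$ imaginary quadratic: any $1$-dim torus has character group $\mathbb{Z}$ with $\Gamma$-action through $\{\pm 1\}$, giving $\mathbb{G}_m$ or the norm-one torus of a quadratic extension, and only the imaginary quadratic case yields compact real points. Hence a codimension-$1$ subtorus $T' \subseteq U_E$ produces a quotient $U_E/T' \cong U_Q$ for a unique imaginary quadratic $Q \subseteq \mathbb{Q}^{\mathrm{al}}$ (determined by the quadratic character $\chi_Q$ of $\Gamma$).

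The main step is then to identify $Q$ as a subfield of $E$ and $T'$ as $U_{E/Q}$. The surjection $U_E \twoheadrightarrow U_Q$ is dual to a nonzero $\Gamma$-equivariant embedding $X^*(U_Q) \hookrightarrow X^*(U_E)$, i.e.\ a nonzero $v \in X^*(U_E)$ satisfying $\gamma v = \chi_Q(\gamma) v$. Writing $v$ as a function $\Sigma \to \mathbb{Q}$ with $v(\gamma\sigma) = \chi_Q(\gamma)v(\sigma)$ and using the transitivity of $\Gamma$ on $\Sigma$, the value $v(\sigma_0)$ determines $v$, while consistency forces $\chi_Q$ to vanish on $\Gal(\mathbb{Q}^{\mathrm{al}}/\sigma_0(E))$; by the Galois correspondence applied to the Galois extension $Q/\mathbb{Q}$, this is equivalent to $Q \subseteq \sigma_0(E)$, i.e.\ to $Q$ embedding as a subfield of $E$. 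In that case the space of admissible $v$ is $1$-dimensional, so up to scalar the surjection coincides with $\Nm_{E/Q}$, giving $T' = U_{E/Q}$. The only real subtlety is keeping track of the two complex conjugations (on $E$ and on $\mathbb{Q}^{\mathrm{al}}$), which is handled uniformly by the CM compatibility noted above.
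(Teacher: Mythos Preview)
Your proof is correct and follows essentially the same approach as the paper's: both reduce to character lattices and identify the one-dimensional quotient via its splitting field. The paper's proof is a two-line sketch (``can be checked on the character groups''; ``its splitting field $Q$ is an imaginary quadratic extension''), whereas you have carried out the details---in particular the anisotropy argument and the verification that $Q$ actually embeds in $E$---that the paper leaves implicit.
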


\begin{proof}
That $U_{E/Q}$ is a subtorus of codimension $1$ can be checked on the
character groups. Conversely, if $U$ is a one-dimensional quotient of $U_{E}$,
then its splitting field $Q$ is an imaginary quadratic extension of
$\mathbb{Q}{}$, and $U=U_{Q}$. That the kernel of $U_{E}\rightarrow U$ is
isomorphic to $U_{E/Q}$ can be checked on the character groups.\footnote{This
lemma is well known.}
\end{proof}

\begin{proposition}
\label{j72}Let $A$ be a neat simple abelian variety and $B$ a CM elliptic
curve such that $A\times B$ is not neat.

(a) There is an exact sequence%
\[
1\rightarrow\Hg(A\times B)\rightarrow S(A\times B)\rightarrow U\rightarrow1
\]
in which the projection $S(A)\rightarrow U$ is surjective and $S(B)\rightarrow
U$ an isomorphism.

(b) There exists an embedding of $Q\overset{\df}{=}\End^{0}(B)$ into
$\End^{0}(A)$ and an $m\in\mathbb{N}{}$ such that $(A\times B^{m},Q)$ is of
Weil type (hence almost-neat${}$).
\end{proposition}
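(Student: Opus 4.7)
The plan is to prove~(a) via Goursat's lemma applied to the subproduct decomposition of $\Hg(A\times B)$, and then to use the resulting exact sequence to derive~(b).

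For~(a), first note that $A$ is not isogenous to $B$: if $\dim A=1$ then $A\times B$ would be a product of elliptic curves and hence neat, contrary to hypothesis. The product formulas of \S 1.b then give $S(A\times B)=S(A)\times S(B)$, and $\Hg(A\times B)$ sits inside $\Hg(A)\times\Hg(B)$ as a subproduct; neatness of $A$ and $B$ identifies the Hodge groups with the corresponding special Lefschetz groups, so $\Hg(A\times B)$ is a subproduct of $S(A\times B)$. By Goursat, the quotient $U:=S(A\times B)/\Hg(A\times B)$ is a common quotient of $S(A)$ and $S(B)=U_Q$; non-neatness of $A\times B$ forces $U\neq1$. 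Since $U_Q$ is a connected one-dimensional $\mathbb{Q}$-anisotropic torus with no proper nontrivial connected subgroups, the projection $S(B)\to U$ is an isomorphism, and $S(A)\to U$ is the required surjection.

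For~(b), the central step and main obstacle is producing the embedding $Q\hookrightarrow\End^{0}(A)$. The surjection $\Hg(A)\twoheadrightarrow U_Q$ from~(a) gives a nontrivial abelian quotient of $\Hg(A)$; running through the Albert-type classification of neat simple abelian varieties rules out the totally real, quaternion, and symplectic types (where $\Hg(A)$ is semisimple and admits no abelian quotient), leaving only the cases in which $\End^{0}(A)$ already contains a CM field. Writing $E$ for the CM subfield of the centre of $\End^{0}(A)$ accounting for this quotient, $\Hg(A)$ lies in $U_E$; the surjection $\Hg(A)\twoheadrightarrow U_Q$ then lifts, up to isogeny, to a surjection $U_E\twoheadrightarrow U_Q$ (the Galois-equivariant lift exists because $X^{\ast}(U_Q)\cong\mathbb{Z}$ has Galois acting by $\pm1$, so any preimage can be made alternating). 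Lemma~\ref{j58} identifies this with the norm map attached to a unique imaginary quadratic subfield $Q'\subset E$, and $Q'\cong Q$ because the splitting field determines a one-dimensional $\mathbb{Q}$-anisotropic torus. This gives the desired embedding $Q\hookrightarrow E\subset\End^{0}(A)$.

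The remaining steps are a dimension count and a formal identification. With $Q\hookrightarrow\End^{0}(A)$, the tangent space $\Tgt_{0}(A)$ becomes a $Q\otimes_{\mathbb{Q}}\mathbb{C}$-module whose two eigenspaces have dimensions $p,q$ with $p+q=\dim A$; after possibly replacing the embedding by its Galois conjugate (or swapping the CM type of $B$), we may assume $p\le q$, and then $m:=q-p\in\mathbb{N}$ balances the eigenspaces on $\Tgt_{0}(A\times B^m)$. By Proposition~\ref{j60} the pair $(A\times B^m,Q)$ is of Weil type. Almost-neatness then follows from~(a): $S(A\times B^m)=S(A\times B)$ and $\Hg(A\times B^m)=\Hg(A\times B)$ (extra copies of $B$ enlarge neither group), so the exact sequence from~(a) is precisely (\ref{j66}), once one checks that the Goursat quotient map agrees with the determinant action $\rho$ on $W(A\times B^m,Q)$ --- both are surjections $S(A\times B^m)\twoheadrightarrow U_Q$ pinned down by their restrictions to $S(B)=U_Q$, hence agree up to an automorphism of $U_Q$.
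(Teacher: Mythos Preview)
Your argument for (a) via Goursat's lemma is essentially the paper's argument reorganized through the lemma rather than by hand; in particular, Goursat gives you the surjectivity of $S(A)\to U$ for free, whereas the paper argues this separately. One small gap: saying that $U_Q$ has no proper nontrivial \emph{connected} subgroups only shows that the kernel of $S(B)\to U$ is finite, not that it is trivial. The paper invokes connectedness of $\Hg(A\times B)$ at this point; you should say why the kernel cannot be a nontrivial finite subgroup of $U_Q$.

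In (b) there is a genuine error. You write that ``$\Hg(A)$ lies in $U_E$'', but this is false unless $A$ happens to be CM. For a simple neat $A$ of type~IV with centre $E$, the group $\Hg(A)=S(A)$ is a (generally non-abelian) unitary-type group whose \emph{centre} is $U_E$; it is not a subgroup of $U_E$. Your subsequent ``lifting, up to isogeny'' argument therefore has no content: there is nothing to lift, and the character-theoretic remark about $X^{\ast}(U_Q)$ does not repair this. The paper's route is the correct one: once you know $A$ is of type~IV, the centre of $C(A)$ equals the centre $E$ of $\End^{0}(A)$, so $U_E$ sits inside $S(A)$ as the (identity component of the) centre. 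Since $S(A)$ is reductive and $U$ is a torus, the surjection $S(A)\to U$ is already surjective when restricted to $U_E$ (equivalently, it factors through the cocenter, onto which the centre surjects). This is what produces the surjection $U_E\to U$ needed to invoke Lemma~\ref{j58} and embed $Q$ into $E$. The remainder of your argument for (b) --- the dimension count producing $m$, the Weil-type verification via Proposition~\ref{j60}, and the identification of the exact sequence of (a) with (\ref{j66}) --- is correct and matches the paper.
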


\begin{proof}
Because $A$ is neat but $A\times B$ is not,%
\[
S(A)\times S(B)=S(A\times B)\supsetneqq\Hg(A\times B)\twoheadrightarrow
\Hg(A)=S(A).
\]
It follows that $S(A)\times S(B)=\Hg(A\times B)\cdot S(B)$, and so there is an
exact sequence%
\[
1\rightarrow\Hg(A\times B)\rightarrow S(A)\times S(B)\rightarrow
U\rightarrow1
\]
in which $U$ one-dimensional and the projection $S(B)\rightarrow U$ is
surjective. The kernel of $S(B)\rightarrow U$ is trivial because $\Hg(A\times
B)$ is connected.

The projection $S(A)\rightarrow U$ is surjective, because otherwise
$\Hg(A\times B)=S(A)^{\circ}$, which is not possible because $\Hg(A\times
B)\rightarrow\Hg(B)=S(B)$ is surjective.

It follows that $A$ is of type IV, because otherwise the algebraic group
$S(A)^{\circ}$ would be semisimple (\cite{milne1999lc}, \S 2), and so the
centre of $\End^{0}(A)$ is a CM-field $E$. The centre of $C(A)$ equals that of
$\End^{0}(A)$,\footnote{Let $A$ and $B$ be subalgebras of an algebra, each the
centralizer of the other. Then $A\cap B$ is the common centre of $A$ and $B$.}
and the homomorphism $S(A)\rightarrow U$ induces a surjection $U_{E}%
\rightarrow U$. According to the Lemma \ref{j58}, there is an imaginary
quadratic field $Q$ and an embedding of $Q$ into $E$ such that $U_{E}%
\rightarrow U$ is given by the norm map $E\rightarrow Q$. Clearly
$Q\approx\End^{0}(B)$.

Let $\rho\colon Q\rightarrow\mathbb{C}$ be an embedding, and let the
representation of $Q\otimes\mathbb{C}$ on $\Tgt_{0}(A)$ be $n_{1}\rho\oplus
n_{2}\bar{\rho}$. We may suppose that $n_{1}>n_{2}$ (otherwise replace $\rho$
with $\bar{\rho}$), and let $m=n_{1}-n_{2}$. Choose the isomorphism
$Q\rightarrow\End^{0}(B)$ so that $Q\otimes\mathbb{C}$ acts on $\Tgt_{0}(B)$
as $\bar{\rho}$. When we let $Q$ act diagonally on $A\times B^{m}$, the pair
$(A\times B^{m},Q)$ is of Weil type, and (a) shows that it is almost-neat.
\end{proof}

Let $A$ be an abelian variety over $\mathbb{C}{}$ and $Q$ an imaginary
quadratic subfield of $\End^{0}(A)$. Let $n_{1}\rho\oplus n_{2}\bar{\rho}$ be
the representation of $Q\otimes\mathbb{C}{}$ on $\mathrm{Tgt}_{0}(A)$. If
$Q=\End^{0}(A)$, then $n_{1},n_{2}>0$ (Shimura; see \cite{ribet1983}, p.~525).
If $A$ is CM, then $n_{1},n_{2}>0$ unless $A$ is isogenous to a power of an
elliptic curve.

\begin{corollary}
\label{j74}Let $A$ be a simple abelian threefold and $B$ a CM elliptic curve.
The Hodge conjecture holds for all varieties $A^{r}\times B^{s}$,
$r,s\in\mathbb{N}$.
\end{corollary}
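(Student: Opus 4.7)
The plan is to reduce the problem to an application of Markman's theorem (for $\dim=4$) together with Theorem~\ref{j63}. First, I note that $A$ is neat, being simple of odd prime dimension, and $B$ is neat, being an elliptic curve. If $A\times B$ is also neat, then the Lefschetz theorem gives the Hodge conjecture for $A\times B$ and hence for all its powers, and since $(A\times B)^{k}=A^{k}\times B^{k}$ has $A^{r}\times B^{s}$ as a direct factor for $k=\max(r,s)$, I am done. So I may assume that $A\times B$ is not neat, and then Proposition~\ref{j72} supplies an imaginary quadratic field $Q\cong\End^{0}(B)$ embedding into $\End^{0}(A)$ and an integer $m\geq 1$ such that $(A\times B^{m},Q)$ is almost-neat of Weil type.

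The decisive step, and what I expect to be the main obstacle, is to show $m=1$. Following the proof of Proposition~\ref{j72}, $m=n_{1}-n_{2}$, where $n_{1}\rho\oplus n_{2}\bar{\rho}$ is the representation of $Q\otimes\mathbb{C}$ on $\Tgt_{0}(A)$; since $n_{1}+n_{2}=\dim A=3$, I have $m\in\{1,3\}$ and need to rule out $(n_{1},n_{2})=(3,0)$. That same proof shows that $A$ is of type IV with centre $E\supseteq Q$ a CM field. For a simple threefold, the Albert classification combined with $\dim_{\mathbb{Q}}H_{1}(A,\mathbb{Q})=6$ leaves only two options: either $E=\End^{0}(A)=Q$, or $E=\End^{0}(A)$ is a CM field of degree~$6$ and $A$ is CM by $E$ (the alternative $[\End^{0}(A):\mathbb{Q}]=18$ is incompatible with $\dim_{\mathbb{Q}}H_{1}=6$). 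In the first case, Shimura's theorem (cited just before the corollary) gives $n_{1},n_{2}>0$; in the second, the quoted criterion for CM abelian varieties does so, since a simple threefold is not isogenous to a power of an elliptic curve. Either way, $(n_{1},n_{2})=(2,1)$ and $m=1$.

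With $m=1$ in hand, $(A\times B,Q)$ is a polarized abelian fourfold of Weil type, and Proposition~\ref{j7} furnishes a polarization with determinant $(-1)^{(3+1)/2}=1$ in $\mathbb{Q}^{\times}/\Nm(Q^{\times})$. Markman's theorem (Theorem~\ref{j14}) therefore shows that the Weil classes on $A\times B$ are algebraic, and Theorem~\ref{j63} applied to the almost-neat pair $(A\times B,Q)$ yields the Hodge conjecture for $A\times B$ and all its powers. Descending to $A^{r}\times B^{s}$ as a direct factor of $(A\times B)^{\max(r,s)}$ completes the argument.
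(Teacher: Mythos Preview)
Your proof is correct and follows the same route as the paper: reduce to the non-neat case, invoke Proposition~\ref{j72} to get an almost-neat Weil pair, use Proposition~\ref{j7} to arrange determinant~$1$, apply Markman, and finish with Theorem~\ref{j63}. The only difference is that you spell out in detail why $m=1$ (via the Albert classification and the Shimura/CM facts stated just before the corollary), whereas the paper leaves this implicit, having placed those facts immediately above for exactly this purpose.
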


\begin{proof}
Recall that $A$ and $B$ are neat. If $A\times B$ is neat, then certainly the
Hodge conjecture holds for the varieties $A^{r}\times B^{s}$. Otherwise, there
exists an imaginary quadratic subfield $Q$ of $\End^{0}(A\times B)$ such that
$(A\times B,Q)$ is almost-neat (see \ref{j72}). The pair $(A\times B,Q)$
admits a polarization $\lambda$ such that $\det(A\times B,Q,\lambda)=1$ in
$\mathbb{Q}{}^{\times}/\Nm(Q^{\times})$ (see \ref{j7}), and so the Weil
classes on $A\times B$ are algebraic (\ref{j14}). Therefore the Hodge
conjecture holds for the varieties $A^{r}\times B^{s}$ (see \ref{j63}).
\end{proof}

Let $A$ and $B$ be as in the corollary, but with $\End^{0}(B)=\mathbb{Q}{}$.
If $A$ is not of type IV or is CM, then $A\times B$ is neat (\cite{hazama1989}%
, 0.1, 3.1). In the remaining case, $\End^{0}(A)$ is an imaginary quadratic
field, and it follows from Goursat's lemma that $\Hg(A\times B)=\Hg(A)\times
\Hg(B)$, and so $A\times B$ is again neat.

\begin{example}
\label{j77}Let $E$ be a CM field of degree $2m$, $m\geq3$, over $\mathbb{Q}{}$
containing an imaginary quadratic field $Q$. Choose an embedding $\rho
_{0}\colon Q\rightarrow\mathbb{Q}{}^{\mathrm{al}}$, and let $\{\varphi
_{0},\ldots,\varphi_{m-1}\}$ be the set of extensions of $\rho_{0}$ to $E$.
Then $\Phi_{0}\overset{\df}{=}\{\varphi_{0},\iota\circ\varphi_{1},\ldots
,\iota\circ\varphi_{m-1}\}$ is a CM-type on $E$, and we let $(A,E)$ denote an
abelian variety over $\mathbb{C}{}$ of CM-type $(E,\Phi_{0})$.

Let $(B,Q)$ be an elliptic curve over $\mathbb{C}{}$ of CM-type $(Q,\rho_{0}%
)$, and let $Q$ act diagonally on $A\times B^{m-2}$. Then%
\[
\Tgt_{0}(A\times B^{m-2})\simeq\Tgt_{0}(A)\oplus(m-2)\Tgt_{0}(B)
\]
is a free $Q\otimes\mathbb{C}{}$-module, and so $(A\times B^{m-2},Q)$ is of
Weil type.

The abelian variety $A$ is neat, and the pair $(A\times B^{m-2},Q)$ is
almost-neat (\cite{milne2001}). In particular, if $m=3$, then the Hodge
conjecture holds for the varieties $A^{r}\times B^{s}$, $r,s\in\mathbb{N}{}$.
\end{example}

\begin{remark}
\label{j78}Let $A$ be a neat abelian variety (not necessarily simple) and $B$
a CM elliptic curve such that $A\times B$ is not neat. Then there exists a
simple isogeny factor $A^{\prime}$ of $A$ such $A^{\prime}\times B$ is not
neat, and so Proposition \ref{j72} shows that there exists a homomorphism from
$Q\overset{\df}{=}\End^{0}(B)$ into a direct factor of the centre of
$\End^{0}(A)$.
\end{remark}

\subsection{Hodge classes on general abelian varieties of Weil type}

Let $(A,Q,\lambda)$ be a polarized abelian variety of Weil type, and let
$W(A,Q)$ be the $Q$-vector space of Weil classes on $A$. The action of $S(A)$
on $W(A,Q)$ defines a homomorphism $\rho\colon S(A)\rightarrow U_{Q}$. Let
$\phi\colon V\times V\rightarrow Q$ be the hermitian form attached to
$(A,Q,\lambda)$. There is an exact sequence of algebraic groups%
\begin{equation}
1\rightarrow\SU(\phi)\rightarrow\mathrm{U}(\phi)\overset{\det}{\longrightarrow
}U_{Q}\rightarrow1, \label{je1}%
\end{equation}
where $\mathrm{U}(\phi)$ is the subgroup of $\GL_{Q}(V)$ of elements fixing
$\phi$.

\begin{theorem}
[Weil]\label{j51}If $(A,Q,\lambda)$ is general,\footnote{That is, in the
complement of a countable union of proper closed subvarieties of the moduli
space.} then the sequence%
\[
1\rightarrow\Hg(A)\longrightarrow S(A)\overset{\rho}{\longrightarrow}%
U_{Q}\rightarrow1\text{,}%
\]
coincides with the exact sequence (\ref{je1}). In particular, $(A,Q)$ is
almost neat.
\end{theorem}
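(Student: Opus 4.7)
My plan is to bound $S(A)$ and $\Hg(A)$ from above by natural inclusions that hold for \emph{every} polarized Weil-type pair, and then invoke a Hodge-generic argument on the Shimura variety of $\SU(\phi)$ to upgrade these inclusions to equalities for a general $(A,Q,\lambda)$.

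First I establish the universal inclusions $S(A)\subseteq \mathrm{U}(\phi)$ and $\Hg(A)\subseteq\SU(\phi)$. Any element $\alpha\in S(A)\subseteq L(A)$ lies in the centralizer $C(A)$ of $\End^{0}(A)\supseteq Q$ and satisfies $\alpha\alpha^{\dagger}=1$, where $\dagger$ is the Rosati involution of $\lambda$. Hence $\alpha$ commutes with the $Q$-action and preserves the Riemann form $\psi$, and therefore preserves $\phi(x,y)=\psi(x,\beta y)+\beta\psi(x,y)$, giving $S(A)\subseteq\mathrm{U}(\phi)$. The determinant character $\det\colon\mathrm{U}(\phi)\to U_{Q}$ by construction describes the action on $W(A,Q)=\bigl(\bigwedge\nolimits_{Q}^{2m}H^{1}(A,\mathbb{Q})\bigr)(m)$, and because the Weil classes are Hodge classes the subgroup $\Hg(A)$ fixes $W(A,Q)$ pointwise, whence $\Hg(A)\subseteq\SU(\phi)$.

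The polarized Weil-type pairs with fixed determinant are parametrized by the connected Shimura variety $\Sh$ attached to $\SU(\phi)$ (the moduli statement preceding Theorem \ref{j14}), which is an arithmetic quotient of the Hermitian symmetric domain of signature $(m,m)$. Away from a countable union of proper special subvarieties, a point of $\Sh$ has $\Hg(A)=\SU(\phi)$ by the standard density statement for Shimura varieties. For such a general $(A,Q,\lambda)$, any element of $\End^{0}(A)$ preserves the Hodge structure and hence centralizes $\Hg(A)=\SU(\phi)$; since the $Q$-vector space $V=H_{1}(A,\mathbb{Q})$ of $Q$-dimension $2m$ is an irreducible $\SU(\phi)$-representation, Schur's lemma identifies the commutant of $\SU(\phi)$ in $\End_{\mathbb{Q}}(V)$ with $Q$. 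Thus $\End^{0}(A)=Q$, so $C(A)=\End_{Q}(V)$ and $S(A)=\mathrm{U}(\phi)$. Combining these two equalities shows that the two short exact sequences coincide and that $(A,Q)$ is almost-neat.

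The main obstacle is the density step: one must know that for a Hodge-generic member of the Shimura family the Hodge group is actually all of $\SU(\phi)$, rather than some proper reductive subgroup. This rests on the fact that the strict-shrinkage locus is a countable union of proper Shimura subvarieties and that $\Hg$ can only shrink on them---a foundational but nontrivial input from the theory of variations of Hodge structure that was already available to Weil in this setting.
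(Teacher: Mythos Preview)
Your argument is correct and follows the standard route --- the paper itself gives no self-contained proof but simply cites \cite{weil1977} and \cite{geemen1994}, and the remark immediately following the theorem (that in a family the Mumford--Tate group is constant off a countable union of proper closed subvarieties, where it can only shrink) is exactly the Hodge-genericity input you invoke. Your two-step structure --- first the universal containments $S(A)\subseteq\mathrm{U}(\phi)$ and $\Hg(A)\subseteq\SU(\phi)$, then the upgrade to equalities at a general point via the Shimura-variety description of the moduli space together with a commutant computation --- is precisely what those references do.

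One small refinement worth making explicit: in the Schur step, irreducibility of $V$ as a $\mathbb{Q}$-representation of $\SU(\phi)$ only tells you that the commutant is a division algebra. To identify it with $Q$ you should add that over $\mathbb{C}$ the representation $V_{\mathbb{C}}$ of $\SU(\phi)_{\mathbb{C}}\cong\SL_{2m}$ decomposes as the standard representation plus its dual, and these are non-isomorphic once $m\ge 2$; hence the commutant has $\mathbb{C}$-dimension $2$ and therefore equals $Q$. The boundary case $m=1$ is degenerate in any event --- as the paper notes, the first interesting case is $\dim A=4$.
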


\begin{proof}
See \cite{weil1977}; also \cite{geemen1994}.
\end{proof}

In a family of abelian varieties, the Mumford-Tate group stays constant on the
complement of a countable union of closed subvarieties where it can only
shrink (see, for example, \cite{milne2013m}, \S 6). The theorem says that, in
the Weil family, the general Mumford--Tate and Lefschetz groups are as large
as possible given the obvious constraints.

\begin{remark}
\label{j50}If $(A,Q,\lambda)$ is general, then it follows from invariant
theory that%
\[
B^{\ast}(A)=D^{\ast}(A)\oplus W(A,Q).
\]
See, for example, \cite{geemen1994}, 6.12.
\end{remark}

\begin{theorem}
\label{j52}If $(A,Q,\lambda)$ is general, then the Weil classes are exotic,
and the Hodge conjecture holds for the powers of $A$ if they are algebraic.
\end{theorem}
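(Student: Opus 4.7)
The plan is to show that Theorem \ref{j52} is an immediate corollary of Theorem \ref{j51} (Weil) combined with Theorem \ref{j63}. Since $(A,Q,\lambda)$ is general, Theorem \ref{j51} gives that the sequence
\[
1\rightarrow\Hg(A)\longrightarrow S(A)\overset{\rho}{\longrightarrow}U_{Q}\rightarrow 1
\]
is exact, i.e., $(A,Q)$ is almost-neat; equivalently, the sequence (\ref{j67}) for $\MT(A)\subset L(A)$ is exact. This is the only place where the generality hypothesis is used.

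For the first claim (that the Weil classes are exotic), I would argue directly from almost-neatness, as already observed in the paragraph preceding Theorem \ref{j63}. The Lefschetz group $L(A)$ is by definition the stabilizer of the Lefschetz classes on the powers of $A$, and it acts on the one-dimensional $Q$-vector space $W(A,Q)$ through the surjective character $\rho\colon L(A)\rightarrow U_{Q}$. Since $U_{Q}$ has no nonzero fixed vectors in its standard action on $Q$, no nonzero element of $W(A,Q)$ is fixed by $L(A)$; consequently no nonzero Weil class is a Lefschetz class, so the Weil classes are exotic.

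For the second claim, I would simply invoke Theorem \ref{j63}: under the additional assumption that the Weil classes on $A$ are algebraic, $M(A)$ must fix $W(A,Q)$ and is therefore contained in $\Ker(\rho)$; by almost-neatness $\Ker(\rho)=\MT(A)$, so $M(A)\subset\MT(A)$. Combined with the general inclusion $\MT(A)\subset M(A)$, this gives $M(A)=\MT(A)$, so the Hodge classes and the algebraic classes coincide on every power of $A$, which is the Hodge conjecture for those varieties.

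I do not anticipate any substantive obstacle: the real content sits in Theorem \ref{j51}, Weil's input controlling the generic Mumford--Tate group on the Weil family, and in Theorem \ref{j63}, whose proof used only the tannakian formalism together with the inclusions $\MT(A)\subset M(A)\subset L(A)$. The present statement merely packages these together under the generality hypothesis.
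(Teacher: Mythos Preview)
Your proposal is correct and follows exactly the paper's approach: the paper's own proof consists of the single line ``This is obvious from the exact sequence $1\rightarrow\MT(A)\rightarrow L(A)\overset{\rho}{\rightarrow}U_{Q}\rightarrow1$,'' which is precisely the almost-neatness coming from Theorem~\ref{j51}, and from which both exoticity (as noted just before Theorem~\ref{j63}) and the Hodge conjecture (Theorem~\ref{j63}) follow. You have simply unpacked that one line.
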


\begin{proof}
This is obvious from the exact sequence%
\[
1\rightarrow\MT(A)\longrightarrow L(A)\overset{\rho}{\longrightarrow}%
U_{Q}\rightarrow1\text{.}%
\]

\end{proof}

\begin{corollary}
\label{j53}Let $(A,Q,\lambda)$ be a general polarized abelian fourfold of Weil
type. If $\det(A,Q,\lambda)=1$ in $\mathbb{Q}{}^{\times}/\Nm(Q^{\times})$,
then the Hodge conjecture holds for the powers of $A$.
\end{corollary}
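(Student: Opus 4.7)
The plan is to combine Markman's theorem (\ref{j14}) with Theorem \ref{j52}, both of which apply in exactly the situation at hand; the corollary should fall out almost immediately.

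First, I would invoke Theorem \ref{j14}: since $(A,Q,\lambda)$ is a polarized abelian fourfold of Weil type with $\det(A,Q,\lambda)=1$ in $\mathbb{Q}^{\times}/\Nm(Q^{\times})$, Markman's theorem gives that the Weil classes on $A$ are algebraic. Note that the one-dimensional $Q$-vector space $W(A,Q)$ needs only contain a single nonzero algebraic class for all of it to be algebraic, since the $Q$-action preserves algebraic classes.

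Next, I would apply Theorem \ref{j52}: because $(A,Q,\lambda)$ is general, the sequence
\[
1\rightarrow\MT(A)\longrightarrow L(A)\overset{\rho}{\longrightarrow}U_{Q}\rightarrow1
\]
is exact by Theorem \ref{j51}. Algebraicity of the Weil classes forces $M(A)$ to lie in $\ker(\rho)=\MT(A)$; since one also has $\MT(A)\subset M(A)$, this gives $M(A)=\MT(A)$. Hence on every power of $A$, the algebraic classes coincide with the Hodge classes, which is exactly the Hodge conjecture for the powers of $A$.

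There is no real obstacle here; the corollary is a direct synthesis of \ref{j14} (which supplies algebraicity of the one extra generator of $B^{\ast}(A)$ over $D^{\ast}(A)$ coming from \ref{j50}) and \ref{j52} (which says that in the general case almost-neatness upgrades algebraicity of Weil classes to the full Hodge conjecture for powers). The only point worth stating explicitly is that a single nonzero algebraic class in $W(A,Q)$ suffices to conclude that all of $W(A,Q)$ is algebraic.
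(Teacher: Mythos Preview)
Your proposal is correct and follows essentially the same approach as the paper: invoke Markman's theorem (\ref{j14}) to get algebraicity of the Weil classes, then apply Theorem \ref{j52}. The paper's proof is even terser (it cites only \ref{j14}, leaving the application of \ref{j52} implicit since the corollary sits directly below it), but your unpacking of why $M(A)=\MT(A)$ is faithful to what is happening.
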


\begin{proof}
According to Theorem \ref{j14}, the Weil classes on $A$ are algebraic.
\end{proof}

\begin{remark}
\label{j80}Let $(A_{i},Q_{i},\lambda_{i})$, $1\leq i\leq n$, be general
polarized abelian varieties of Weil type such that no two of the algebraic
groups $\SU(\phi_{i})$ are isomorphic. Then
\[
\Hg(A)=\Hg(A_{1})\times\cdots\times\Hg(A_{n})
\]
(Goursat's lemma). Hence, if the $(A_{i},Q_{i},\lambda_{i})$ are fourfolds
with determinant $1$, then the Hodge conjecture holds for all varieties of the
form $A_{1}^{m_{1}}\times\cdots\times A_{n}^{m_{n}}$, $m_{i}\in\mathbb{N}$.
\end{remark}

\begin{summary}
\label{j84}Let $A$ be a simple abelian fourfold. Either $A$ is neat, so the
Hodge conjecture holds for the powers of $A$, or there exists an imaginary
quadratic field $Q\subset\End^{0}(A)$ such that $(A,Q)$ is of Weil type. In
the second case, the Hodge conjecture holds for the powers of $A$ if there
exists a polarization $\lambda$ such that $\det(A,Q,\lambda)=1$ and
$(A,Q,\lambda)$ is general.
\end{summary}

\subsection{The Tate conjecture}

Let $X$ be a smooth projective variety over an algebraically closed field $k$
of of characteristic zero. Recall that $H^{2i}(X,\mathbb{Q}{}_{\ell}(i))\simeq
H^{2i}(\rho X,\mathbb{Q}_{\ell}(i))$ for all embeddings $\rho\colon
k\hookrightarrow\mathbb{C}{}$.

\begin{conjecture}
[Deligne]\label{j85}Let $\gamma\in H^{2i}(X,\mathbb{Q}{}_{\ell}(i))$ some
$\ell$. If $\gamma$ becomes a Hodge class\footnote{That is, an element of
$H^{2i}(\rho X,\mathbb{Q}(i))\cap H^{0,0}\subset H^{2i}(\rho X,\mathbb{Q}%
{}_{\ell}(i))$.} in $H^{2i}(\rho X,\mathbb{Q}{}_{\ell}(i))$ for one $\rho$,
then it does so for all $\rho$.
\end{conjecture}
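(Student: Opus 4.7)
The plan is to reformulate the conjecture in the language of absolute Hodge classes. A class $\gamma \in H^{2i}(X,\mathbb{Q}_\ell(i))$ that becomes Hodge under one embedding $\rho_0\colon k \hookrightarrow \mathbb{C}$ lifts to a rational $(0,0)$-class $\gamma_{\rho_0} \in H^{2i}(\rho_0 X,\mathbb{Q}(i))$ whose $\ell$-adic realization via the comparison isomorphism recovers $\gamma$. The conjecture then amounts to the assertion that $\gamma_{\rho_0}$ is absolute Hodge in Deligne's sense, namely that for every other embedding $\rho$ the comparison isomorphism $H^{2i}(X,\mathbb{Q}_\ell(i)) \simeq H^{2i}(\rho X,\mathbb{Q}_\ell(i))$ carries $\gamma$ into the subspace of rational classes of Hodge type $(0,0)$. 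Thus the conjecture is equivalent to the statement that every Hodge class on $X$ (under any embedding) is absolute Hodge.

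For the setting relevant to this paper, namely $X$ an abelian variety or a product of such, I would appeal to Deligne's theorem that every Hodge class on an abelian variety is absolute Hodge. Following Deligne's strategy, the proof proceeds in three main steps. First, one establishes \emph{Principle B}: if a class extends as a flat section of $R^{2i}f_{\ast}\mathbb{Q}(i)$ over the connected base of a smooth proper family $f\colon \mathcal{X} \to S$ of abelian varieties, and is absolute Hodge at one fiber, then it is absolute Hodge at every fiber; the proof exploits that the image of monodromy has finite index in the derived Mumford--Tate group. Second, any abelian variety embeds in a smooth connected family containing a CM fiber, reducing the general problem to the CM case. Third, on CM abelian varieties the Hodge classes are controlled by the CM-type and the action of $\Aut(\mathbb{C}/\mathbb{Q}^{\mathrm{al}})$; combined with Weil's analysis of classes built from split quadrics and the Hodge classes on products of CM elliptic curves (where they are Lefschetz), one verifies absolute Hodge-ness directly.

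The main obstacle to proving the conjecture for an arbitrary smooth projective $X$ is the absence of a deformation-theoretic bridge to a case where Hodge classes can be computed explicitly. Principle B alone does not suffice: it requires $X$ to sit in a connected family whose monodromy is large and which contains a fiber on which one already controls the Hodge classes. Outside the world of abelian varieties --- and a handful of motivated extensions such as $K3$ surfaces and products thereof --- no such mechanism is known. Consequently, for general $X$ the conjecture appears to be out of reach of present methods, and in practice one can only hope to deduce new cases by reducing, via motivated correspondences, to the abelian variety case where Deligne's theorem applies.
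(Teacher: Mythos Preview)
The statement you were asked to address is labelled a \emph{conjecture}, and the paper makes no attempt to prove it in general. The only related argument in the paper is the theorem immediately following, which asserts the conjecture for abelian varieties and whose entire proof is a one-line citation to Deligne 1982. Your proposal correctly recognizes this situation: you reformulate the conjecture in terms of absolute Hodge classes, note that for abelian varieties it is Deligne's theorem, sketch the architecture of Deligne's argument (Principle~B, deformation to a CM fibre, explicit verification in the CM case), and observe that for arbitrary smooth projective $X$ the conjecture remains open. This is accurate and in fact more informative than the paper's own treatment. One small caveat: strictly speaking the conjecture as stated concerns a single $\ell$-adic component, whereas Deligne's notion of absolute Hodge involves all $\ell$ together with the de~Rham realization; so the conjecture is \emph{implied by}, but not literally equivalent to, the absolute-Hodge statement. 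This does not affect your conclusion, since Deligne's theorem establishes the stronger statement for abelian varieties.
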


\begin{theorem}
[Deligne]\label{j86}Conjecture \ref{j85} is true for abelian varieties.
\end{theorem}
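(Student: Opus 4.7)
The plan is to deduce the conjecture from the main theorem of \cite{deligne1982}, that every Hodge class on a complex abelian variety is \emph{absolute Hodge}: when transported through the comparison isomorphisms, it sits in the Hodge position in the Betti realization relative to every embedding of $k$ into $\mathbb{C}{}$ and is simultaneously compatible with its $\ell$-adic avatar across all $\ell$.

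First I would set up the formalism. An absolute Hodge class on $X$ is a compatible family $(\gamma_{B,\rho})_{\rho}\cup (\gamma_{\ell})_{\ell}$, where $\gamma_{B,\rho}\in H^{2i}(\rho X,\mathbb{Q}{}(i))\cap H^{0,0}$ and $\gamma_{\ell}\in H^{2i}(X,\mathbb{Q}{}_{\ell}(i))$, matched by the comparison isomorphism
\[
H^{2i}(\rho X,\mathbb{Q}{}(i))\otimes\mathbb{Q}{}_{\ell}\xrightarrow{\ \sim\ }H^{2i}(X,\mathbb{Q}{}_{\ell}(i))
\]
for each $\rho$. Given $\gamma\in H^{2i}(X,\mathbb{Q}{}_{\ell}(i))$ that is Hodge for one embedding $\rho_{0}$, it pulls back along this comparison to a rational Hodge class $\gamma_{B,\rho_{0}}\in H^{2i}(\rho_{0}X,\mathbb{Q}{}(i))$. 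Deligne's theorem then enlarges $\gamma_{B,\rho_{0}}$ uniquely to an absolute Hodge class; for every other embedding $\rho$ this produces a rational Hodge class $\gamma_{B,\rho}\in H^{2i}(\rho X,\mathbb{Q}{}(i))$ that matches $\gamma$ under the $\rho$-comparison. That is exactly Conjecture \ref{j85}.

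The real content lies in Deligne's theorem, and I would recall its three-step proof. First, \emph{Principle B}: absolute Hodgeness propagates in smooth projective families, because the Gauss--Manin connection on relative de Rham cohomology is algebraic and its flat sections are permuted equivariantly by any automorphism of $\mathbb{C}{}$. Second, using the density of CM points in the Siegel modular variety (and more generally in Shimura varieties of Hodge type), Principle B reduces the problem to the case where the abelian variety is CM. Third, for a CM abelian variety the Mumford--Tate group is a torus, so the space of Hodge classes decomposes into one-dimensional eigenspaces which one identifies with Weil classes attached to CM types; these are shown to be absolute Hodge by invoking Principle B once more in a family of abelian varieties of Weil type, deforming the Weil datum until the Weil class degenerates to a product of divisor classes on a product of abelian varieties, where it is manifestly absolute Hodge.

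The main obstacle is this last step: establishing that Weil classes on CM abelian varieties are absolute Hodge. Controlling a Weil class under an arbitrary field automorphism of $\mathbb{C}{}$ has no direct handle, and the only known route is Deligne's deformation through Weil families to a situation where the class becomes divisorial. Once this is in hand, the remaining ingredients — Principle B, the reduction to the CM case, and the formal deduction of the conjecture from the absolute Hodge property — are essentially formal.
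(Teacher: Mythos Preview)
Your proposal is correct and follows the same approach as the paper: both invoke the main theorem of \cite{deligne1982} that Hodge classes on abelian varieties are absolutely Hodge. The paper's proof is a bare citation, whereas you have helpfully spelled out both the formal deduction of Conjecture~\ref{j85} from absolute Hodgeness and the architecture of Deligne's argument (Principle~B, reduction to CM, Weil classes); this extra detail is accurate and adds nothing the paper would object to.
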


\begin{proof}
This is the main theorem of \cite{deligne1982}
\end{proof}

From now on, the field $k$ will be the algebraic closure of a subfield
finitely generated over $\mathbb{Q}$. Let $X_{1}$ be a model of $X$ over a
finitely generated subfield $k_{1}$ of $k$ with algebraic closure $k$. An
element of the \'{e}tale cohomology group $H^{2i}(X,\mathbb{Q}{}_{\ell})(i)$
is a \emph{Tate class} if it is fixed by some open subgroup of $\Gal(k/k_{1}%
)$. This definition is independent of the choice of the model $X_{1}/k_{1}$. A
Tate class is \emph{exotic} if it is not in the $\mathbb{Q}{}_{\ell}$-algebra
generated by the Tate classes of degree $1$. According to a theorem of
Faltings (1983\nocite{faltings1983}), the nonexotic Tate classes on an abelian
variety are algebraic, i.e., in the $\mathbb{Q}{}_{\ell}$-span of the
cohomology classes of algebraic cycles.

\begin{tconjecture}
All $\ell$-adic Tate classes on $X$ are algebraic.
\end{tconjecture}

\begin{theorem}
[Piatetski-Shapiro, Deligne]\label{j87}Let $A$ be an abelian variety over
$k\subset\mathbb{C}$. If the Tate conjecture holds for $A$, then the Hodge
conjecture holds for $A_{\mathbb{C}}$.
\end{theorem}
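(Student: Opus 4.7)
The plan is to use Deligne's absolute Hodge theorem (\ref{j86}) to convert a Hodge class on $A_{\mathbb{C}}$ into a Tate class on $A$, apply the Tate conjecture, and descend coefficients from $\mathbb{Q}_{\ell}$ to $\mathbb{Q}$.

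First I would choose a finitely generated subfield $k_{1}\subset k$ with $k=\overline{k_{1}}$ and a model of $A$ over $k_{1}$. Smooth proper base change identifies $H^{2i}(A_{\mathbb{C}},\mathbb{Q}_{\ell}(i))$ with $H^{2i}(A_{k},\mathbb{Q}_{\ell}(i))$, so $G=\Gal(k/k_{1})$ acts continuously on this space. Let $\gamma\in H^{2i}(A_{\mathbb{C}},\mathbb{Q}(i))$ be a Hodge class, and let $\gamma_{\ell}$ be its image in $\ell$-adic cohomology under the comparison isomorphism.

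The key step is to show that $\gamma_{\ell}$ is a Tate class. By Theorem \ref{j86}, every Hodge class on $A_{\mathbb{C}}$ is absolute Hodge, so for each $\sigma\in\Aut(\mathbb{C}/k_{1})$ the conjugate $\sigma\gamma$ is again a Hodge class on $A_{\mathbb{C}}$. Hence $G$ stabilizes the finite-dimensional $\mathbb{Q}$-subspace $B^{2i}(A_{\mathbb{C}})\subset H^{2i}(A_{\mathbb{C}},\mathbb{Q}_{\ell}(i))$. The resulting continuous homomorphism $G\to\GL(B^{2i}(A_{\mathbb{C}}))\subset\GL_{n}(\mathbb{Q})\subset\GL_{n}(\mathbb{Q}_{\ell})$ has image that is compact (being the continuous image of a profinite group) and countable (contained in $\GL_{n}(\mathbb{Q})$); but any infinite compact Hausdorff topological group is uncountable, since no point can be isolated (by homogeneity) and a nonempty perfect compact Hausdorff space is uncountable by Baire category. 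So the image must be finite, and in particular some open subgroup of $G$ fixes $\gamma_{\ell}$, i.e., $\gamma_{\ell}$ is a Tate class.

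Applying the Tate conjecture for $A$ gives $\gamma_{\ell}=\sum c_{i}[Z_{i}]_{\ell}$ for algebraic cycles $Z_{i}$ on $A$ and coefficients $c_{i}\in\mathbb{Q}_{\ell}$. Since both $\gamma$ and the cycle classes $[Z_{i}]$ lie in the $\mathbb{Q}$-subspace $H^{2i}(A_{\mathbb{C}},\mathbb{Q}(i))$ of the $\mathbb{Q}_{\ell}$-vector space $H^{2i}(A_{\mathbb{C}},\mathbb{Q}_{\ell}(i))$, and $\mathbb{Q}_{\ell}$ is faithfully flat over $\mathbb{Q}$, the $\mathbb{Q}_{\ell}$-linear relation forces a $\mathbb{Q}$-linear one, exhibiting $\gamma$ as a $\mathbb{Q}$-linear combination of algebraic cycle classes. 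The main obstacle is Deligne's absolute Hodge theorem itself; once that is granted, the remaining argument is a formal compactness-and-descent routine.
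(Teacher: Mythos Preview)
Your argument is correct and follows essentially the same route as the paper's proof: use Deligne's absolute Hodge theorem to see that $\Gal(k/k_{1})$ stabilizes the countable $\mathbb{Q}$-space of Hodge classes, deduce that the action factors through a finite quotient, and then apply the Tate conjecture. You have simply made explicit two points the paper leaves implicit --- the compact-plus-countable argument for finiteness and the $\mathbb{Q}_{\ell}$-to-$\mathbb{Q}$ descent via flatness --- both of which are fine.
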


\begin{proof}
Let $V\subset H^{2i}(A,\mathbb{Q}{}_{\ell}(i))$ be the $\mathbb{Q}{}$-subspace
spanned by the classes on $A$ that become Hodge on $A_{\mathbb{C}{}}$. Let
$A_{1}$ be a model of $A$ over a finitely generated subfield $k_{1}$ of $k$
with algebraic closure $k$. Theorem \ref{j86} implies that the action of
$\Gal(k/k_{0})$ on $H^{2i}(A,\mathbb{Q}{}_{\ell}(i))$ stabilizes $V$. As $V$
is countable, it follows that the action factors through a finite quotient.
Therefore $V$ consists of Tate classes, which are algebraic if the Tate
conjecture holds for $A$.
\end{proof}

\begin{remark}
\label{j88}Let $X$ be a smooth projective variety over $k$. We say that
$\gamma\in H^{2i}(X,\mathbb{Q}{}_{\ell}(i))$ is \emph{absolutely Hodge} if it
becomes Hodge under every embedding $\rho\colon k\hookrightarrow\mathbb{C}{}$.
The argument in the proof of \ref{j87} shows that, if the Tate conjecture
holds for $X$, then all absolutely Hodge classes on $X$ are algebraic.
Therefore,%
\[
\text{Deligne conjecture}+\text{Tate conjecture}\implies\text{Hodge
conjecture.}%
\]

\end{remark}

Let $A$ be an abelian variety over $k$, and let $A_{1}$ be a model of $A$ over
a finitely generated subfield $k_{1}$ with algebraic closure $k$. Some open
subgroup $U$ of $\Gal(k/k_{1})$ acts trivially on the Hodge classes in
$\bigoplus_{r,s}H^{2r}(A^{s},\mathbb{Q}_{l})(r)$ (see the proof of Theorem
\ref{j87}), and it follows that $\MT(A)(\mathbb{Q}_{\ell})$ contains $U$.

\begin{mtconjecture}
\label{j89} The algebraic group $\MT(A)$ is generated by the subgroup $U$,
i.e., if $G$ is an algebraic subgroup of $\MT(A)$ such that $G(\mathbb{Q}%
{}_{\ell})\supset U$, then $G=\MT(A)$.
\end{mtconjecture}

If the conjecture is true for one $U$ contained in $\MT(A)(\mathbb{Q}_{\ell}%
)$, then it is true for all $U$.

\begin{theorem}
\label{j92}If the Mumford--Tate conjecture is true for $A$, then the Tate
conjecture holds for $A$ if and only if the Hodge conjecture holds for
$A_{\mathbb{C}{}}$.
\end{theorem}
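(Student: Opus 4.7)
The forward implication is immediate from Theorem \ref{j87}. For the converse, assume the Hodge conjecture holds for $A_{\mathbb{C}}$, fix a model $A_{1}/k_{1}$ of $A$, and let $U \subset \MT(A)(\mathbb{Q}_{\ell})$ denote the open subgroup of $\Gal(k/k_{1})$ exhibited in the discussion preceding Conjecture \ref{j89}. The plan is to show that every Tate class on every power $A^{s}$ is $\MT(A)(\mathbb{Q}_{\ell})$-invariant; via the comparison isomorphism this identifies the Tate classes with the image of the Hodge classes on $A_{\mathbb{C}}^{s}$, which are algebraic by hypothesis, and a standard spreading-out argument then descends the algebraicity back to $A^{s}$.

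For the key step, let $t \in H^{2r}(A^{s}, \mathbb{Q}_{\ell}(r))$ be a Tate class, fixed by some open subgroup $U_{t} \subseteq \Gal(k/k_{1})$. Then $U \cap U_{t}$ is open in $\Gal(k/k_{1})$ and contained in $\MT(A)(\mathbb{Q}_{\ell})$, so the Mumford--Tate conjecture applied to $U \cap U_{t}$ in place of $U$ (legitimate by the remark following Conjecture \ref{j89}) forces every algebraic subgroup $G \subseteq \MT(A)$ with $G(\mathbb{Q}_{\ell}) \supseteq U \cap U_{t}$ to equal $\MT(A)$. Taking $G = \Stab_{\MT(A)}(t)$ yields that $t$ is $\MT(A)$-invariant. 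By the defining property of the Mumford--Tate group, the $\MT(A)(\mathbb{Q})$-invariants in $H^{2r}(A_{\mathbb{C}}^{s}, \mathbb{Q}(r))$ are exactly the Hodge classes; extending scalars to $\mathbb{Q}_{\ell}$ and transporting via the comparison isomorphism $H^{2r}(A^{s}, \mathbb{Q}_{\ell}(r)) \cong H^{2r}(A_{\mathbb{C}}^{s}, \mathbb{Q}_{\ell}(r))$, the $\MT(A)(\mathbb{Q}_{\ell})$-invariants in the $\ell$-adic group are precisely $B^{2r}(A_{\mathbb{C}}^{s})(r) \otimes_{\mathbb{Q}} \mathbb{Q}_{\ell}$, so $t$ lies in this space.

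By the assumed Hodge conjecture for $A_{\mathbb{C}}$, every Hodge class on $A_{\mathbb{C}}^{s}$ is a $\mathbb{Q}$-linear combination of cycle classes of algebraic cycles on $A_{\mathbb{C}}^{s}$; since any such cycle spreads out over a finite-type $k$-subalgebra of $\mathbb{C}$, which admits a $k$-point because $k$ is algebraically closed, its $\ell$-adic class coincides with that of an algebraic cycle on $A^{s}$. Thus $t$ is algebraic. The main technical point is the identification of Tate classes with $\MT(A)$-invariants, which relies essentially on the robustness of the Mumford--Tate conjecture under shrinking $U$; the remaining ingredients (invariant theory for algebraic groups and the spreading-out of cycles) are formal.
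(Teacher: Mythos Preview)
Your proof is correct and follows essentially the same approach as the paper's: the Mumford--Tate conjecture forces the Zariski closure of any sufficiently small open $U$ to be all of $\MT(A)_{\mathbb{Q}_\ell}$, so the Tate classes coincide with the $\MT(A)$-invariants, i.e., with the Hodge classes tensored up to $\mathbb{Q}_\ell$, whence the two conjectures are equivalent. The paper compresses this into one sentence, while you spell out the stabilizer argument and the spreading-out of cycles; your use of Theorem~\ref{j87} for the forward implication is a harmless shortcut (it does not even require the Mumford--Tate conjecture), and the minor notational slip $B^{2r}(A_{\mathbb{C}}^{s})(r)$ should read $B^{r}(A_{\mathbb{C}}^{s})$ in the paper's conventions.
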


\begin{proof}
Since the Tate classes in $H^{2\ast}(A,\mathbb{Q}_{\ell}(\ast))$ are those
fixed by any sufficiently small $U$, and Hodge classes are those fixed by
$\MT(A)$, equivalently $\MT(A)(\mathbb{Q}_{\ell})$, this is obvious.
\end{proof}

The Mumford--Tate conjecture is known for many abelian varieties, for example,
for elliptic curves, abelian varieties of prime dimension (many authors;
\cite{chi1991}), most abelian fourfolds (\cite{lesin1994}), and all CM abelian
varieties (Shimura, Taniyama; \cite{pohlmann1968}). It is true for a product
of abelian varieties if it is true for each factor (\cite{vasiu2008},
\cite{commelin2019}).

\section{Characteristic $p$}

\subsection{Statement of the folklore conjecture}

Let $X$ be a smooth projective variety over an algebraically closed field $k$,
and let $\ell$ be a prime number distinct from the characteristic of $k$.

\begin{fconjecture}
\label{j32}Numerical equivalence coincides with $\ell$-adic homological
equivalence in the cohomology of $X$.\footnote{This was stated by Tate in the
talk at Woods Hole, 1964, in which he announced his conjectures, and so can be
considered to be part of the Tate conjectures. It is also a consequence of the
standard conjecture, and is sometimes referred to as the homological standard
conjecture.}
\end{fconjecture}

In characteristic zero, this has been proved for abelian varieties
(\cite{lieberman1968}). In characteristic $p$, we have only the following result.

\begin{theorem}
[Clozel]\label{j35}Let $A$ be an abelian variety over $\mathbb{F}{}$
(algebraic closure of the field of $p$ elements). There exists a set $s(A)$ of
primes $\ell$ of density $>0$ such that the folklore conjecture holds for $A$
and the $\ell$ in $s(A)$.
\end{theorem}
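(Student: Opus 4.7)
The plan is to exploit the Frobenius endomorphism together with an analytic density argument on primes $\ell$. First I would reduce to the case where $A$ is defined over a finite field $\mathbb{F}{}_{q}\subset\mathbb{F}{}$, choosing a model $A_{0}/\mathbb{F}{}_{q}$ with $A=A_{0,\mathbb{F}{}}$. The Frobenius $\pi$ of $A_{0}$ acts as a correspondence on each power $A_{0}^{n}$, hence on the $\mathbb{Q}{}$-vector space $\mathcal{N}^{\ast}(A^{n})$ of algebraic cycles modulo numerical equivalence and, compatibly via the cycle class map $\mathrm{cl}_{\ell}$, on $H^{2\ast}(A^{n},\mathbb{Q}{}_{\ell}(\ast))$. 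Because homological equivalence refines numerical equivalence, $\mathrm{cl}_{\ell}$ factors through $\mathcal{N}^{\ast}(A^{n})\otimes\mathbb{Q}{}_{\ell}$, and Conjecture \ref{j32} is equivalent to the injectivity of this factor map.

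Two structural inputs are essential. First, by Weil's Riemann hypothesis together with the work of Tate--Honda, $\pi$ acts semisimply on $H^{\ast}(A^{n},\mathbb{Q}{}_{\ell})$ with characteristic polynomial having integer coefficients \emph{independent of} $\ell$. Second, by Jannsen's theorem the category of numerical abelian motives is semisimple, so $\mathcal{N}^{\ast}(A^{n})$ is a finite-dimensional $\mathbb{Q}{}$-vector space on which $\pi$ acts semisimply; applying the Lefschetz trace formula to numerical motives shows that the traces of $\pi^{m}$ there match those on $\ell$-adic cohomology. Consequently the kernel $K_{\ell}$ of $\mathrm{cl}_{\ell}$ is a $\pi$-stable $\mathbb{Q}{}_{\ell}$-subspace in a comparison of two semisimple $\pi$-modules with matching rational traces.

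The main step is a density argument. Let $E$ be the number field generated over $\mathbb{Q}{}$ by the Weil eigenvalues of $\pi$, and let $R\subset\mathrm{End}(\mathcal{N}^{\ast}(A^{n}))$ be the $\mathbb{Q}{}$-subalgebra generated by $\pi$ and its transpose. The decomposition of $R\otimes\mathbb{Q}{}_{\ell}$ into simple factors is governed by the splitting of $\ell$ in $E$ (and by Brauer classes of the simple factors of $R$). The $\pi$-isotypic multiplicities on $H^{2\ast}(A^{n},\mathbb{Q}{}_{\ell}(\ast))$ are $\ell$-independent, while those on $\mathcal{N}^{\ast}(A^{n})\otimes\mathbb{Q}{}_{\ell}$ can only shrink when a simple factor of $R$ breaks up further on extending scalars to $\mathbb{Q}{}_{\ell}$. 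A \v{C}ebotarev density argument applied to the splitting behaviour in the Galois closure of $E$ then produces a set $s(A)$ of primes $\ell$ of positive density for which no such further splitting occurs; for those $\ell$ the matching of traces and multiplicities forces $K_{\ell}=0$, and $\mathrm{cl}_{\ell}$ is injective as required.

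\textbf{Principal obstacle.} The hardest point is making the density step quantitative: one must control the noncommutative structure of $R$---in particular the Brauer classes of its simple factors---finely enough to ensure that the Chebotarev set has \emph{positive} density rather than merely being nonempty. This relies on the fact that the eigenvalues of $\pi$ lie in a CM field and on the use of independence-of-$\ell$ for the full characteristic polynomial (not merely the trace), together with the semisimplicity of numerical abelian motives so that the comparison of multiplicities is meaningful.
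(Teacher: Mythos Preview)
The paper does not prove this statement; it simply records it as the main theorem of \cite{clozel1999}. Your proposal therefore goes well beyond what the paper does, but the sketch you give is not Clozel's argument and, as written, has a genuine gap.

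The problematic step is the claim that ``applying the Lefschetz trace formula to numerical motives shows that the traces of $\pi^{m}$ [on $\mathcal{N}^{\ast}(A^{n})$] match those on $\ell$-adic cohomology.'' There is no such trace identity: the trace of $\pi^{m}$ on $H^{\ast}(A^{n},\mathbb{Q}_{\ell})$ computes $\#A^{n}(\mathbb{F}_{q^{m}})$, whereas the trace of $\pi^{m}$ on the finite-dimensional $\mathbb{Q}$-space $\mathcal{N}^{\ast}(A^{n})$ of cycles modulo numerical equivalence is an entirely different number, and we have no independent access to it without already knowing $\dim\mathcal{N}^{\ast}$ and its Frobenius decomposition. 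Consequently your comparison of $\pi$-isotypic multiplicities is circular: you cannot conclude $K_{\ell}=0$ from ``matching multiplicities'' when one side of the match is exactly what is in question. The subsequent Brauer-class discussion inherits this gap; controlling how $R\otimes\mathbb{Q}_{\ell}$ splits tells you nothing about the kernel of $\mathrm{cl}_{\ell}$ unless you already know the $\pi$-module structure of $A^{\ast}_{\mathrm{hom}}$.

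Clozel's actual mechanism is different and more concrete. The Frobenius eigenvalues generate a CM field $E$, and the involution $\alpha\mapsto q/\alpha=\bar{\alpha}$ is complex conjugation on $E$. By \v{C}ebotarev applied to $E/\mathbb{Q}$, there is a set of primes $\ell$ of positive density for which complex conjugation lies in the decomposition group at a prime above $\ell$; for such $\ell$ the $\ast$-type operator on $H^{\ast}(A,\mathbb{Q}_{\ell})$ lies in the $\mathbb{Q}_{\ell}$-algebra generated by $\pi$, hence is an algebraic correspondence. Combined with the (known) Lefschetz standard conjecture for abelian varieties, this makes the intersection form on $\ell$-homological classes positive-definite, so the surjection $A^{\ast}_{\mathrm{hom}}\rightarrow A^{\ast}_{\mathrm{num}}$ is injective. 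Your density step via \v{C}ebotarev is the right flavour, but it must be aimed at producing this $\ast$-operator, not at a trace comparison.
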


\begin{proof}
This is the main theorem of \cite{clozel1999}.
\end{proof}

\noindent The set $s(A)$ can be chosen to depend only on the set of simple
isogeny factors of $A$ (\cite{milne2001}, B.2). In particular, $s(A)=s(A^{n})$.

\subsection{Statement of the Tate conjecture over $\mathbb{F}{}$}

Let $\ell$ be a prime number $\neq p$. Let $X$ be a smooth projective variety
over $\mathbb{F}{}$, and let $X_{1}$ be a model of $X$ over a finite subfield
$\mathbb{F}{}_{q}$ of $\mathbb{F}{}$. An element of the \'{e}tale cohomology
group $H^{2i}(X,\mathbb{Q}{}_{\ell})(i)$ is a \emph{Tate class} if it is fixed
by an open subgroup of $\Gal(\mathbb{F}{}/\mathbb{F}{}_{q})$. This definition
is independent of the choice of the model $X_{1}/\mathbb{F}{}_{q}$. A Tate
class is \emph{exotic} if it is not in the $\mathbb{Q}{}_{\ell}$-algebra
generated by the Tate classes of degree $1$. According to a theorem of Tate
(1966\nocite{tate1966e}), the nonexotic Tate classes on an abelian variety are
algebraic, i.e., in the $\mathbb{Q}{}_{\ell}$-span of the cohomology classes
of algebraic cycles.

\begin{tconjecture}
\label{j31}All $\ell$-adic Tate classes on $X$ are algebraic.
\end{tconjecture}

\begin{theorem}
\label{j26} Let $X$ be a smooth projective variety over $\mathbb{F}$. If the
Tate and folklore conjectures are true for one $\ell\neq p$, then they are
true for all.
\end{theorem}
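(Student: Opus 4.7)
The plan is to assume Tate and folklore at a single prime $\ell_{0}$ and squeeze $\dim T^{i}_{\ell}$ between two $\ell$-independent quantities. Fix a model $X_{1}/\mathbb{F}{}_{q}$ with geometric Frobenius $F$, enlarging $q$ if necessary so that every Tate class at $\ell_{0}$ is an honest $F$-eigenvector for eigenvalue $1$ on $H^{2i}(X,\mathbb{Q}{}_{\ell_{0}}(i))$. Write $\rho_{i}:=\dim_{\mathbb{Q}{}}Z^{i}(X)/\!\sim_{\num}$, and let $A^{i}_{\ell}\subseteq T^{i}_{\ell}\subseteq H^{2i}(X,\mathbb{Q}{}_{\ell}(i))$ be the algebraic and Tate subspaces.

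First I would record the universally valid chain
\[
\rho_{i}\;\leq\;\dim A^{i}_{\ell}\;\leq\;\dim T^{i}_{\ell}.
\]
The first inequality holds because $c_{\ell}(Z)=0$ forces $Z\cdot Z'=c_{\ell}(Z)\cdot c_{\ell}(Z')=0$ for every $Z'$, so $Z\sim_{\num}0$; the second because every algebraic class is fixed by some open subgroup of $\Gal(\mathbb{F}{}/\mathbb{F}{}_{q})$. The hypothesis at $\ell_{0}$ collapses this chain: folklore gives $\dim A^{i}_{\ell_{0}}=\rho_{i}$ and Tate gives $A^{i}_{\ell_{0}}=T^{i}_{\ell_{0}}$, whence $\dim T^{i}_{\ell_{0}}=\rho_{i}$.

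Next I would invoke Deligne's theorem from Weil II: the characteristic polynomial of $F$ on $H^{2i}(X,\mathbb{Q}{}_{\ell}(i))$ has rational coefficients independent of $\ell\neq p$. Consequently the multiplicity $t_{i}$ of the eigenvalue $1$ is $\ell$-independent, and the inclusion of $T^{i}_{\ell}$ into the generalized $1$-eigenspace gives $\dim T^{i}_{\ell}\leq t_{i}$. Semisimplicity of $F$ on this generalized eigenspace then upgrades this to the equality $\dim T^{i}_{\ell}=t_{i}$. In the abelian variety setting of this paper, semisimplicity is a theorem of Tate (proved on $H^{1}$ and extended via K\"unneth), so it holds in particular at $\ell_{0}$. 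Combined with the previous paragraph, $t_{i}=\dim T^{i}_{\ell_{0}}=\rho_{i}$.

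Squeezing finishes the proof: for every $\ell\neq p$,
\[
\rho_{i}\;\leq\;\dim A^{i}_{\ell}\;\leq\;\dim T^{i}_{\ell}\;\leq\;t_{i}\;=\;\rho_{i},
\]
so all four quantities coincide. Equality $\dim A^{i}_{\ell}=\rho_{i}$ is folklore at $\ell$, and $A^{i}_{\ell}=T^{i}_{\ell}$ (which follows from equality of dimensions together with $A^{i}_{\ell}\subseteq T^{i}_{\ell}$) is Tate at $\ell$; semisimplicity at $\ell$ drops out as a byproduct. The main obstacle is the semisimplicity input used to identify $t_{i}$ with $\dim T^{i}_{\ell_{0}}$: for the abelian varieties (and their powers) that concern this paper it is classical, while for a wholly general smooth projective $X$ one would have to derive semisimplicity at $\ell_{0}$ from Tate and folklore at $\ell_{0}$ by a more delicate Poincar\'e-duality argument on the generalized $1$-eigenspaces.
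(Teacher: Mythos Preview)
Your argument is exactly the one the paper is citing (Tate 1994, 2.9): the squeeze $\rho_i\le\dim A^i_\ell\le\dim T^i_\ell\le t_i$ together with the $\ell$-independence of $t_i$ from Deligne, collapsed at $\ell_0$ by the hypotheses. So the approach is correct and the same as the paper's.

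The one place where your write-up is incomplete is the semisimplicity input, because the theorem is stated for an arbitrary smooth projective $X$, not only for abelian varieties. You flag this honestly but defer it; since it is the crux, you should close it. From Tate and folklore at $\ell_0$ in codimensions $i$ and $d-i$ you get $A^i_{\ell_0}=T^i_{\ell_0}$, $A^{d-i}_{\ell_0}=T^{d-i}_{\ell_0}$, and the cup-product pairing $T^i_{\ell_0}\times T^{d-i}_{\ell_0}\to\mathbb{Q}_{\ell_0}$ is the numerical pairing, hence nondegenerate. On the generalized $1$-eigenspace $V^{d-i}_{1}$ one always has $\mathrm{im}(F-1)\subset (T^i_{\ell_0})^{\perp}$, so $\ker(F-1)\cap\mathrm{im}(F-1)=0$ there; since $F-1$ is nilpotent on $V^{d-i}_{1}$, this forces $F-1=0$, i.e., $\dim T^{d-i}_{\ell_0}=t_{d-i}=t_i$, and your squeeze goes through for every $\ell$. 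One small related point: enlarge $q$ once so that no Frobenius eigenvalue on any $H^{2i}(X,\mathbb{Q}_{\ell}(i))$ is a nontrivial root of unity (legitimate because the characteristic polynomial is $\ell$-independent); otherwise your bound $\dim T^i_\ell\le t_i$ is only justified at $\ell_0$.
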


\begin{proof}
Folklore; see \cite{tate1994}, 2.9.
\end{proof}

\subsection{Tate classes on abelian varieties}

Let $A$ be an abelian variety over $\mathbb{\mathbb{F}{}}$. A model $A_{1}$ of
$A$ over a finite subfield $\mathbb{F}{}_{q}$ of $\mathbb{F}{}$ defines a
Frobenius element $\pi_{1}\in\End^{0}(A)$. The group $P(A)$ is defined to be
the smallest algebraic subgroup of $L(A)$ containing some power of $\pi_{1}$
--- it is independent of the choice of the model $A_{1}/\mathbb{F}{}_{q}$.
There is a canonical homomorphism $P(A)\rightarrow\mathbb{G}_{m}$, and we let
$P^{\prime}(A)$ denote its kernel.

\begin{theorem}
\label{j9}Let $A$ be an abelian variety over $\mathbb{F}{}$. The following
conditions on $A$ are equivalent:

\begin{enumerate}
\item no power of $A$ supports an exotic Tate class;

\item $P(A)=L(A)$.
\end{enumerate}
\end{theorem}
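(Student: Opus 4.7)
The plan is to prove this as a direct $\ell$-adic analogue of Proposition \ref{j65}, using the descriptions of Tate classes and Lefschetz classes as fixed points of $P(A)$ and $L(A)$ respectively.

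First I would fix the dictionary between subgroups and invariants. By definition, a Tate class in $H^{2i}(A^s,\mathbb{Q}_\ell(i))$ is one fixed by a power of the Frobenius $\pi_1$, hence by the whole algebraic group $P(A)$; conversely, since $P(A)$ is by definition the Zariski closure of $\langle\pi_1^N\rangle$ for some $N$, a class is Tate if and only if it is fixed by $P(A)(\mathbb{Q}_\ell)$. On the other hand, by the case-by-case result cited in the paper (the $\ell$-adic analogue of \cite{milne1999lc} used in the definition of $L(A)$), the classes on all powers of $A$ fixed by $L(A)$ are exactly the Lefschetz classes, i.e., those in the $\mathbb{Q}_\ell$-algebra generated by divisor classes; these are precisely the non-exotic Tate classes.

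For (b) $\Longrightarrow$ (a): if $P(A)=L(A)$, then any Tate class on any power $A^s$ is fixed by $L(A)$, hence is a Lefschetz class, hence is non-exotic. For (a) $\Longrightarrow$ (b): suppose $P(A)\subsetneq L(A)$ properly. Since $L(A)$ is a closed subgroup of $\GL_V\times\mathbb{G}_m$ with $V=H_1(A,\mathbb{Q}_\ell)$ acting faithfully through its natural representation, the standard Chevalley/tannakian fact (e.g.\ \cite{deligneM1982}, 2.21) asserts that a proper closed subgroup of an affine algebraic group acquires strictly more invariants in some object of the tensor category generated by the standard representation and its dual. Such objects all appear, up to Tate twist, as direct summands of $H^{2r}(A^s,\mathbb{Q}_\ell(r))$ for suitable $r,s$. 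Thus there is a class in some $H^{2r}(A^s,\mathbb{Q}_\ell(r))$ fixed by $P(A)$ but not by $L(A)$, i.e., a Tate class that is not Lefschetz, and hence exotic.

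The implication (b) $\Longrightarrow$ (a) is essentially formal, and (a) $\Longrightarrow$ (b) reduces to the tannakian fact that subgroups are detected by invariants in tensor representations. The only nontrivial input is the identification of Lefschetz classes with $L(A)$-fixed classes in $\ell$-adic cohomology of powers of $A$; this is where the case-by-case analysis in \cite{milne1999lc} is used, and it is the main obstacle in the sense that nothing lighter is available in place of it. Once this identification is granted, the equivalence is immediate.
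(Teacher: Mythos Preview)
Your argument is correct and is essentially the same as the paper's: Tate classes are identified with $P(A)$-invariants, Lefschetz classes with $L(A)$-invariants via \cite{milne1999lc}, 3.2, and the equivalence follows because algebraic subgroups are determined by their invariants in tensor representations. The paper compresses your (a) $\Rightarrow$ (b) into the single phrase ``both groups are determined by their fixed tensors,'' which is exactly the Chevalley/tannakian fact you spell out.
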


\begin{proof}
Let $\ell$ be a prime $\neq p$. Almost by definition, the Tate classes are the
$\ell$-adic cohomology classes fixed by $P(A)$. On the other hand, the
cohomology classes fixed by $L(A)$ are exactly those in the $\mathbb{Q}_{\ell
}$-algebra generated by the Tate classes of degree 1 (\cite{milne1999lc},
3.2). Therefore (b) implies (a), and the converse is true because both groups
are determined by their fixed tensors.
\end{proof}

\noindent An abelian variety over $\mathbb{F}$ satisfying the equivalent
conditions of the proposition is said to be \emph{neat}. Many abelian
varieties are known to be neat, for example, products of elliptic curves, and
abelian varieties satisfying certain conditions on their Newton polygons
(Lenstra, Spiess, Zarhin; see \cite{milne2001}, A.7).

\subsection{Statement of the standard conjectures}

Let $X$ be a smooth projective variety of dimension $n$ over an algebraically
closed field $k$ (possibly of characteristic zero). For $\ell\neq
\text{char}(k)$, let $L\colon H^{i}(X,\mathbb{Q}{}_{\ell})\rightarrow
H^{i+2}(X,\mathbb{Q}_{\ell})(1)$ denote the Lefschetz operator on $\ell$-adic
\'{e}tale cohomology defined by an ample divisor. According to the strong
Lefschetz theorem (\cite{deligne1980}), the map
\[
L^{n-2i}\colon H^{2i}(X,\mathbb{Q}_{\ell})(i)\rightarrow H^{2n-2i}%
(X,\mathbb{Q}{}_{\ell})(n-i)
\]
is an isomorphism for all $i\leq n/2$. Let $A_{\ell}^{i}(X)$ denote the
$\mathbb{Q}$-subspace of $H^{2i}(X,\mathbb{Q}_{\ell})(i)$ spanned by the
classes of algebraic cycles.

\begin{lconjecture}
The map
\[
L^{n-2i}\colon A_{\ell}^{i}(X)\rightarrow A_{\ell}^{n-i}(X)
\]
is an isomorphism for all $i\leq n/2$.
\end{lconjecture}

The map $L^{n-2i}$ is always injective, and it is surjective, for example, if
the folklore conjecture holds for $X$ and $\ell$ (because then $A_{\ell}%
^{i}(X)$ and $A_{\ell}^{n-i}(X)$ are finite dimensional and dual).

Assuming the Lefschetz standard conjecture, we get a decomposition%
\[
A_{\ell}^{i}(X)=P_{\ell}^{i}(X)\oplus LP_{\ell}^{i-1}(X)\oplus\cdots,
\]
where $P_{\ell}^{j}(X)=\Ker\left(  L^{n-2j+1}\colon A_{\ell}^{j}(X)\rightarrow
A_{\ell}^{n-j+1}(X)\right)  $.

\begin{hconjecture}
The pairing%
\begin{equation}
a,b\mapsto(-1)^{i}\langle L^{n-2i}a\cdot b\rangle\colon P_{\ell}^{i}(X)\times
P_{\ell}^{i}(X)\rightarrow\mathbb{Q} \label{j10}%
\end{equation}
is positive definite for all $i\leq n/2$.
\end{hconjecture}

In characteristic zero, the Hodge standard conjecture follows from Hodge theory.

\begin{theorem}
\label{j16}Let $X$ be a smooth projective variety over $\mathbb{F}$. If the
Tate and standard conjectures hold for $X$ and one $\ell\neq p$, then they
hold for $X$ and all $\ell\neq p$.
\end{theorem}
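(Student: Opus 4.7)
The plan is to route the comparison through the $\ell$-independent space of algebraic cycles modulo numerical equivalence. Fix a prime $\ell_{0}\neq p$ for which the Tate and standard conjectures hold, and write $A^{i}_{\mathrm{num}}(X)$ for the $\mathbb{Q}$-space of codimension~$i$ cycles modulo numerical equivalence, which carries an intrinsic cup product pairing and an action of the class of the chosen ample divisor.

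The first step is to deduce the folklore conjecture at $\ell_{0}$ from the two standard conjectures at $\ell_{0}$. By the Lefschetz standard conjecture, $A^{i}_{\ell_{0}}(X)=\bigoplus_{j\geq 0}L^{j}P^{i-j}_{\ell_{0}}(X)$, and $L^{n-2i}\colon A^{i}_{\ell_{0}}(X)\to A^{n-i}_{\ell_{0}}(X)$ is an isomorphism. Pulling the cup product pairing $A^{i}_{\ell_{0}}\times A^{n-i}_{\ell_{0}}\to\mathbb{Q}$ back to $A^{i}_{\ell_{0}}\times A^{i}_{\ell_{0}}$ via $L^{n-2i}$ yields a bilinear form which, with respect to the primitive decomposition, is an orthogonal sum of $\pm 1$ times the Hodge-standard pairings on each $P^{i-j}_{\ell_{0}}(X)$. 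The Hodge standard conjecture makes each summand definite, so the total form is non-degenerate. Its radical coincides with the kernel of the tautological surjection $A^{i}_{\ell_{0}}(X)\twoheadrightarrow A^{i}_{\mathrm{num}}(X)$, so that surjection is an isomorphism, proving the folklore conjecture at $\ell_{0}$.

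Now Theorem \ref{j26} applies: the Tate and folklore conjectures propagate to every $\ell\neq p$. For each such $\ell$, the cycle class map therefore induces a canonical isomorphism $A^{i}_{\ell}(X)\cong A^{i}_{\mathrm{num}}(X)$ that is compatible with the cup product and with the action of $L$ (both sides being induced by the fixed ample divisor class). Under these identifications, the Lefschetz standard conjecture at $\ell$ becomes the $\ell$-independent assertion that $L^{n-2i}\colon A^{i}_{\mathrm{num}}(X)\to A^{n-i}_{\mathrm{num}}(X)$ is an isomorphism, and $P^{i}_{\ell}(X)$ corresponds to the intrinsic subspace $\ker(L^{n-2i+1})\subset A^{i}_{\mathrm{num}}(X)$. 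The Hodge-standard pairing $(a,b)\mapsto(-1)^{i}\langle L^{n-2i}a\cdot b\rangle$ is then a single $\mathbb{Q}$-valued form on this intrinsic subspace, independent of $\ell$, so its positive definiteness at $\ell_{0}$ is literally the same statement as its positive definiteness at any other $\ell$.

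The main obstacle is the first step, namely extracting the folklore conjecture at $\ell_{0}$ from the two standard conjectures at $\ell_{0}$; this is the classical non-degeneracy-from-positivity argument and requires both halves of the standard package. Once it is available, every $\ell$-adic ingredient appearing in the statement of the standard conjectures is visibly an avatar of the intrinsic numerical data, and $\ell$-independence is immediate.
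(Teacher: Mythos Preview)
Your argument is correct and follows the same three-step route as the paper: deduce the folklore conjecture at $\ell_{0}$ from the standard conjectures (via nondegeneracy of the pairing coming from Hodge positivity on the primitive pieces), invoke Theorem~\ref{j26} to propagate Tate and folklore to all $\ell$, and then observe that the standard conjectures become statements about the $\ell$-independent space $A^{i}_{\mathrm{num}}(X)$. The paper's proof is terser but structurally identical; you have simply unpacked the implications it leaves implicit.
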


\begin{proof}
Suppose that the Tate and standard conjectures hold for $X$ and $\ell_{0}$.
The standard conjecture of Hodge type for $X$ and $\ell_{0}$ implies the
folklore conjecture for $X$ and $\ell_{0}$. Because the Tate and folklore
conjectures hold for $X$ and $\ell_{0}$, they hold for $X$ and all $\ell\neq
p$ (see \ref{j26}). Because the folklore conjecture is true for all $\ell\neq
p$, the standard conjectures are independent of $\ell$.
\end{proof}

The Lefschetz standard conjecture is known for abelian varieties (Kleiman,
Lieberman) --- we even know that the correspondence is given by a Lefschetz
class (\cite{milne1999lc}, 5.9). For the Hodge standard conjecture, the first
interesting case is abelian fourfolds. Concerning this, there is the following result.

\begin{theorem}
[Ancona]\label{j33}Let $A$ be an abelian fourfold over an algebraically closed
field $k$. The pairing (\ref{j10}) is positive definite on the algebraic
cycles modulo numerical equivalence.
\end{theorem}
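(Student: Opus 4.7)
The plan is to separate by characteristic. In characteristic zero the statement follows from classical Hodge theory, so I focus on $\mathrm{char}(k)=p>0$, and by standard spreading out and specialization arguments it suffices to treat $k=\mathbb{F}$. For a fourfold, the Hodge standard pairing must be verified for $i\in\{0,1,2\}$. The case $i=0$ is the positivity $\deg L^{4}>0$ of the top self-intersection of an ample class. The case $i=1$ is the Hodge index theorem for divisors on a fourfold, which one obtains from the classical surface Hodge index by restricting primitive divisor classes to a smooth surface section cut by powers of $L$, valid in any characteristic via Riemann--Roch. The substantive case is $i=2$: positive definiteness of the intersection form on the primitive algebraic middle cohomology $P_{\ell}^{2}(A)$.

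For $i=2$ my strategy would be to translate the pairing into a Rosati-type positivity statement. Using the principal-polarization-induced autoduality $A\simeq A^{\vee}$ and the exterior decomposition $H^{\ast}(A,\mathbb{Q}_{\ell})\simeq\bigwedge\nolimits^{\ast}H^{1}(A,\mathbb{Q}_{\ell})$, an algebraic class in $A_{\ell}^{2}(A)$ corresponds, via the diagonal and Künneth, to an element of $\End_{\mathbb{Q}_{\ell}}(H^{1}(A,\mathbb{Q}_{\ell}))$ constrained by $\dagger$-symmetry for the Rosati involution. Under this identification the intersection pairing, restricted to $P_{\ell}^{2}(A)$, takes the shape $(f,g)\mapsto\Tr(fg^{\dagger})$ up to a universal sign. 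On the Lefschetz subalgebra $D^{\ast}(A)$ this is positive definite by the classical positivity of the Rosati involution on $\End^{0}(A)$, so on the Lefschetz part the conjecture is essentially tautological.

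The main obstacle is that on an abelian fourfold over $\mathbb{F}$ the full space of algebraic classes modulo numerical equivalence can be strictly larger than the Lefschetz subalgebra, so the Rosati positivity on $\End^{0}(A)$ does not directly cover every primitive algebraic class. To bridge the gap I would lift: by Honda--Tate, $A$ is isogenous to the reduction of a CM abelian variety $\tilde{A}$ over a number field, and specialization is compatible with intersection pairings, while on $\tilde{A}_{\mathbb{C}}$ the Hodge standard pairing on $P^{2}$ is positive definite by Hodge theory. The delicate point, which I expect to be the crux of Ancona's argument, is to show that every primitive algebraic class on $A$ either lifts to a class on $\tilde{A}$ whose image in the complex cohomology is Hodge, or is matched by such a class so that the positive definiteness descends through specialization. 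Executing this matching --- presumably via the Künneth/motivic decomposition for abelian varieties together with deformation-theoretic control on the relevant piece of $H^{4}$ --- is where the real work lies.
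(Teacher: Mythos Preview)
The paper gives no proof here; it simply cites \cite{ancona2021}. So your sketch has to be measured against Ancona's actual argument, not anything in the present article.

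Your handling of $i=0,1$ is standard and correct. The gap is in $i=2$. First a minor point: the identification of $A_{\ell}^{2}(A)\subset\bigwedge^{4}H^{1}$ with a subspace of $\End_{\mathbb{Q}_{\ell}}(H^{1})$ is not right as written; it is $\bigwedge^{2}H^{1}$, i.e., divisor classes, that embed in $\End(H^{1})$ via the polarization, and this is why the Rosati argument you give only reaches the Lefschetz part. More seriously, your lifting strategy cannot close. Specialization of algebraic (or Hodge) classes runs from characteristic zero to characteristic $p$, not the reverse. There is no known way to lift an arbitrary primitive algebraic class on $A/\mathbb{F}$ to a Hodge class on a CM lift $\tilde{A}_{\mathbb{C}}$, and in general the space of Tate classes on $A$ is strictly larger than the image of the Hodge classes on any lift, precisely because $P(A)\subsetneq\MT(\tilde{A})$ typically---Example \ref{j62} in this paper exhibits exactly that phenomenon. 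The ``matching'' you defer to the end is therefore not a technical wrinkle but a statement of essentially the same depth as the conjectures the surrounding paper is trying to establish; assuming it here is circular.

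Ancona avoids lifting altogether. He works directly with the $\mathbb{Q}$-rational quadratic form on codimension-$2$ numerical classes and decomposes it under the action of the Lefschetz group $L(A)$. On the Lefschetz summand, positivity is the Rosati trace form, as you observe. The exotic summands that can occur on a fourfold are tightly constrained by the representation theory of $L(A)$---essentially the Weil-type situation---and on each such piece he identifies the restricted form, up to a positive rational scalar, with an explicit norm/trace form built from the CM data in $\End^{0}(A)$, then determines the sign by a computation with rational quadratic forms. The argument is characteristic-free and makes no passage to $\mathbb{C}$.
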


\begin{proof}
This is the main theorem of \cite{ancona2021}.
\end{proof}

\begin{corollary}
\label{j34}Let $A$ be an abelian fourfold over $\mathbb{F}{}$. The Hodge
standard conjecture holds for $A$ and all $\ell\in s(A)$ (see \ref{j35}). If
the Tate conjecture holds for $A$ and one $\ell\in s(A)$, then the Hodge
standard conjecture holds for $A$ and all $\ell\neq p$.
\end{corollary}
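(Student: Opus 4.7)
The plan is to derive both assertions from Ancona's theorem (\ref{j33}) together with Clozel's theorem (\ref{j35}), bootstrapping the second from the first via Theorem \ref{j16}.

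For the first assertion, I would fix $\ell \in s(A)$. By construction, $A_\ell^i(A)$ is the image in $H^{2i}(A,\mathbb{Q}_\ell(i))$ of the cycle class map from the $\mathbb{Q}$-vector space of codimension-$i$ algebraic cycles, so its kernel is $\ell$-adic homological equivalence. Since $\ell \in s(A)$, the folklore conjecture holds for $A$ and $\ell$, so this kernel coincides with numerical equivalence. Hence the induced map from the codimension-$i$ cycles modulo numerical equivalence onto $A_\ell^i(A)$ is an isomorphism of $\mathbb{Q}$-vector spaces, compatible with the Lefschetz operator and the degree pairing. Because the Lefschetz standard conjecture holds for abelian varieties, this isomorphism carries $P_\ell^i(A)$ onto the primitive part of the numerical cycles. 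Ancona's theorem asserts that the pairing (\ref{j10}), which is $\mathbb{Q}$-valued, is positive definite on the primitive part of the numerical cycles; transporting via the isomorphism yields the Hodge standard conjecture for $A$ and $\ell$.

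For the second assertion, suppose the Tate conjecture holds for $A$ and some $\ell_0 \in s(A)$. Then the folklore conjecture (by the definition of $s(A)$) and the Hodge standard conjecture (by the first part) both hold for the pair $(A,\ell_0)$, and the Lefschetz standard conjecture holds for $A$ on general grounds. Hence both the Tate conjecture and the full standard conjectures hold for $(A,\ell_0)$, and Theorem \ref{j16} then propagates them to every $\ell\neq p$; in particular, the Hodge standard conjecture holds for $A$ and all $\ell\neq p$.

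The proof is essentially an assembly of Theorems \ref{j33}, \ref{j35}, \ref{j16}, and the Lefschetz standard conjecture for abelian varieties, so I do not anticipate a real obstacle. The only subtlety is that Ancona's positive definiteness must transfer through the folklore-induced identification between $A_\ell^i(A)$ and the cycles modulo numerical equivalence; this works because the identification is an isomorphism of $\mathbb{Q}$-vector spaces respecting both the Lefschetz operator and the $\mathbb{Q}$-valued intersection pairing, which is immediate from the definition of $A_\ell^i(A)$ once the folklore conjecture is invoked.
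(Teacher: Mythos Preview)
Your proposal is correct and follows exactly the route the paper takes: the first assertion is Ancona's theorem \ref{j33} transported through the identification of $A_\ell^i(A)$ with cycles modulo numerical equivalence (valid because $\ell\in s(A)$), and the second is Theorem \ref{j16} applied once the Tate and standard conjectures are known for a single $\ell_0\in s(A)$. The paper's own proof records this in one line (``The first assertion is obvious, and the second follows from Theorem \ref{j16}''); you have simply unpacked what ``obvious'' means.
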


\begin{proof}
The first assertion is obvious, and the second follows from Theorem \ref{j16}.
\end{proof}

\subsection{A criterion for the Hodge standard conjecture}

Let $k$ be an algebraically closed field, and let $\Mot(k;\mathcal{S}{})$ be
the category of motives modulo numerical equivalence generated by a collection
$\mathcal{S}$ of smooth projective varieties over $k$. Suppose that, for some
prime $\ell_{0}$, the Lefschetz standard conjecture and the folklore
conjecture hold for the varieties in $\mathcal{S}{}$. Then $\Mot(k;\mathcal{S}%
{})$ is a tannakian category with a fibre functor $\omega_{\ell_{0}}$, and it
has a natural structure of a Tate triple.

For every variety $X$ in $\mathcal{S}{}$ and $i\in\mathbb{N}{}$, there exists
a subobject $p^{i}(X)$ of $h^{2i}(X)(i)$ and a pairing $\phi^{i}\colon
p^{i}(X)\otimes p^{i}(X)\rightarrow\1$, both fixed by the fundamental group of
$\Mot(k)$, such that $\omega_{\ell_{0}}(\phi^{i})$ is the pairing in the
statement of the Hodge standard conjecture.

\begin{proposition}
\label{j19}The Hodge standard conjecture holds for the varieties in
$\mathcal{S}{}$ and $\ell_{0}$ if and only if there exists a polarization on
$\Mot(k;\mathcal{S}{})$ for which the forms $\phi^{i}$ are positive.
\end{proposition}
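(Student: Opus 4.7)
The plan is to invoke Deligne's formalism of polarizations on a semisimple tannakian Tate triple (Deligne 1989, \S5). First note the structural setup: because the folklore conjecture holds for the varieties in $\mathcal{S}{}$ and $\ell_{0}$, the category $\Mot(k;\mathcal{S}{})$ is semisimple (Jannsen 1992), and with its natural grading and Tate twist it is a Tate triple. A polarization, by definition, is a rule assigning to each object $X$ a class of ``positive'' bilinear forms, closed under positive scaling, direct sum, tensor product, and duality, and such that the image under any fibre functor is positive definite in the vector-space sense.

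The ``if'' direction is immediate from the definitions. A positive form in a polarization is sent by any fibre functor to a positive definite bilinear form; applying $\omega_{\ell_{0}}$ to $\phi^{i}$ therefore yields the positive definiteness of $\omega_{\ell_{0}}(\phi^{i})$, which is the statement of the Hodge standard conjecture for the varieties in $\mathcal{S}{}$ and $\ell_{0}$.

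For the ``only if'' direction, assume the Hodge standard conjecture for the varieties in $\mathcal{S}{}$ and $\ell_{0}$. The Lefschetz standard conjecture (known for abelian varieties, and more generally for the $X\in\mathcal{S}{}$ under our hypotheses) gives a motivic Lefschetz decomposition
\[
h^{2i}(X)(i)\simeq\bigoplus_{j\geq 0} L^{j}p^{i-j}(X),
\]
so the primitive objects $p^{i}(X)$ (together with Tate twists and duals) generate $\Mot(k;\mathcal{S}{})$ under $\oplus$, $\otimes$, and summands. The strategy is to declare each $\phi^{i}$ positive and then close under positive scaling, $\oplus$, $\otimes$, and duality to obtain a candidate polarization. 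The key propagation step is that $\phi^{i}$ together with the Lefschetz operator yields the Hodge-type pairings on each $L^{j}p^{i-j}(X)$, giving a coherent system of forms on all graded pieces.

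The main obstacle is verifying Deligne's axioms, and in particular the consistency of the construction: a given simple object may appear as a summand of many tensor products of primitives, so one must check that no contradictory positivity assignments arise. By semisimplicity and Schur's lemma, any two candidate positive forms on a simple summand differ by a nonzero scalar, so consistency reduces to the statement that positivity is preserved under $\otimes$, $\oplus$, and $\vee$. This is a pointwise statement after applying the fibre functor $\omega_{\ell_{0}}$, where it is the elementary fact that tensor products and direct sums of positive definite forms are positive definite, and duals of such forms are positive definite. Hence the construction yields a polarization on $\Mot(k;\mathcal{S}{})$ for which each $\phi^{i}$ is positive, completing the equivalence.
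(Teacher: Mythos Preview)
Your ``if'' direction rests on a mistaken definition. A polarization on a Tate triple in the sense of \cite{deligneM1982}, \S 5 (or \cite{deligne1989}, \S 5) is a purely categorical datum: for each homogeneous object $X$ an equivalence class $\Pi(X)$ of Weil forms, closed under $\oplus$, $\otimes$, and compatible with the grading. There is \emph{no} clause requiring that a fibre functor carry positive forms to positive-definite ones; indeed, most fibre functors land in vector spaces over fields where ``positive definite'' has no meaning. So the sentence ``A positive form in a polarization is sent by any fibre functor to a positive definite bilinear form'' is not a definition but the very thing to be proved, and it is false as stated for a general $\omega$.

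The paper's argument for $\Leftarrow$ supplies exactly the missing mechanism. One invokes \cite{deligneM1982}, 5.20: given a polarization $\Pi$ on $\Mot(k;\mathcal{S})$, there is a morphism of Tate triples $\xi\colon\Mot(k;\mathcal{S})\to\mathsf{V}$ (the category of $\mathbb{Z}$-graded complex vector spaces with a semilinear involution) carrying $\Pi$ to the canonical polarization on $\mathsf{V}$, and in $\mathsf{V}$ positivity \emph{does} mean positive-definiteness. Since $p^{i}(X)$ lies in the subcategory of $\boldsymbol{\pi}$-invariants, the functor $\gamma^{V}\circ\xi$ restricted there is isomorphic to $\gamma^{\mathrm{Mot}}=\Hom(\1,-)$, whence $\gamma^{\mathrm{Mot}}(\phi^{i})$ is positive definite. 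This is a genuine theorem about tannakian categories (ultimately the existence of compact real forms), not a tautology.

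Your ``only if'' direction is correct in outline---declare the $\phi^{i}$ positive and propagate---but what you sketch is precisely the content of \cite{saavedra1972}, VI~4.4, which the paper simply cites. Verifying that the closure under $\oplus$, $\otimes$, duals, and subquotients yields a consistent assignment satisfying all of Deligne's axioms is not quite the one-line reduction you suggest; it is cleaner to invoke Saavedra's construction of the canonical polarization and then observe (via \cite{deligneM1982}, 4.11b) that $\phi^{i}$, as the restriction of a positive form to a subobject, is positive.
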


\begin{proof}
$\Rightarrow$: If the Hodge standard conjecture holds for all $X\in
\mathcal{S}{}$, then there is a canonical polarization $\Pi$ on
$\Mot(k;\mathcal{S}{})$ for which the bilinear forms
\[
\varphi^{i}\colon h^{i}(X)\otimes h^{i}(X)\overset{\id\otimes\ast
}{\rightarrow}h^{i}(X)\otimes h^{2n-i}(X)(n-i)\rightarrow h^{2n}%
(X)(n-i)\simeq\1(-i)
\]
are positive (Saavedra 1972, VI 4.4) --- here $X\in\mathcal{S}$ has dimension
$n$ and $\ast$ is defined by an ample divisor of $X$. The restriction of
$\varphi^{2i}\otimes\id_{\1(2i)}$ to the subobject $p^{i}(X)$ of
$h^{2i}(X)(i)$ is the form $\phi^{i}$, which is therefore positive for $\Pi$
(\cite{deligneM1982}, 4.11b).

$\Leftarrow$: We let $\mathsf{V}$ denote the category of $\mathbb{Z}$-graded
$\mathbb{C}{}$-vector spaces $V$ equipped with a semilinear automorphism $a$
such that $a^{2}v=(-1)^{n}v$ $(v\in V^{n})$; it has a natural structure of a
Tate triple over $\mathbb{R}{}$ (\cite{deligneM1982}, 5.3). Let $\Pi$ be a
polarization on $\Mot(k;\mathcal{S}{})$ for which the forms $\phi^{i}$ are
positive. There exists a morphism of Tate triples $\xi\colon\Mot(k;\mathcal{S}%
{})\rightarrow\mathsf{V}$ such that $\xi$ maps $\Pi$ to the canonical
polarization $\Pi^{V}$ on $\mathsf{V}$; in particular, for $X$ of weight $0$
and $\phi\in\Pi(X)$, $(\gamma^{V}\circ\xi)(\phi)$ is a positive definite
symmetric form on $(\gamma^{V}\circ\xi)(X)$ (ibid.~5.20). The restriction of
$\gamma^{V}\circ\xi$ to $\Mot(k;\mathcal{S}{})^{\boldsymbol{\pi}}$ is
(uniquely) isomorphic to $\gamma^{\text{Mot}}$, and so $\gamma^{\text{Mot}%
}(\phi^{i})$ is positive definite.
\end{proof}

Let $A$ be an abelian variety over $\mathbb{F}$, and let $\langle
A\rangle^{\otimes}$ be the category of motives modulo numerical equivalence
generated by $A$ and $\mathbb{P}^{1}$; it has a natural structure of a Tate
triple. Note that $\langle A\rangle^{\otimes}$ contains the motives of the
powers of $A$. Recall that the Lefschetz standard conjecture holds for abelian varieties.

\begin{corollary}
\label{j36}Let $\ell\in s(A)$; the Hodge standard conjecture holds for $\ell$
and the powers of $A$ if and only if there exists a polarization on the Tate
triple $\langle A\rangle^{\otimes}$ such that the forms $\phi^{i}(A^{r})\colon
p^{i}(A^{r})\otimes p^{i}(A^{r})\rightarrow\1$ are positive, all
$i,r\in\mathbb{N}{}$.
\end{corollary}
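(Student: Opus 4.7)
The plan is to apply Proposition \ref{j19} with $\mathcal{S}=\{A^r\mid r\in\mathbb{N}\}\cup\{\mathbb{P}^1\}$, so that $\Mot(\mathbb{F};\mathcal{S})$ is precisely the Tate triple $\langle A\rangle^{\otimes}$, and then to match the conclusion to the statement of the corollary.

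First I would verify the two standing hypotheses of Proposition \ref{j19} for this $\mathcal{S}$ and $\ell_0=\ell$. The Lefschetz standard conjecture is known for all abelian varieties by Kleiman--Lieberman (in the sharpened form, recalled above, that the correspondence is given by a Lefschetz class, cf.\ \cite{milne1999lc}, 5.9), and it holds trivially for $\mathbb{P}^1$; hence it holds for every $X\in\mathcal{S}$. For the folklore conjecture, Clozel's Theorem \ref{j35} provides a density-positive set $s(A)$ of primes for which it holds, and the remark immediately following \ref{j35} asserts that this set depends only on the set of simple isogeny factors, so in particular $s(A)=s(A^r)$ for every $r$. Thus for any $\ell\in s(A)$, the folklore conjecture is satisfied for every $A^r$, and again trivially for $\mathbb{P}^1$. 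The hypotheses of \ref{j19} are therefore in force.

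Proposition \ref{j19} then asserts that the Hodge standard conjecture holds for every $X\in\mathcal{S}$ and this $\ell$ if and only if there exists a polarization of the Tate triple $\langle A\rangle^{\otimes}$ for which all the forms $\phi^{i}(X)$ are positive. Since the Hodge standard conjecture is automatic for $\mathbb{P}^1$ and the only nontrivial objects of $\mathcal{S}$ are the $A^r$, the left-hand side is exactly the Hodge standard conjecture for $\ell$ and the powers of $A$, while the right-hand side reduces to positivity of $\phi^{i}(A^r)$ for all $i,r\in\mathbb{N}$. This is precisely the equivalence claimed in the corollary.

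There is no essential obstacle beyond correctly identifying the generating collection with the statement of Proposition \ref{j19}; the substantive inputs (Kleiman--Lieberman for the Lefschetz conjecture and Clozel's theorem for the folklore conjecture, together with the $s(A)=s(A^r)$ observation) are already recorded in the excerpt.
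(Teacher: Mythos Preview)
Your proof is correct and follows exactly the approach the paper intends: the paper's own proof is simply ``Immediate consequence of the proposition,'' and you have spelled out the verification of the hypotheses of Proposition~\ref{j19} (Lefschetz standard conjecture via Kleiman--Lieberman, folklore conjecture via Clozel together with $s(A)=s(A^{r})$) and the identification $\Mot(\mathbb{F};\mathcal{S})=\langle A\rangle^{\otimes}$.
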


\begin{proof}
Immediate consequence of the proposition.
\end{proof}

\begin{aside}
\label{j37}Besides abelian varieties, the most natural varieties to
\textquotedblleft test\textquotedblright\ these conjectures on are $K3$
surfaces. The Hodge conjecture is known for squares of $K3$ surfaces with
complex multiplication (Mukai; \cite{buskin2019}, 1.3), and the Tate
conjecture is known for $K3$ surfaces over $\mathbb{F}{}$ and their squares
(many authors; \cite{itoIT2021}).
\end{aside}

\section{Mixed characteristic}

Let $\mathbb{Q}^{\mathrm{al}}$ be the algebraic closure of $\mathbb{Q}{}$ in
$\mathbb{C}{}$, and let $w_{0}$ be a prime of $\mathbb{Q}{}^{\mathrm{al}}$
dividing $p$.\footnote{In the following, it is possible to replace
$\mathbb{Q}{}^{\mathrm{al}}$ with an algebraic closure $C$ of its completion
at $w_{0}$, and then choose an embedding of $C$ into $\mathbb{C}{}$, but this
requires the axiom of choice instead of the more benign axiom of dependent
choice.} The residue field at $w_{0}$ is an algebraic closure $\mathbb{F}$ of
$\mathbb{F}_{p}$. Let $A$ be an abelian variety over $\mathbb{Q}%
{}^{\mathrm{al}}$ with good reduction to an abelian variety $A_{0}$ over
$\mathbb{F}{}$. If $A$ is of CM-type, then there is a unique homomorphism
$L(A_{0})\rightarrow L(A)$ compatible with the actions of the groups on
cohomology and with the specialization isomorphisms $H^{i}(A^{s},\mathbb{Q}%
{}_{\ell})\simeq H^{i}(A_{0}^{s},\mathbb{Q}{}_{\ell})$. As absolute Hodge
classes are Tate classes, some Frobenius endomorphism for $A_{0}$ will lie in
$\MT(A)(\mathbb{Q}{})$, and so $P(A_{0})\subset\MT(A)$. Thus we have a
commutative diagram%
\begin{equation}
\begin{tikzcd} \MT(A)\arrow[hook]{r}&L(A)\\ P(A_{0})\arrow[hook]{r}\arrow[hook]{u}&L(A_{0})\arrow[hook]{u}. \end{tikzcd} \label{jbb}%
\end{equation}
If $A$ is not CM, the diagram still exists, but only as a diagram of groups in
the tannakian category of Lefschetz motives generated by $A_{0}$.

\begin{theorem}
\label{j11}Let $A$ be a CM abelian variety over $\mathbb{Q}{}^{\mathrm{al}}$
with good reduction at $w_{0}$ to an abelian variety $A_{0}$ over
$\mathbb{F}{}$. Assume that%
\begin{equation}
P(A_{0})=L(A_{0})\cap\MT(A)\quad\quad\text{(intersection inside }L(A)).
\tag{*}\label{blah}%
\end{equation}
If the Hodge conjecture holds for $A$ and its powers, then

\begin{enumerate}
\item the Tate and folklore conjectures hold for $A_{0}$ and its powers;

\item the Hodge standard conjecture holds for $A_{0}$ and its powers.
\end{enumerate}
\end{theorem}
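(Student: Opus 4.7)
The strategy is to reduce both conclusions to the single group identity
\[
P(A_0)=M(A_0)
\]
as algebraic subgroups of $L(A_0)$, where $M(A_0)\subseteq L(A_0)$ denotes the subgroup fixing the classes of algebraic cycles (modulo numerical equivalence) on the powers of $A_0$.

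\textbf{Part (a).} Since an algebraic cycle on $A_0^s$ is defined over a finite extension and so its class in $\ell$-adic cohomology is fixed by an open subgroup of $\Gal(\mathbb{F}/\mathbb{F}_q)$, every algebraic class is a Tate class; equivalently, $P(A_0)\subseteq M(A_0)$. For the reverse inclusion, specialization of cycles sends algebraic cycles on $A^s$ to algebraic cycles on $A_0^s$, compatibly with cycle class maps and with the specialization isomorphism $H^{*}_\ell(A^s)\simeq H^{*}_\ell(A_0^s)$. Under the embedding $L(A_0)\hookrightarrow L(A)$ of diagram~\eqref{jbb}, this forces $M(A_0)\subseteq M(A)$: the $M(A_0)$-invariants contain the images of the $M(A)$-invariants. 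The Hodge conjecture for the powers of $A$ gives $M(A)=\MT(A)$, so hypothesis~\eqref{blah} yields
\[
M(A_0)\;\subseteq\;L(A_0)\cap\MT(A)\;=\;P(A_0),
\]
and hence $P(A_0)=M(A_0)$. For any $\ell\in s(A_0)$, the folklore conjecture is Clozel's Theorem~\ref{j35}, so the cycle class map from algebraic-mod-numerical into $H^{*}_\ell$ is injective; the group identity then implies that the $\mathbb{Q}_\ell$-span of algebraic cycle classes coincides with the space of Tate classes, which is the Tate conjecture at $\ell$. Theorem~\ref{j26} propagates both the Tate and folklore conjectures to every $\ell\neq p$.

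\textbf{Part (b).} By Corollary~\ref{j36}, it suffices to produce, for some $\ell\in s(A_0)$, a polarization on the Tate triple $\langle A_0\rangle^{\otimes}$ under which every form $\phi^{i}(A_0^{r})$ is positive; Theorem~\ref{j16} will then spread the Hodge standard conjecture to every $\ell\neq p$. In characteristic zero, Hodge theory canonically polarizes the Tate triple of absolute Hodge motives generated by $A$: the Weil operator $C=h(i)\in\MT(A)(\mathbb{R})$ gives a Cartan involution on the adjoint group, and the classical Hodge--Riemann bilinear relations yield positivity of the forms $\phi^{i}(A^{r})$. The Hodge conjecture for the powers of $A$ identifies this category with $\langle A\rangle^{\otimes}$, so the latter carries a polarization $\Pi$ for which each $\phi^{i}(A^{r})$ is positive. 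Part~(a) realizes the fundamental group of $\langle A_0\rangle^{\otimes}$ as $P(A_0)=L(A_0)\cap\MT(A)$, an algebraic $\mathbb{Q}$-subgroup of $\MT(A)$. Lifting a polarization of $A_0$ to a polarization of $A$ by deformation theory pairs the Rosati involutions on $\End^{0}(A)$ and $\End^{0}(A_0)$ compatibly under specialization; the positive involution defining $\Pi$ therefore descends, via the inclusion $P(A_0)\hookrightarrow\MT(A)$, to polarization data on $\langle A_0\rangle^{\otimes}$. Since $\phi^{i}(A_0^{r})$ is identified with $\phi^{i}(A^{r})$ under the specialization isomorphism, the induced polarization on $\langle A_0\rangle^{\otimes}$ has the required positivity, and Corollary~\ref{j36} concludes.

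\textbf{Main obstacle.} The delicate step is the tannakian transfer in part~(b): verifying that the Hodge-theoretic positive involution on $\MT(A)$ actually descends along the inclusion $P(A_0)\hookrightarrow\MT(A)$ granted by~\eqref{blah}, and that the forms $\phi^{i}(A_0^{r})$ inherit strict positivity under this descent rather than mere semidefiniteness. Without~\eqref{blah} one has no control of $P(A_0)$ inside $\MT(A)$, so the transfer cannot even be formulated; \eqref{blah} is precisely what makes the Tate triple $\langle A_0\rangle^{\otimes}$ visible as a ``slice'' of the Hodge-polarized Tate triple $\langle A\rangle^{\otimes}$, allowing the characteristic-zero polarization to be pulled back and give the Hodge standard conjecture in characteristic $p$.
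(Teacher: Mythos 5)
Your argument is correct and essentially the same as the paper's. Both establish $P(A_0)\subseteq M(A_0)$ trivially, then use the Hodge conjecture together with specialization of cycles to get $M(A_0)\subseteq\MT(A)$ (you go via $M(A_0)\subseteq M(A)=\MT(A)$, the paper goes directly), and then invoke~\eqref{blah} and Theorem~\ref{j26}. This part is fine.

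\textbf{Part (b): a genuine gap.} You correctly identify that the target is a polarization on the Tate triple $\langle A_0\rangle^{\otimes}$ making the forms $\phi^i(A_0^r)$ positive, but the step from the characteristic-zero polarization $\Pi$ to a polarization $\Pi_0$ on $\langle A_0\rangle^{\otimes}$ is asserted rather than proved. The claim that ``$\phi^i(A_0^r)$ is identified with $\phi^i(A^r)$ under the specialization isomorphism'' and hence inherits positivity is false as stated: the primitive algebraic piece $p^i(A_0^r)$ in characteristic $p$ is in general strictly larger than $p^i(A^r)$, because $A_0$ typically supports more algebraic classes (more endomorphisms, extra Tate/Frobenius classes). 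Hodge theory gives positivity only on the smaller space $p^i(A^r)$; it says nothing a priori about the extra directions in $p^i(A_0^r)$. This is precisely the difficulty, and ``the positive involution descends'' does not address it. The paper handles this via a criterion (Lemma~\ref{j82}, from \cite{milne2002p}, 1.5) for a quotient Tate triple to inherit a polarization: one must exhibit a single object $X\in(\langle A\rangle^{\otimes})^{P(A_0)}$ on which $\MT(A)/P(A_0)$ acts \emph{faithfully} together with a form $\psi\in\Pi(X)$ whose reduction $q(\psi)$ is positive definite. Your proposal contains neither the faithfulness verification nor the concrete positive form. The paper supplies both: the object is $X=\underline{\End}(h_1A)^{P(A_0)}$, faithfulness follows from hypothesis~\eqref{blah} combined with the non-trivial combinatorial Lemma~\ref{j83} (showing $L(A)/L(A_0)$ acts faithfully on $\underline{\End}(h_1A)^{L(A_0)}$, proved via CM-types and Weil germs), and positivity of $q(\psi)$ reduces to positivity of the Rosati trace form on $\End^0(A_0)$ (Weil, Th\'eor\`eme 38). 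These are the essential content of part~(b), and none of it appears in your argument; your ``main obstacle'' paragraph names the difficulty but does not resolve it.
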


\noindent For any $A$, there exists a CM abelian variety $B$ such that
$A\times B$ satisfies (*) (see \ref{j12} below).

\begin{proof}
(a) Let $\ell_{0}\in s(A_{0})$ (so the folklore conjecture holds for $\ell
_{0}$ and the powers of $A_{0}$). Let $M(A_{0})$ be the algebraic subgroup of
$L(A_{0})$ fixing the $\ell_{0}$-adic algebraic classes on the powers of
$A_{0}$. Because algebraic classes are Tate, $P(A_{0})\subset M(A_{0})$, and
the assumption of the Hodge conjecture implies that $M(A_{0})\subset\MT(A)$.
Now (*) implies that $M(A_{0})=P(A_{0})$, and so the $\ell_{0}$-adic Tate
classes on the powers of $A_{0}$ are algebraic.\footnote{This uses that the
cohomology classes fixed by $M(A)$ are algebraic --- see the Appendix}
According to Theorem \ref{j26}, this implies that the Tate and folklore
conjectures hold for the powers of $A_{0}$ and all $\ell\neq p$.

(b) We defer the proof to later in this section.
\end{proof}

\begin{corollary}
\label{j17}Let $A$ be an abelian variety over $\mathbb{Q}{}^{\mathrm{al}}$
with good reduction at $w_{0}$ to an abelian variety $A_{0}$ over
$\mathbb{F}{}$.

\begin{enumerate}
\item If $A$ is neat, then the Hodge conjecture holds for the powers of $A$.

\item If $A_{0}$ is neat, then (\ref{blah}) holds for $A$ and the Tate
conjecture holds for the powers of $A_{0}$.

\item If $A_{0}$ is neat and $\End^{0}(A)=\End^{0}(A_{0})$, then $A$ is neat.

\item If both $A$ and $A_{0}$ are neat, then the Hodge standard conjecture for
the powers of $A_{0}$.
\end{enumerate}
\end{corollary}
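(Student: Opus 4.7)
The plan is to handle the four parts in order: (a), (b), (c) are short deductions from the definitions and the structural results already assembled, while (d) is an application of Theorem \ref{j11} once (a) and (b) are in hand. For (a), neatness of $A$ means $\MT(A)=L(A)$ by Proposition \ref{j65}, so every Hodge class on every power of $A$ is a Lefschetz class, hence algebraic by the theorem of Lefschetz. For (b), neatness of $A_0$ gives $P(A_0)=L(A_0)$ by Theorem \ref{j9}; combined with the inclusions $P(A_0)\subset\MT(A)$ and $L(A_0)\subset L(A)$ recorded in diagram (\ref{jbb}), this yields
\[
P(A_0)\subset L(A_0)\cap\MT(A)\subset L(A_0)=P(A_0),
\]
forcing equality throughout, which is condition (\ref{blah}). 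Moreover, no power of $A_0$ supports an exotic Tate class, so by Tate's theorem that nonexotic $\ell$-adic Tate classes on an abelian variety over $\mathbb{F}$ are algebraic, the Tate conjecture holds for all powers of $A_0$.

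For (c), the abelian variety $A_0$ over $\mathbb{F}$ is automatically CM by Tate, so $\End^0(A_0)$ contains an \'etale subalgebra of degree $2\dim A_0=2\dim A$; the hypothesis $\End^0(A)=\End^0(A_0)$ then transports this into $\End^0(A)$, making $A$ CM as well. Consequently $C(A)$ and $C(A_0)$ both equal the centre of this common endomorphism algebra, and a polarization of $A$ reducing to a polarization of $A_0$ ensures that the Rosati involutions on this centre agree. Hence $L(A_0)=L(A)$ as subgroups of $\GL_V\times\mathbb{G}_m$, and the chain $P(A_0)\subset\MT(A)\subset L(A)=L(A_0)=P(A_0)$ forces $\MT(A)=L(A)$, so $A$ is neat.

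For (d), parts (a) and (b) supply the two hypotheses needed in Theorem \ref{j11}: the Hodge conjecture for $A$ and its powers, and condition (\ref{blah}). If $A$ is CM, part (b) of Theorem \ref{j11} delivers the Hodge standard conjecture for the powers of $A_0$ directly. Otherwise, one first replaces $A$ with $A\times B$ for a CM auxiliary $B$ chosen so that $A\times B$ still satisfies (\ref{blah}) (as in the remark following Theorem \ref{j11}), applies the theorem to $A\times B$, and then extracts the Hodge standard conjecture for powers of $A_0$ from the one for powers of $A_0\times B_0$.

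The main obstacle I anticipate lies in part (c), in establishing $L(A)=L(A_0)$: it requires that the centralizer of $\End^0$ and the Rosati involution genuinely transport across specialization, which is routine in the CM setting once a compatible polarization is selected but needs the matching just indicated. A secondary subtlety is that Theorem \ref{j11} is formulated for CM $A$, so for non-CM $A$ in part (d) one must either extend the theorem directly or reduce to the CM case via an auxiliary variety as sketched.
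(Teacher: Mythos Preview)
Your arguments for (a), (b), (c), and the CM case of (d) are correct and match the paper's (very terse) proof; you have simply unpacked what ``almost by definition'' and ``follows from the diagram (\ref{jbb})'' mean. In particular, your derivation of $L(A)=L(A_0)$ in (c) from $\End^0(A)=\End^0(A_0)$ is exactly the content hidden in the paper's one-line reference to the diagram.

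The non-CM branch of (d), however, has a genuine gap. Your auxiliary-$B$ trick cannot work as stated: an abelian variety is CM if and only if each simple isogeny factor is CM, so if $A$ is not CM then $A\times B$ is not CM for any $B$, and Theorem~\ref{j11} still does not apply to it. Even setting this aside, the $B$ supplied by Example~\ref{j12} (which is what the remark after Theorem~\ref{j11} points to) is far from neat, so $A\times B$ would not be neat and you would lose the Hodge conjecture input coming from (a). To be fair, the paper is no more explicit here: it invokes Remark~\ref{j73}, but that remark only weakens the algebraicity hypothesis and extends \emph{part~(a)} of Theorem~\ref{j11} to non-CM $A$; it says nothing about part~(b). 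The honest way to close the gap is to observe that the conclusion of (d) depends only on $A_0$: replace $A$ by a CM lift $A'$ of $A_0$ (as in Aside~\ref{j93}) with enough endomorphisms that $L(A')=L(A_0)$; then (c) makes $A'$ neat, and the CM case of (d) applied to $A'$ yields the Hodge standard conjecture for the powers of $A_0$.
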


\begin{proof}
Statements (a) and (b) are true almost by definition, (c) follows from the
diagram (\ref{jbb}), and (d) follows from (b) of Theorem \ref{j11} and the
next remark.
\end{proof}

\begin{remark}
\label{j73}In Theorem \ref{j11}, it is not necessary to assume that the Hodge
classes on $A$ are algebraic, only almost-algebraic. Part (a) of the theorem
also holds for non-CM abelian varieties $A$ with essentially the same proof.
\end{remark}

\begin{remark}
If $L(A_{0})=L(A)$, then (\ref{blah}) holds if an only if $\pi_{A_{0}}$
generates $\MT(A)$. Here $\pi_{A_{0}}$ denotes a sufficiently high power of
the Frobenius element defined by a some model of $A_{0}$ over a finite field.
\end{remark}

\qquad We list some abelian varieties for which (\ref{blah}) holds.

\begin{example}
\label{j12}Let $K$ be a CM-subfield of $\mathbb{Q}^{\mathrm{al}}{}$, finite
and Galois over $\mathbb{Q}{}$, and let $A$ be a CM abelian variety over
$\mathbb{Q}{}^{\mathrm{al}}$ with reflex field contained in $K$ and such that
every simple CM abelian variety over $\mathbb{Q}{}^{\mathrm{al}}$ with reflex
field contained in $K$ is an isogeny factor of $A$. Then (\ref{blah})
holds\footnote{For example, for each CM-type $\Phi$ on $K$, let $A_{\Phi}$ be
an abelian variety of $\mathbb{Q}{}^{\mathrm{al}}$ of CM-type $(K,\Phi)$. Then
$A\overset{\df}{=}\prod A_{\Phi}$ has the required property.}

In this case, (\ref{blah}) becomes the formula $P^{K}=L^{K}\cap S^{K}$ of
\cite{milne1999lm}, Theorem 6.1. There it is proved under the hypothesis that
$K$ contains an imaginary quadratic field in which $p$ splits, but this
assumption is unnecessary (see the Appendix).
\end{example}

\begin{example}
\label{j13}Let $(A\times B^{m-2},Q)$ be as in Example \ref{j77}. Let $K$ be
the subfield of $\mathbb{Q}{}^{\mathrm{al}}$ generated by the conjugates of
$E$ in $\mathbb{Q}{}^{\mathrm{al}}$. Let $H=\Gal(K/\varphi_{0}E)$ and let
$D(w_{0})\subset\Gal(K/\mathbb{Q}{})$ be the decomposition group of $w_{0}|K$.
Assume that $p$ splits in $Q$ and that
\[
H\cdot D(w_{0})=D(w_{0})\cdot H.
\]
Then (\ref{blah}) holds for $A\times B$ (see \cite{milne2001}). Thus Theorem
\ref{j11} applies to $A\times B$.
\end{example}

\begin{example}
\label{j62}We give an example where (*) fails. Let $\pi$ be a Weil $q$-integer
of degree $6$ with the following properties:

\begin{enumerate}
\item for all $v|p$, $\frac{v(\pi)}{v(q)}[\mathbb{Q}{}[\pi]_{v}\colon
\mathbb{Q}{}_{p}]\equiv0$ mod $1;$

\item there exists an imaginary quadratic field $Q\subset\mathbb{Q}{}[\pi]$
such that $\Nm_{\mathbb{Q}{}[\pi]/Q}(\pi^{2}/q)=1$.
\end{enumerate}

\noindent Such a $\pi$ exists. The simple abelian variety $A_{0}$ over
$\mathbb{F}{}_{q}$ with Weil integer $\pi$ has dimension $3$ and endomorphism
algebra $E\overset{\df}{=}\mathbb{Q}{}[\pi]$. Up to isogeny, there exists a
lift $A$ of $A_{0}$ to $\mathbb{Q}{}^{\mathrm{al}}$ with complex
multiplication by $E$ (\cite{tate1968}, Thm 2). As $A$ has dimension $3$, it
is neat. On the other hand,%
\[
P(A_{0})\subset\Ker(L(A_{0})\rightarrow(\mathbb{G}_{m})_{Q/\mathbb{Q}{}})
\]
and so $P(A_{0})\neq L(A_{0})\cap\MT(A)$. The group $P(A_{0})$ acts trivially
on
\[
\big(  \bigwedge\nolimits_{Q\otimes\mathbb{Q}_{\ell}}^{6}H^{1}(A^{2}%
,\mathbb{Q}{}_{\ell})\big)  (3),
\]
which therefore consists of exotic Tate classes.
\end{example}

\subsection{Proof of (b) of Theorem \ref{j11}}

Let $R\colon\mathsf{C}_{1}\rightarrow\mathsf{C}_{2}$ be a functor of Tate
triples. We say that $R$ maps a polarization $\Pi_{1}$ on $\mathsf{C}_{1}$ to
a polarization $\Pi_{2}$ on $\mathsf{C}_{2}$ if $\psi\in\Pi_{1}(X)$ implies
$R(\psi)\in\Pi_{2}(R(X))$, i.e., if all Weil forms positive for $\Pi_{1}$ map
to Weil forms positive for $\Pi_{2}$.

Let $A$ be an abelian variety over $\mathbb{Q}{}^{\mathrm{al}}$ with good
reduction at $w_{0}$ to an abelian variety $A_{0}$ over $\mathbb{F}{}$. Assume
that (\ref{blah}) holds and that the Hodge conjecture holds for $A$ and its
powers. According to Theorem \ref{j16}, it suffices to prove the Hodge
standard conjecture for a single $\ell$. Implicitly, we choose $\ell\in s(A)$.

Let $\langle A\rangle^{\otimes}$ be the category of motives generated by $A$
and $\mathbb{P}{}^{1}$ using Hodge classes as correspondences, and let
$\langle A_{0}\rangle^{\otimes}$ be the category of numerical motives
generated by $A_{0}$ and $\mathbb{P}{}^{1}$. We regard both as Tate triples
with their natural additional structures. Because we are assuming the Hodge
conjecture for the powers of $A$, there is a reduction functor $q\colon\langle
A\rangle^{\otimes}\rightarrow\langle A_{0}\rangle^{\otimes}$. This realizes
$\langle A_{0}\rangle^{\otimes}$ as a quotient of $\langle A\rangle^{\otimes}$
in the sense of \cite{milne2007qtc}, 2.2. Hodge theory provides the Tate
triple $\langle A\rangle^{\otimes}$ with a canonical polarization $\Pi$.

\begin{lemma}
\label{j81} To prove (b) of Theorem \ref{j11}, it suffices to show that there
exists a polarization $\Pi_{0}$ on $\langle A_{0}\rangle^{\otimes}$ such that
$q$ maps $\Pi$ to $\Pi_{0}$.
\end{lemma}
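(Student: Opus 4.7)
The plan is to combine the hypothesis with Corollary \ref{j36}. That corollary says that the Hodge standard conjecture for the powers of $A_0$ (at a chosen $\ell\in s(A_0)$) is equivalent to the existence of a polarization on the Tate triple $\langle A_0\rangle^{\otimes}$ for which every form $\phi^i(A_0^r)\colon p^i(A_0^r)\otimes p^i(A_0^r)\to\1$ is positive. Thus it suffices to show that the $\Pi_0$ supplied by the hypothesis of the lemma has this positivity property.

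First I would verify that the reduction functor $q$ identifies the tannakian data $p^i(A^r),\phi^i(A^r)$ in $\langle A\rangle^{\otimes}$ with $p^i(A_0^r),\phi^i(A_0^r)$ in $\langle A_0\rangle^{\otimes}$. By good reduction, one can choose an ample divisor on a model of $A$ whose specialization to $A_0$ is again ample; the associated Lefschetz operators are then compatible with specialization, the subobjects $p^i$ are defined as kernels of iterated Lefschetz operators, and the pairings $\phi^i$ are assembled from the Lefschetz operator and cup product. All of this is preserved by $q$ because $q$ is a functor of Tate triples and is defined precisely so as to commute with the specialization isomorphisms on $\ell$-adic cohomology.

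Next I would invoke Hodge theory: the canonical polarization $\Pi$ on $\langle A\rangle^{\otimes}$ makes every $\phi^i(A^r)$ positive, since this is just the Hodge standard conjecture in characteristic zero (noted in the excerpt preceding Theorem \ref{j16}). Because $q$ maps $\Pi$ to $\Pi_0$ by hypothesis, the image $q(\phi^i(A^r))=\phi^i(A_0^r)$ lies in $\Pi_0(p^i(A_0^r))$, i.e.\ is positive for $\Pi_0$. Applying Corollary \ref{j36} then yields the Hodge standard conjecture for $A_0$ and its powers at $\ell$, and hence at every $\ell\neq p$ by Theorem \ref{j16}, which is exactly assertion (b) of Theorem \ref{j11}.

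The main obstacle is mostly bookkeeping: carefully matching the tannakian constructions of $p^i$ and $\phi^i$ on the two sides and checking they commute with $q$. The subtlety is that the source category $\langle A\rangle^{\otimes}$ is built using Hodge correspondences while the target $\langle A_0\rangle^{\otimes}$ uses numerical correspondences, so one must use that $q$ is a well-defined functor of Tate triples (which in turn uses the assumption that the Hodge conjecture holds for the powers of $A$, so that Hodge classes descend to numerical classes after reduction).
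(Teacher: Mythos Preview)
Your proposal is correct and follows essentially the same route as the paper: choose an ample divisor on $A$ that specializes to one on $A_{0}$, note that the forms $\phi^{i}(A^{r})$ are positive for the canonical polarization $\Pi$ by Hodge theory, observe that $q$ carries them to $\phi^{i}(A_{0}^{r})$, and then apply Corollary~\ref{j36}. The paper's proof is terser but identical in substance.
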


\begin{proof}
Let $D$ be an ample divisor on $A$, and use it to define the bilinear forms
\[
\phi^{i}(A^{r})\colon p^{i}(A^{r})\otimes p^{i}(A^{r})\rightarrow\1,\quad
i,r\in\mathbb{N}{}.
\]
Then $\phi^{i}(A^{r})$ is positive (by definition) for $\Pi$, and $q(\phi
^{i}(A^{r}))$ is the bilinear form $\phi^{i}(A_{0}^{r})$ defined by the ample
divisor $D_{0}$ on $A_{0}$. As $q$ maps $\Pi$ to $\Pi_{0}$, the forms
$\phi_{0}^{i}(A^{r})$ are positive for $\Pi_{0}$, which implies that the Hodge
standard conjecture holds for the varieties in $\langle A_{0}\rangle^{\otimes
}$ (see \ref{j36}).
\end{proof}

\begin{lemma}
\label{j82}There exists a polarization $\Pi_{0}$ on $\langle A_{0}%
\rangle^{\otimes}$ such that $q$ maps $\Pi$ to $\Pi_{0}$ if there exists an
$X\in\left(  \langle A\rangle^{\otimes}\right)  ^{P(A_{0})}$ and a $\psi\in
\Pi(X)$ such that $\MT(A)/P(A_{0})$ acts faithfully on $X$ and $q(\psi)$ is a
positive definite form on the vector space $q(X)$.
\end{lemma}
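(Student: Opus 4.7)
The plan is to construct $\Pi_{0}$ directly as a pushforward of $\Pi$ along $q$ and then verify the polarization axioms of \cite{deligneM1982}, \S 5. For each homogeneous object $Y$ of $\langle A_{0}\rangle^{\otimes}$, I would define $\Pi_{0}(Y)$ to consist of all Weil forms of the shape $q(\varphi)$, where $\varphi\in\Pi(Z)$ for some $Z\in\langle A\rangle^{\otimes}$ with $q(Z)\simeq Y$, together with everything obtained from these by the closure operations forced by the polarization axioms (positive rational scalars, tensor products, duals, direct sums). Nonemptiness of $\Pi_{0}(Y)$ for every $Y$ follows from the essential surjectivity of $q$ (a consequence of its being a quotient functor in the sense of \cite{milne2007qtc}, 2.2) combined with the nonemptiness of $\Pi(Z)$ for every homogeneous $Z\in\langle A\rangle^{\otimes}$.

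The crucial step is then to check that this candidate family is genuinely a polarization, i.e.\ that the sets $\Pi_{0}(Y)$ are internally consistent in the sense that no two elements of $\Pi_{0}(Y)$ are related by a ``negative'' automorphism of $Y$, and that the tensor/duality axioms are satisfied globally. By the general theory of polarized Tate triples (\cite{deligneM1982}, 4.4 and the discussion in \S 5, which underlies the $\Leftarrow$ direction of Proposition \ref{j19}), such consistency can be reduced to a positivity check on a single test object on which the fundamental group acts faithfully. In our situation, the faithful test datum lives in $\langle A\rangle^{\otimes}$ rather than $\langle A_{0}\rangle^{\otimes}$: the hypothesis supplies $X\in(\langle A\rangle^{\otimes})^{P(A_{0})}$ on which $\MT(A)/P(A_{0})$ acts faithfully. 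The form $\psi\in\Pi(X)$ records $\Pi$-positivity in the direction transverse to $P(A_{0})=\pi(\langle A_{0}\rangle^{\otimes})$, while the assumed positive definiteness of $q(\psi)$ on the vector space $q(X)$ (which, being $P(A_{0})$-trivial as an object of $\langle A_{0}\rangle^{\otimes}$, is a sum of copies of $\1$) anchors the positivity structure on the trivial part.

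The main obstacle is the consistency verification itself: ruling out the possibility that two distinct lifts $Z_{1},Z_{2}$ of a common object $Y\in\langle A_{0}\rangle^{\otimes}$ carry $\Pi$-positive forms whose pushdowns $q(\varphi_{1}),q(\varphi_{2})$ are not positively related on $Y$. The hypothesis on $(X,\psi)$ is tailored for exactly this: the faithful action of $\MT(A)/P(A_{0})$ on $X$ rigidifies the set of lift-ambiguities modulo $P(A_{0})$, and positivity of $q(\psi)$ on the trivial piece $q(X)$ then feeds through the tensor constructions to produce a uniformly positive family on arbitrary $Y$. Once this is in place, the remaining polarization axioms follow from functoriality of $q$, and by construction $q$ carries every $\psi'\in\Pi$ into $\Pi_{0}$, so $q$ maps $\Pi$ to $\Pi_{0}$ as required.
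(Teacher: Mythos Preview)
The paper's proof is a one-line citation: ``Apply \cite{milne2002p}, 1.5.'' That result is precisely the statement that a polarization on a tannakian Tate triple descends along a quotient functor once one has a single pair $(X,\psi)$ with the properties listed in the hypothesis. Your proposal is, in effect, an attempt to reconstruct the content of that cited result rather than to invoke it.

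As a reconstruction, the outline is in the right spirit, but the argument has a genuine gap at the point you yourself flag as ``the main obstacle.'' You assert that the consistency of the pushed-forward family $\Pi_{0}$ ``can be reduced to a positivity check on a single test object on which the fundamental group acts faithfully,'' citing \cite{deligneM1982}, \S\S4--5. But those sections classify polarizations on a \emph{given} tannakian Tate triple in terms of Hodge elements; they do not directly furnish a descent criterion along a quotient $q$. The passage from ``$\MT(A)/P(A_{0})$ acts faithfully on $X$ and $q(\psi)$ is positive definite'' to ``the pushforwards $q(\varphi_{1}),q(\varphi_{2})$ of $\Pi$-positive forms on two different lifts of the same $Y$ are positively related'' is exactly the substance of \cite{milne2002p}, 1.5, and your sketch does not supply it: phrases like ``rigidifies the set of lift-ambiguities'' and ``feeds through the tensor constructions'' are placeholders for an argument, not an argument. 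In particular, one needs to know that the kernel of $q$ is controlled by $P(A_{0})\subset\MT(A)$ in a way that makes the faithful $X$ a genuine test object for \emph{all} lift-ambiguities, and that requires the structure theory of quotients of polarized Tate triples developed in the cited paper.

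If you want a self-contained proof here, you should either reproduce the relevant portion of \cite{milne2002p}, \S1 (the description of polarizations on a quotient in terms of the Hodge element and the positivity of a single faithful form), or simply cite it as the paper does.
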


\begin{proof}
Apply \cite{milne2002p}, 1.5.
\end{proof}

We now prove (b) of Theorem \ref{j11} by showing that there exists a pair
$(X,\psi)$ satisfying the conditions of \ref{j82}. Let $X=\underline{\End}%
(h_{1}A)^{P(A_{0})}$. In Lemma \ref{j83} below, we show that $L(A)/L(A_{0})$
acts faithfully on $\underline{\End}(h_{1}A)^{L(A_{0})}$. As (\ref{blah})
implies that $\MT(A)/P(A_{0})$ injects into $L(A)/L(A_{0})$, this shows that
$\MT(A)/P(A_{0})$ acts faithfully on $X$.

Let $\phi\colon h_{1}A\otimes h_{1}A\rightarrow\1(-1)$ be the form defined by
an ample divisor $D$ on $A$, and let $\psi$ be the symmetric bilinear form on
$\End(h_{1}A)$ defined by $\phi$ (see \cite{milne2002p}, 1.1). Write $\psi|$
for the restriction of $\psi$ to $X$. Then $\psi|\in\Pi(X)$ and it remains to
show that $q(\psi|)$ is positive definite. But $q(X)=\End^{0}(A_{0})$ and
$q(\psi)$ is the trace pairing $u,v\mapsto\Tr(u\cdot v^{\dagger})$ of the
Rosati involution defined by the divisor $D_{0}$ on $A_{0}$, which is positive
definite by \cite{weil1948}, Th\'{e}or\`{e}me 38.

\begin{lemma}
\label{j83}Let $A$ be a CM abelian variety over $\mathbb{Q}{}^{\mathrm{al}}$
with good reduction at $w_{0}$ to an abelian variety $A_{0}$ over
$\mathbb{F}{}$. The action of $L(A)/L(A_{0})$ on $\underline{\mathrm{End}%
}(h_{1}A)^{L(A_{0})}$ is faithful.
\end{lemma}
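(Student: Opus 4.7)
The plan is to exploit the fact that for CM abelian varieties the Lefschetz groups are tori inside the unit groups of CM algebras, and to reduce the statement to a double centralizer computation.

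Set $E:=C(A)$ and $E_{0}:=C(A_{0})$. Since $A$ and $A_{0}$ are CM, these centralizers are simply the centers of $\End^{0}(A)$ and $\End^{0}(A_{0})$ respectively, and are therefore CM algebras. The injection $\End^{0}(A)\hookrightarrow\End^{0}(A_{0})$ given by good reduction induces the reverse inclusion $E_{0}\subseteq E$ on centralizers. Choosing a polarization of $A$ whose reduction polarizes $A_{0}$ makes the two Rosati involutions (which on $E$ and $E_{0}$ are just complex conjugation) compatible, and then
\[
L(A)=\{e\in E^{\times} : e\bar{e}\in\mathbb{Q}^{\times}\}\quad\text{and}\quad L(A_{0})=L(A)\cap E_{0}^{\times}
\]
inside $\GL(V)$, where $V=H_{1}(A_{\mathbb{C}},\mathbb{Q})$. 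After choosing a fibre functor, $L(A)$ acts on $\underline{\End}(h_{1}A)\cong\End(V)$ by conjugation via this inclusion.

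I would first compute the $L(A_{0})$-invariants of $\End(V)$. The Lie algebra $\mathfrak{l}_{0}$ of the torus $L(A_{0})$ equals $\{x\in E_{0} : x+\bar{x}\in\mathbb{Q}\}$, which contains $\mathbb{Q}$ together with the anti-invariant part $E_{0}^{-}=\{x : \bar{x}=-x\}$. Since $(E_{0}^{-})^{2}=E_{0}^{+}$ in each simple CM-field factor of $E_{0}$, the Lie algebra $\mathfrak{l}_{0}$ generates $E_{0}$ as a $\mathbb{Q}$-algebra, and therefore
\[
\End(V)^{L(A_{0})}=\End_{E_{0}}(V).
\]
Next, the kernel of the conjugation action of $L(A)\subset E^{\times}$ on $\End_{E_{0}}(V)$ is $L(A)$ intersected with the centralizer of $\End_{E_{0}}(V)$ in $\End(V)$. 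Because $A_{0}$ is CM, $V$ decomposes along the simple isogeny factors of $A_{0}$ into summands each free over the corresponding CM-field factor of $E_{0}$, so the double centralizer theorem yields $C_{\End(V)}(\End_{E_{0}}(V))=E_{0}$. The kernel of the action thus equals $L(A)\cap E_{0}^{\times}=L(A_{0})$, giving the desired faithfulness.

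The two technicalities that need care are (i) the compatibility of the Rosati involutions on $E$ and $E_{0}$, which comes from taking a polarization of $A$ whose reduction polarizes $A_{0}$; and (ii) the freeness of $V$ over each CM-field factor of $E_{0}$, on which the double centralizer step rests (without it the bicommutant could be a proper subalgebra of $E_{0}$ and the argument would collapse). Both are standard consequences of the CM structure on $A$ and $A_{0}$, but they are precisely the places where the CM hypothesis is used, and (ii) is the step I expect to require the most explicit verification.
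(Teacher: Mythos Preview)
Your argument is correct and takes a genuinely different route from the paper's. The paper works entirely with characters: it observes that for a torus $T$ acting on $V$ through characters $\chi_1,\dots,\chi_n$ and a subtorus $L$, the quotient $T/L$ acts faithfully on $\End(V)^L$ provided the differences $\{\chi_i-\chi_j : \chi_i|_L=\chi_j|_L\}$ span $X^\ast(T/L)$. It then reduces to simple $A$, identifies the characters of $L(A)$ with the CM-types in a Galois orbit $\Psi$ and those of $L(A_0)$ with the associated Weil germs $\pi(\psi)$ via the Shimura--Taniyama formula, groups the $\psi_i$ according to equal $\pi(\psi_i)$, and checks the spanning condition by an explicit telescoping. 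Your proof bypasses all of this combinatorics: you recognise $L(A)\subset E^\times$, $L(A_0)=L(A)\cap E_0^\times$, and then use only the trivial inclusion $\End_{E_0}(V)\subset\End(V)^{L(A_0)}$ together with the bicommutant $C_{\End(V)}(\End_{E_0}(V))=E_0$ to squeeze the kernel between $L(A_0)$ and $L(A)\cap E_0^\times=L(A_0)$. This is cleaner and avoids any reduction to the simple case or any appeal to Shimura--Taniyama; the paper's approach, on the other hand, makes the combinatorics of CM-types versus Weil germs visible, which is useful elsewhere in the article.

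Two small remarks on your write-up. First, the equality $\End(V)^{L(A_0)}=\End_{E_0}(V)$ is slightly more than you need: since $\End_{E_0}(V)\subset\End(V)^{L(A_0)}$, the kernel of the $L(A)$-action on the latter is already contained in $C_{\End(V)}(\End_{E_0}(V))\cap L(A)=E_0\cap L(A)=L(A_0)$, so your Lie-algebra computation (which as stated only treats the CM-field factors of $E_0$ and would need a word when $A_0$ has a supersingular isogeny factor contributing a $\mathbb{Q}$-summand to $E_0$) can be dropped. Second, your concern (ii) about freeness is unnecessary: for a product of fields $E_0=\prod F_i$ acting faithfully on $V=\bigoplus V_i$, one has $\End_{E_0}(V)=\prod\End_{F_i}(V_i)$ and its centralizer in $\End(V)$ is $\prod F_i=E_0$ regardless of whether the $\dim_{F_i}V_i$ agree.
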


\begin{proof}
First an elementary remark. Let $T$ be a torus acting on a finite dimensional
vector space $V$, and let $L$ be a subtorus of $T$. Let $\chi_{1},\ldots
,\chi_{n}$ be the characters of $T$ occurring in $V$. Then $T$ acts faithfully
on $V$ if and only if $\chi_{1},\ldots,\chi_{n}$ span $X^{\ast}(T)$ as a
$\mathbb{Z}{}$-module --- assume this. The characters of $T$ occurring in
$\mathrm{End}(V)$ are $\{\chi_{i}-\chi_{j}\}$, and the set of those occurring
in $\mathrm{End}(V)^{L}$ is
\begin{equation}
\{\chi_{i}-\chi_{j}\mid\chi_{i}|L=\chi_{j}|L\}. \tag{*}%
\end{equation}
On the other hand,
\begin{equation}
X^{\ast}(T/L)=\left\{  \sum a_{i}\chi_{i}\,\middle|\,\sum a_{i}\chi
_{i}|L=0\right\}  . \tag{**}%
\end{equation}
Thus, $T/L$ will act faithfully on $\mathrm{End}(V)^{L}$ if the set (*) spans
the $\mathbb{Z}{}$-module (**).

It suffices to prove the lemma for a simple $A$. Let $K$ be a (sufficiently
large) CM field, Galois over $\mathbb{Q}{}$, with Galois group $\Gamma$. Then
$A$ corresponds to a $\Gamma$-orbit $\Psi$ of CM-types on $K$
(\cite{milne1999lm}, 2.3). Each $\psi\in\Psi$ defines a character of $L(A)$,
and $L(A)$ acts on $\omega_{\ell}(h_{1}A)$ through the $\psi$ in $\Psi$. For
$\psi\in\Psi$, let $\pi(\psi)$ denote the Weil germ attached to $\psi$ by the
Taniyama formula (\cite{tate1966e}, Thm 5).

The abelian variety $A_{0}$ is a isotypic${}$, and hence corresponds to a
$\Gamma$-orbit $\Pi$ of Weil germs (\cite{milne1999lm}, 4.1). In fact,
$\Pi=\{\pi(\psi)\mid\psi\in\Psi\}$ (ibid.~5.4). Each $\pi\in\Pi$ defines a
character of $X^{\ast}(L(A_{0})),$ and the homomorphism $X^{\ast
}(L(A))\rightarrow X^{\ast}(L(A_{0}))$ sends $\psi$ to $\pi(\psi)$. The
elements of $\Psi$ can be numbered $\psi_{1},\ldots,\psi_{n},\bar{\psi}%
_{n+1},\ldots,\bar{\psi}_{2n}$, in such a way that $\pi(\psi_{1})=\cdots
=\pi(\psi_{d})=\pi_{1}$, $\pi(\psi_{d+1})=\cdots=\pi(\psi_{2d})=\pi_{2},$
etc.. Now $\sum a_{i}\cdot\psi_{i}|L(A_{0})=\sum a_{i}\cdot\pi(\psi_{i})$,
which is zero if and only if $\sum_{i=1}^{d}a_{i}=0$, $\sum_{i=d+1}^{2d}%
a_{i}=0$, etc. But then
\[
\sum_{i=1}^{n}a_{i}\psi_{i}=\sum_{i=1}^{d}a_{i}(\psi_{i}-\psi_{1})+\cdots,
\]
which (by the remark) shows that
\[
S(A)/S(A_{0})\simeq L(A)/L(A_{0})
\]
acts faithfully on $\underline{\mathrm{End}}(h_{1}A)^{L(A_{0})}$.
\end{proof}

\begin{aside}
\label{j93}Strictly, the proof only shows that the Hodge standard conjecture
holds for a Lefschetz operator on $A_{0}$ coming from $A$. However, recall the
following theorem. Let $(A,\lambda)$ be a polarized abelian variety over
$\mathbb{F}$. For some discrete valuation ring $R$ containing the ring of Witt
vectors $W(\mathbb{F}{}){}$ and finite over $W(\mathbb{F}{})$, there exists a
polarized abelian scheme $(B,\mu)$ over $R$ whose generic fibre has complex
multiplication and whose special fibre is isogenous to $(A,\lambda)$. Indeed,
Mumford 1970, Corollary 1, p.~234, allows us to assume that the polarization
$\lambda$ is principal, in which case we can apply Zink 1983, 2.7 (with
$L=\mathbb{Q}$).
\end{aside}

\subsection{Proof of Theorem \ref{j0}}

Let $A$ and $B$ be as in the statement of Theorem \ref{j0}. As $A$ is neat but
$A\times B$ is not neat, there exists an embedding of $Q\overset{\df}{=}%
\End^{0}(B)$ into $\End^{0}(A)$ and an $m\in\mathbb{N}{}$ such $(A\times
B^{m},Q)$ is of Weil type and%
\[
1\rightarrow\Hg(A\times B)\longrightarrow S(A\times B)\overset{\rho
}{\longrightarrow}U_{Q}\rightarrow1
\]
is exact (\ref{j72}). Thus, the Hodge conjecture holds for the powers of
$A_{\mathbb{C}{}}\times B_{\mathbb{C}{}}$ if the Weil classes are algebraic
(Theorem \ref{j63}).

As $A$ and $B$ are neat, they satisfy the Mumford--Tate conjecture, and
therefore $A\times B$ does too. Thus, the Tate conjecture for the powers of
$A\times B$ follows from the Hodge conjecture for the powers of $(A\times
B)_{\mathbb{C}{}}$ (Theorem \ref{j92}). 

As $A_{0}$ is neat but $A_{0}\times B_{0}$ is not neat, $B_{0}$ must be
ordinary. Now the same argument as in the proof of Proposition \ref{j72} gives
us an exact sequence%
\[
1\rightarrow P^{\prime}(A\times B)\rightarrow S(A\times B)\rightarrow
U\rightarrow1
\]
in which $U$ is one-dimensional and the projection $S(B)\rightarrow U_{0}$ is
surjective. Thus, we have a commutative diagram with exact rows,%
\[
\begin{tikzcd}
1\arrow{r}  &\Hg(A\times B)\arrow{r}& S(A)\times S(B)\arrow{r}{\rho} & U_Q\arrow{r}&1\\
1\arrow{r}
&P^{\prime}(A_{0}\times B_0)\arrow{r}\arrow{u}
&S(A_{0})\times S(B_0)\arrow{r}\arrow{u}
&U_{0}\arrow{r}\arrow{u} &1.
\end{tikzcd}
\]
The maps $S(B_{0})\rightarrow S(B)\rightarrow U$ are isomorphisms. A diagram
chase now shows that $U_{0}\rightarrow U$ is injective, and so
\[
P^{\prime}(A_{0}\times B_{0})=S(A_{0}\times B_{0})\cap\Hg(A\times B).
\]
Thus (*) holds. As the Hodge conjecture holds for the powers of $A\times B$,
the Tate, folklore, and Hodge standard conjecture hold for the powers of
$A_{0}\times B_{0}$ (Theorem \ref{j11}).

\section{Some examples}

We list a few examples.

\subsection{Products of elliptic curves}

\begin{theorem}
\label{j56}Let $A$ be a product of elliptic curves over an algebraically
closed field $k$.

(a) If $k$ has characteristic zero, then $A$ is neat; in particular, it
satisfies the Hodge conjecture.

(b) If $k$ is the algebraic closure of a field finitely generated over
$\mathbb{Q}$, then $A$ is neat; in particular, it satisfies the Tate conjecture.

(c) If $k=\mathbb{F}$, then $A$ is neat; in particular, it satisfies the Tate conjecture.

(d) For any field $k$, $A$ satisfies the Hodge standard conjecture.
\end{theorem}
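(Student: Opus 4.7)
The plan is to dispatch (a)--(c) by reducing to what is already catalogued in the paper and the Mumford--Tate story for products of elliptic curves, and to handle (d) by lifting to characteristic zero to invoke Corollary~\ref{j17}(d).

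For (a), I base-change to $\mathbb{C}{}$ and write $A \simeq \prod_i E_i^{n_i}$ with the $E_i$ pairwise non-isogenous. Each $E_i$ is individually neat: $\Hg(E_i) = S(E_i)$ equals $\SL_2$ if $E_i$ is not CM, and equals $U_{K_i}$ if $E_i$ has CM by the imaginary quadratic field $K_i$. The formulas listed after Proposition~\ref{j65} give $S(A) = \prod_i S(E_i)$ and present $\Hg(A)$ as a subproduct of $\prod_i \Hg(E_i)$. Pairwise non-isogenous elliptic curves have Hodge groups with no nontrivial common quotients (two CM elliptic curves with the same CM field are isogenous over $\mathbb{C}{}$, and the graph of an isomorphism between two copies of $\SL_2$ would likewise force an isogeny), so Goursat's lemma forces $\Hg(A)$ to be the full product. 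Hence $\Hg(A) = S(A)$, so $A$ is neat by Proposition~\ref{j65} and the Hodge conjecture follows from Lefschetz.

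For (b), embed $k \hookrightarrow \mathbb{C}{}$ and note that the Mumford--Tate conjecture is known for each elliptic curve (Serre in the non-CM case, Shimura--Taniyama in the CM case) and is preserved under products by Vasiu--Commelin, so it holds for $A$. Theorem~\ref{j92} then reduces the Tate conjecture for $A$ to the Hodge conjecture for $A_{\mathbb{C}{}}$, which is handled in (a). For (c), the paper records that products of elliptic curves over $\mathbb{F}{}$ are neat, so $P(A) = L(A)$ by Theorem~\ref{j9}; the Tate conjecture then follows from Tate's theorem that nonexotic Tate classes on abelian varieties are algebraic.

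For (d), characteristic zero is classical Hodge theory. In characteristic $p$ I would first reduce to $k = \mathbb{F}{}$ by spreading $A$ out over a finitely generated $\mathbb{F}_p$-algebra (together with an ample divisor) and specializing at a closed point, obtaining $A_0/\mathbb{F}{}$, again a product of elliptic curves: specialization is injective on cycles modulo numerical equivalence and preserves the Lefschetz pairing, so positive definiteness on the primitive classes of $A_0$ restricts to positive definiteness on those of $A$. It then suffices to treat $A_0 = \prod_i E_{0,i}^{n_i}$ over $\mathbb{F}{}$. Lift each $E_{0,i}$ to a CM elliptic curve $E_i$ over $\mathbb{Q}{}^{\mathrm{al}}$ with good reduction to $E_{0,i}$ at $w_0$ (Serre--Tate canonical lift for ordinary factors; a CM lift over a sufficiently large number field for supersingular ones), and put $A = \prod_i E_i^{n_i}$. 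Then $A$ is neat by (a) and $A_0$ is neat by (c), so Corollary~\ref{j17}(d) delivers the Hodge standard conjecture for $A_0$ and its powers.

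The main obstacle I anticipate is the specialization step in (d): one must arrange the spreading-out compatibly with a polarization so that the Lefschetz operator used on $A_0$ genuinely restricts to the one on $A$ governing the primitive decomposition, and one must choose the lifts $E_i$ of the $E_{0,i}$ consistently so that a single prime $w_0$ of $\mathbb{Q}{}^{\mathrm{al}}$ realizes all of them as good reductions at once (possibly after enlarging the CM fields involved). Both issues look technical but tractable, and the core content of the theorem is contained in parts (a) and (c) together with Corollary~\ref{j17}(d).
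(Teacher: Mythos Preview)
Your proposal is correct and follows essentially the same route as the paper: Goursat for (a), the Mumford--Tate conjecture to reduce (b) to (a), the known neatness result over $\mathbb{F}$ for (c), and specialization plus CM lifting into Corollary~\ref{j17}(d) for (d). The only noteworthy divergence is in (a): the paper first treats the CM case by a different trick, choosing a prime $p$ of good ordinary reduction so that $\End^{0}(A)=\End^{0}(A_{0})$, and then pulls neatness back from (c) via Corollary~\ref{j17}(c); your uniform Goursat argument covers this case too (and is what the paper invokes for the general case, citing Imai), so nothing is lost, but the paper's detour illustrates how the characteristic-$p$ result feeds back into characteristic zero.
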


\begin{proof}
(a) Let $A=A_{1}\times\cdots\times A_{m}$. Suppose first that the $A_{i}$ are
CM. Choose a prime number $p$ such that the $A_{i}$ have good ordinary
reduction at $p$. Then the statement follows from (c) and \ref{j17}(c).

As elliptic curves are neat, in the general case it suffices to prove that, if
$A_{1},\ldots,A_{m}$ are elliptic curves no two of which are isogenous, then%
\begin{equation}
\Hg(A_{1}\times\cdots\times A_{m})=\Hg(A_{1})\times\cdots\times\Hg(A_{m}).
\label{j99}%
\end{equation}
This can be proved using Goursat's lemma (\cite{imai1976}, p.~367).

(b) As the Mumford--Tate conjecture is known in this case, the statement
follows from (a).

(c) This is the main theorem of \cite{spiess1999}.

(d) By a specialization argument, it suffices to prove this over $\mathbb{F}%
{}$. Given a product of elliptic curves over $\mathbb{F}{}$, we may lift it to
a product of CM elliptic curves over $\mathbb{Q}^{\mathrm{al}},$ and apply
\ref{j17}(d).
\end{proof}

\subsection{Abelian surfaces}

Abelian surfaces are neat, both over $\mathbb{F}$ and over fields of
characteristic zero. Thus, in characteristic zero, powers of abelian surfaces
satisfy the Hodge and Tate conjectures, and, in characteristic $p$, they
satisfy the Tate and standard conjectures (see Corollary \ref{j17}).

\subsection{Abelian threefolds}

In characteristic zero, abelian threefolds are neat. Over $\mathbb{F}$,
abelian threefolds satisfy the Tate conjecture, and they are neat except in
the following case: $E\overset{\df}{=}\End^{0}(A_{0})$ is a field containing
an imaginary quadratic field $Q$ and $\Nm_{E/Q}(\pi_{A}^{2}/q)=1$. See
\cite{zarhin1994}.

Let $A_{0}$ be an exceptional abelian threefold over $\mathbb{F}{}$, and let
$B_{0}$ be an elliptic curve over $\mathbb{F}{}$ with $Q=\End^{0}(B_{0})$.
There exist lifts $A$ and $B$ of $A_{0}$ and $B_{0}$ (up to isogeny) as in
1.12. Therefore, the Tate and standard conjectures hold for the varieties
$A^{r}\times B^{s}$ (see 3.6).

\subsection{Products of threefolds and elliptic curves over $\mathbb{F}{}$}

\begin{theorem}
\label{j90}Let $A_{0}$ (resp.~$B_{0})$ be a simple abelian threefold
(resp.~elliptic curve) over $\mathbb{F}{}$. If $A_{0}$ is neat, then the Tate
and standard conjectures hold for the abelian varieties $A_{0}^{r}\times
B_{0}^{s}$ ($r,s\in\mathbb{N}{})$.
\end{theorem}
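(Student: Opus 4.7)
The plan is to lift to characteristic zero and invoke Theorem~\ref{j11}. Using Aside~\ref{j93}, I would lift $A_{0}$ and $B_{0}$ up to isogeny to CM abelian varieties $A$ and $B$ over $\mathbb{Q}^{\mathrm{al}}$ with good reduction, choosing the lift of $A_{0}$ so as to preserve any imaginary quadratic subfield of the centre of $\End^{0}(A_{0})$. Then $A$ is a simple CM abelian threefold, hence neat (every abelian threefold in characteristic zero is), and $B$ is a neat CM elliptic curve. By Corollary~\ref{j74}, the Hodge conjecture holds for all powers of $A\times B$.

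The remaining task is to verify condition~(\ref{blah}) for the pair $(A\times B,A_{0}\times B_{0})$, and this splits into two cases. If $A_{0}\times B_{0}$ is neat, then the Tate conjecture for its powers is immediate from Tate's theorem, and (\ref{blah}) follows from Corollary~\ref{j17}(b); Theorem~\ref{j11}(b) then delivers the Hodge standard conjecture. If $A_{0}\times B_{0}$ is not neat, I would follow the pattern of the proof of Theorem~\ref{j0}: the characteristic-$p$ analog of Proposition~\ref{j72} used there forces $B_{0}$ to be ordinary with $Q\overset{\df}{=}\End^{0}(B_{0})$ imaginary quadratic, embeds $Q$ into $\End^{0}(A_{0})$ (hence, by the choice of lift, into $\End^{0}(A)$), and produces an exact sequence for $P'(A_{0}\times B_{0})$ parallel to the one for $\Hg(A\times B)$ coming from Proposition~\ref{j72}. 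A diagram chase between the two sequences, exactly as in the proof of Theorem~\ref{j0}, then yields~(\ref{blah}).

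Since $A$ is a simple neat threefold, the representation of $Q\otimes\mathbb{C}$ on $\Tgt_{0}(A)$ is of the form $2\rho\oplus\bar{\rho}$, so $m=1$ in Proposition~\ref{j72} and $(A\times B,Q)$ is a Weil-type fourfold. By Proposition~\ref{j7} we may choose a polarization of determinant~$1$, and Markman's Theorem~\ref{j14} then shows that the Weil classes are algebraic, giving the Hodge conjecture for powers of $A\times B$ (Corollary~\ref{j74}). With~(\ref{blah}) and this algebraicity in hand, Theorem~\ref{j11} (equivalently, Theorem~\ref{j0}(2)(c),(d)) yields the Tate, folklore, and Hodge standard conjectures for all $A_{0}^{r}\times B_{0}^{s}$. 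The main technical obstacle is the diagram chase in the non-neat case, which parallels the one at the end of the proof of Theorem~\ref{j0}, together with the need to arrange the lift so that the imaginary quadratic subfield detected in characteristic $p$ is actually present in $\End^{0}(A)$.
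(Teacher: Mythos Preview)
Your approach matches the paper's: lift $A_{0},B_{0}$ to CM abelian varieties over $\mathbb{Q}^{\mathrm{al}}$ and split into cases. The paper divides into four cases according to whether each of $A\times B$ and $A_{0}\times B_{0}$ is neat, handles the two ``pure'' cases via Theorem~\ref{j0} and Corollary~\ref{j17}, and leaves the mixed cases as an exercise; your two-case split (on $A_{0}\times B_{0}$ alone) absorbs one of those exercise cases into the neat branch via Corollary~\ref{j74} and Theorem~\ref{j11}, and the lift-compatibility issue you flag at the end is exactly what the other exercise case requires.
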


\begin{proof}
Lift $A_{0}$ and $B_{0}$ (up to isogeny) to abelian varieties $A$ and $B$ over
$\mathbb{Q}{}^{\mathrm{al}}$. If neither $A\times B$ nor $A_{0}\times B_{0}$
is neat, then we can apply Theorem \ref{j0} to prove the statement. If $A\times
B$ and $A_{0}\times B_{0}$ are both neat, we can apply Corollary \ref{j17}.
We leave the remaining cases as an exercise.
\end{proof}

\subsection{Simple abelian fourfolds}

\begin{theorem}
\label{j91}Let $A$ be a CM abelian fourfold over $\mathbb{Q}^{\mathrm{al}}$
with good reduction at $w_{0}$ to an abelian variety $A_{0}$ over
$\mathbb{F}{}$. Suppose that $\End^{0}(A_{0})=\mathbb{Q}[\pi_{A}]$ and that
$\pi_{A}$ generates $\MT(A)$. If either

\begin{enumerate}
\item $A$ is not of Weil type, or

\item $A$ is of Weil type relative to $Q\subset\mathbb{Q}{}[\pi_{A}]$ and
$\det(A,Q,\lambda)=1$ for some polarization $\lambda$,
\end{enumerate}

\noindent then the Hodge conjecture holds for the powers of $A_{\mathbb{C}}$ and the Tate
and standard conjectures hold for the powers of $A_{0}$.
\end{theorem}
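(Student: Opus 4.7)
The plan is to verify the two hypotheses of Theorem~\ref{j11} for $A$ and then invoke it. The first step is to pin down the endomorphism algebras: because $\End^{0}(A_{0})=\mathbb{Q}[\pi_{A}]$ is a field of $\mathbb{Q}{}$-dimension at most $2\dim A_{0}=8$, and reduction gives an inclusion $\End^{0}(A)\hookrightarrow\End^{0}(A_{0})$ in which the left-hand side contains a CM subalgebra of degree $8$ (as $A$ is CM), we must have $\End^{0}(A)=\End^{0}(A_{0})=E$, a CM field of degree $8$ generated by $\pi_{A}$; in particular, both $A$ and $A_{0}$ are simple. The Rosati involution restricts to complex conjugation on $E$, so the definitions of $L(A)$ and $L(A_{0})$ produce the same algebraic $\mathbb{Q}$-torus, and $L(A)=L(A_{0})$.

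With this identification in hand, I verify condition~(\ref{blah}). By the remark following Theorem~\ref{j11}, since $L(A_{0})=L(A)$, (\ref{blah}) is equivalent to $\pi_{A}$ generating $\MT(A)$, which is one of our hypotheses. Concretely, some power $\pi_{A}^{n}\in P(A_{0})\subset\MT(A)$, the generation hypothesis gives $\MT(A)\subset P(A_{0})$, and the intersection $L(A_{0})\cap\MT(A)$ therefore collapses to $P(A_{0})$. Granted (\ref{blah}), Theorem~\ref{j11} reduces the problem to proving the Hodge conjecture for $A$ and its powers: once that is done, the Tate, folklore, and Hodge standard conjectures for the powers of $A_{0}$ follow automatically.

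I handle the two cases separately. In case~(a), $A$ is a simple abelian fourfold that is not of Weil type; Summary~\ref{j84} then says $A$ is neat, and the Hodge conjecture for the powers of $A$ is immediate from Lefschetz's theorem. In case~(b), Markman's Theorem~\ref{j14} gives that the Weil classes on $(A,Q,\lambda)$ are algebraic. To push this to the Hodge conjecture for the powers of $A$ via Theorem~\ref{j63}, I must show that $(A,Q)$ is almost-neat, i.e., that the sequence $1\to\MT(A)\to L(A)\overset{\rho}{\to} U_{Q}\to 1$ is exact.

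This almost-neatness step is where the main difficulty lies. The inclusion $\MT(A)\subset\ker(\rho)$ is free, since the Weil classes are Hodge, so only $\ker(\rho)\subset\MT(A)$ remains. Here I would exploit the identification $\MT(A)=P(A_{0})$ established above and combine it with a character-group calculation along the lines of Lemma~\ref{j83}: the hypothesis $\End^{0}(A_{0})=\mathbb{Q}[\pi_{A}]$ says that no proper intermediate CM subfield of $E$ absorbs the action of $\pi_{A}$, and together with the freeness of $\Tgt_{0}(A)$ over $Q\otimes\mathbb{C}{}$ this should force the CM-type of $A$ to be $Q$-primitive, so that the Zariski closure of $\langle\pi_{A}\rangle$ fills out all of $\ker(\rho)$. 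Granting this, Theorem~\ref{j63} furnishes the Hodge conjecture for $A_{\mathbb{C}{}}$ and its powers, and Theorem~\ref{j11} then completes the argument.
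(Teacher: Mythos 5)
Your overall plan is the same as the paper's: establish $\End^{0}(A)=\End^{0}(A_{0})=\mathbb{Q}[\pi_{A}]$, deduce $L(A)=L(A_{0})$, verify condition~(\ref{blah}) via the remark after Theorem~\ref{j11}, reduce everything to the Hodge conjecture for the powers of $A$, and split into cases. The first three of these steps are fine, and so is case~(a): $A$ is simple (since $\End^{0}(A)$ is a CM field of degree $8$), so the dichotomy in Summary~\ref{j84} forces $A$ to be neat when it is not of Weil type.

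Case~(b) has a genuine gap. You correctly identify that to invoke Theorem~\ref{j63} you must prove that $(A,Q)$ is almost-neat, i.e., $\Hg(A)=\ker(\rho)$. But the argument you offer for this is both vague (``this should force the CM-type of $A$ to be $Q$-primitive'') and, more importantly, pointed in the wrong direction. You are trying to extract almost-neatness from the Frobenius hypotheses $\End^{0}(A_{0})=\mathbb{Q}[\pi_{A}]$ and ``$\pi_{A}$ generates $\MT(A)$.'' Those hypotheses carry \emph{arithmetic} information about the reduction and serve exactly one purpose in this theorem: they give $L(A)=L(A_{0})$ and~(\ref{blah}), which is what is needed to feed $A$ into Theorem~\ref{j11}. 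They say nothing about the Hodge group of $A$ beyond what~(\ref{blah}) already records, and in particular they do not by themselves constrain $\dim\Hg(A)$. A statement like ``the Zariski closure of $\langle\pi_{A}\rangle$ fills out all of $\ker(\rho)$'' would need to be proved, and the tools you gesture at (no intermediate CM field absorbing $\pi_{A}$, freeness of $\Tgt_{0}(A)$) are not the right ones.

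The correct reason $(A,Q)$ is almost-neat is purely a characteristic-zero structural fact about simple CM abelian fourfolds, independent of the chosen reduction: since $\End^{0}(A)$ is a CM field of degree $2\dim A=8$, Ribet's lower bound (\cite{ribet1983}) gives $\dim\Hg(A)\geq 1+\log_{2}(\dim A)=3$. The Weil classes on $(A,Q)$ are Hodge, so $\Hg(A)\subset\ker(\rho)$, and by Lemma~\ref{j58} the kernel $\ker(\rho)=U_{E/Q}$ is a connected torus of dimension exactly $3$. Hence $\Hg(A)=\ker(\rho)$, which is almost-neatness. Replacing your Lemma-\ref{j83}-style character computation with this dimension count would close the gap and align your proof with what the paper's terse argument is implicitly relying on.
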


\begin{proof}
The hypotheses imply that $L(A_{0})=L(A)$ and that (\ref{blah}) holds. Under
either (a) or (b), the Hodge conjecture holds for the powers of $A$, and so we
can apply Theorem \ref{j11}.
\end{proof}

\subsection{General abelian varieties of Weil type}

Let $(A,Q,\lambda)$ be a general abelian variety of Weil type over
$\mathbb{Q}^{\mathrm{al}}$. We saw in Theorem \ref{j52} that the Hodge
conjecture holds for the powers of $A$ if the Weil classes on $(A,Q)$ are
algebraic. I expect that if $A$ has good reduction to an abelian variety
$A_{0}$ over $\mathbb{F}{}$, then (*) holds and the Tate and standard
conjectures hold for the powers of $A_{0}$ under the same hypothesis on the
Weil classes. Similar statements should hold for products of general abelian
varieties of Weil type (see \ref{j80}).

\appendix

\section{Appendix}

We add some commentary and details.

\subsection{Proof of the tannakian property of $M(A)$}

Let $A$ be an abelian variety over $\mathbb{C}{}$, and let $M(A)$ denote the
algebraic subgroup of $L(A)$ fixing the algebraic classes in
$H\overset{\df}{=}\bigoplus_{r,s}H^{2r}(A^{s},\mathbb{Q}{})(r)$. Then every
element of $H$ fixed by $M(A)$ is algebraic.

In the 1980s, this statement was considered to be beyond reach; in 1990s, it
was considered obvious. What changed was that Jannsen proved that the ring of
correspondences modulo numerical equivalence is semisimple (see below).

Before proving the statement, I illustrate its importance. Let $A$ be an
abelian variety, and let $G$ by the subgroup of $\GL_{V(A)}$ fixing some
algebraic classes on $A$. Then \textit{every} cohomology class fixed by $G$ is
algebraic. This follows from the fact that $G$ (obviously) contains $M(A)$.
Note that this application does not require us to know $M(A)$.

Let $\Mot(\mathbb{C})$ denote the category of abelian motives over
$\mathbb{C}{}$ modulo numerical equivalence. This is a pseudo-abelian tensor
category, and Jannsen's result shows that it is abelian. As the folklore
conjecture is known for abelian varieties over $\mathbb{C}{}$, there is a
Betti fibre functor $\omega_{B}$ on $\Mot(\mathbb{C})$. Therefore
$\Mot(\mathbb{C})$ is a neutral tannakian category. Let $\langle
A\rangle^{\otimes}$ denote the tannakian subcategory of $\langle
A\rangle^{\otimes}$ generated by $A$ and $\mathbb{P}{}^{1}$. Then
$M(A)=\underline{\Aut}^{\otimes}(\omega_{B}|\langle A\rangle^{\otimes})$, and
the above statement is simply an expression of tannakian duality.

The same statement holds over $\mathbb{F}$ when we replace Betti cohomology
with $\ell$-adic cohomology with $\ell\in s(A)$ (see \ref{j35}).

\subsection{Proof that the ring of correspondences is semisimple}

We prove that the existence of a Weil cohomology theory implies that the ring
of correspondences modulo numerical equivalence of an algebraic variety is semisimple.

Theorem \ref{b4} below is extracted from \cite{jannsen1992}. Throughout, $H$
is a Weil cohomology theory with coefficient field $Q$. We let $A_{H}^{r}(X)$
denote the $\mathbb{Q}{}$-subspace of $H^{2r}(X)(r)$ spanned by the algebraic
classes, and $A_{\mathrm{num}}^{r}(X)$ the quotient of $A_{H}^{r}(X)$ be the
left kernel of the intersection pairing%
\[
A_{H}^{r}(X)\times A_{H}^{d-r}(X)\rightarrow A_{H}^{d}(X)\simeq\mathbb{Q}.
\]

\begin{plain}
\label{b1}The $\mathbb{Q}{}$-vector space $A_{\mathrm{num}}^{r}(X)$ is
finite-dimensional over $\mathbb{Q}{}$: if $f_{1},\ldots,f_{s}\in A_{H}%
^{d-r}(X)$ span the subspace $Q\cdot A_{H}^{d-r}(X)$ of $H^{2d-2r}(X)(d-r)$,
then the map%
\[
x\mapsto(x\cdot f_{1},\ldots,x\cdot f_{s})\colon A_{H}^{r}(X)\rightarrow
\mathbb{Q}{}^{s}%
\]
has image $A_{\mathrm{num}}^{d-r}(X)$.
\end{plain}

\begin{plain}
\label{b2}Let $A_{H}^{r}(X,Q)=Q\cdot A_{H}$. Define $A_{\mathrm{num}}%
^{r}(X,Q)$ to be the quotient of $A_{H}^{r}(X,Q)$ by the left kernel of the
pairing
\[
A_{H}^{r}(X,Q)\times A_{H}^{d-r}(X,Q)\rightarrow A_{H}^{d}(X,Q)\simeq Q
\]
induced by cup product. Then $A_{H}^{r}(X)\rightarrow A_{\mathrm{num}}%
^{r}(X,Q)$ factors through $A_{\mathrm{num}}^{r}(X)$,%
\[
\begin{tikzcd}
A_{H}^{r}(X) \arrow{r}\arrow{d} &A_{\mathrm{num}}^{r}(X)\arrow[dashed]{d}\\
A_{H}^{r}(X,Q) \arrow{r} &A_{\mathrm{num}}^{r}(X,Q),
\end{tikzcd}
\]
and I claim that the map
\[
Q\otimes A_{\mathrm{num}}^{r}(X)\rightarrow A_{\mathrm{num}}^{r}(X,Q)
\]
is an isomorphism. As $A_{\mathrm{num}}^{r}(X,Q)$ is spanned by the image of
$A_{H}^{r}(X)$, the map is obviously surjective. Let $e_{1},\ldots,e_{m}$ be a
$\mathbb{Q}{}$-basis for $A_{\mathrm{num}}^{r}(X)$, and let $f_{1}%
,\ldots,f_{m}$ be the dual basis in $A_{\mathrm{num}}^{d-r}(X)$. If
$\sum_{i=1}^{m}a_{i}e_{i}$, $a_{i}\in Q$, is zero in $A_{\mathrm{num}}%
^{r}(X,Q)$, then $a_{j}=(\sum a_{i}e_{i})\cdot f_{j}=0$ for all $j$.
\end{plain}

\begin{plain}
\label{b3}Recall that the \emph{radical} $R(A)$ of a ring $A$ is the
intersection of the maximal left ideals in $A$. Equivalently it is the
intersection of the annihilators of simple $A$-modules. It is a two-sided
ideal in $A$. Every left (or right) nil ideal\footnote{An ideal is nil if all
of its elements are nilpotent. A finitely generated nil ideal is nilpotent.}
is contained in $R{}(A)$. For any ideal $\mathfrak{a}{}$ of $A$ contained in
$R{}(A)$, $R{}(A/\mathfrak{a}{})=R{}(A)/\mathfrak{a}{}$. The radical of an
artinian ring $A$ is nilpotent, and it is the largest nilpotent two-sided
ideal in $A$. (Bourbaki, A, VIII, \S 6.)
\end{plain}

\begin{theorem}
[Jannsen]\label{b4}Let $X$ be a smooth projective variety over a field $k$.
Then the $\mathbb{Q}$-algebra $A_{\mathrm{num}}^{\ast}(X\times X)$ is semisimple.
\end{theorem}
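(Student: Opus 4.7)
The plan is to deduce semisimplicity from the vanishing of the Jacobson radical. By (b1) applied to $X\times X$, the graded $\mathbb{Q}$-algebra $A := A_{\mathrm{num}}^{*}(X\times X)$ (with composition as product, grading $\Corr^{p} := A_{\mathrm{num}}^{d+p}(X\times X)$ where $d=\dim X$, and unit $[\Delta_{X}] \in \Corr^{0}$) is finite-dimensional. A finite-dimensional $\mathbb{Q}$-algebra is semisimple if and only if its Jacobson radical $R$ vanishes; because $A$ is $\mathbb{Z}$-graded, $R$ is a graded two-sided ideal, so it suffices to check that every homogeneous $f \in R\cap \Corr^{p}$ is zero.

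Fix such an $f$ and any $g \in \Corr^{-p} = A_{\mathrm{num}}^{d-p}(X\times X)$. Then $f\circ g$ lies in $R \cap A_{\mathrm{num}}^{d}(X\times X)$, hence is nilpotent (the radical of an Artinian ring is a nilpotent ideal; see (b3)). After extending scalars from $\mathbb{Q}$ to the coefficient field $Q$ of $H$ via the isomorphism in (b2), the class $f\circ g$ acts as a nilpotent endomorphism of the $Q$-vector space $\bigoplus_{i} H^{i}(X)$, and so has cohomological trace zero. The Lefschetz fixed-point formula, a consequence of the axioms of a Weil cohomology, gives
\[
\sum_{i} (-1)^{i} \Tr\bigl(h \mid H^{i}(X)\bigr) = (h \cdot \Delta_{X})_{X\times X} \quad \text{for all } h\in A_{H}^{d}(X\times X),
\]
and the standard identity $(f\circ g)\cdot \Delta_{X} = (f \cdot {}^{t}g)_{X\times X}$ for correspondences then yields $(f \cdot {}^{t}g)_{X\times X} = 0$ in $Q$ for every $g$.

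Since ${}^{t}g$ ranges over all of $A_{H}^{d-p}(X\times X)$ as $g$ does, the intersection pairing
\[
A_{H}^{d+p}(X\times X, Q) \times A_{H}^{d-p}(X\times X, Q) \to Q
\]
sends $(f, \cdot)$ to zero identically. Descending to $\mathbb{Q}$-coefficients via (b2) and invoking the very definition of numerical equivalence (that the left kernel of the intersection pairing is, by construction, zero in $A_{\mathrm{num}}$), we conclude $f=0$ in $A_{\mathrm{num}}^{d+p}(X\times X)$. The principal obstacle is bookkeeping rather than a single hard step: one must verify that the Jacobson radical of a $\mathbb{Z}$-graded finite-dimensional algebra is graded (a standard lemma), handle the scalar extension carefully using (b2), and invoke the Lefschetz trace formula intrinsic to any Weil cohomology. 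Jannsen's conceptual insight is precisely that this trace-theoretic input, combined with the \emph{definition} of numerical equivalence, is exactly what is needed to force the radical to collapse.
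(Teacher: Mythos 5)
There is a genuine gap at the crucial step. You take $f$ in the radical of $A_{\mathrm{num}}^{*}(X\times X)$, observe that $f\circ g$ lies in the radical of the Artinian ring and hence is nilpotent \emph{in $A_{\mathrm{num}}$}, and then assert that ``the class $f\circ g$ acts as a nilpotent endomorphism of $\bigoplus_{i}H^{i}(X)$.'' But a numerical equivalence class does not act on cohomology; only a lift to $A_{H}$ does, and nilpotence of $f\circ g$ modulo numerical equivalence does not force a lift $\widetilde{f}\circ\widetilde{g}$ to be nilpotent as an operator. Concretely, $(f\circ g)^{n}=0$ in $A_{\mathrm{num}}$ only gives $\Tr\bigl((\widetilde{f}\widetilde{g})^{k}\bigr)=0$ for $k\geq n$ via the trace formula, which does not imply $\Tr(\widetilde{f}\widetilde{g})=0$; an operator can have vanishing high power traces without having trace zero, unless one knows vanishing for \emph{all} $k\geq1$. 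So the ``therefore has cohomological trace zero'' does not follow.

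The paper avoids this by running the nilpotence argument in the right place. It passes to $A\overset{\df}{=}A_{H}^{*}(X\times X,Q)$, uses (\ref{b2}) to get a surjection $A\twoheadrightarrow Q\otimes B$ with $B=A_{\mathrm{num}}^{*}(X\times X)$, and uses (\ref{b3}) to lift $1\otimes f$ to an element $f^{\prime}$ of the radical $R(A)$. (The step behind (\ref{b3}) is that the kernel of $A\to Q\otimes B$ is a nil ideal, hence contained in $R(A)$; this is itself verified by the trace formula, since any numerically trivial $h$ has $\Tr(h^{k})=0$ for all $k\geq1$.) Now $f^{\prime}\circ g\in R(A)$ is nilpotent in $A$, i.e., genuinely nilpotent as an operator on $H^{*}(X)$, so the alternating trace vanishes and the Lefschetz formula gives $\langle f^{\prime}\cdot g^{t}\rangle=0$ for all $g\in A$, whence $f=0$. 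Your graded reduction and the use of the intersection pairing are fine as bookkeeping, but the lift into $R(A_{H})$ is the essential idea that your argument omits, and without it the trace-zero claim fails.
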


\begin{proof}
Let $B=A_{\mathrm{num}}^{\ast}(X\times X)$. Recall that $B$ has finite
dimension over $\mathbb{Q}{}$, and that multiplication in $B$ is composition
$\circ$ of correspondences. By definition of numerical equivalence, the
pairing%
\[
f,g\mapsto\langle f\cdot g\rangle\colon B\times B\rightarrow\mathbb{Q}{}%
\]
is nondegenerate. Let $f$ be an element of the radical $R{}(B)$ of $B$. We
have to show that $\langle f\cdot g\rangle=0$ for all $g\in B$.

Let $H$ be a Weil cohomology with coefficient field $Q$. Let $A=A_{H}^{\ast
}(X\times X,Q)$; then $A$ is a finite-dimensional $Q$-algebra, and there is a
surjective homomorphism
\[
A\overset{\df}{=}A_{H}^{d}(X\times X,Q)\rightarrow A_{\mathrm{num}}%
^{d}(X\times X,Q)\simeq Q\otimes B
\]
(see \ref{b2}). This maps the radical of $A$ onto that of $Q\otimes B$ (see
\ref{b3}). Therefore, there exists an $f^{\prime}\in R{}(A)$ mapping to
$1\otimes f$. For all $g\in A$,%
\begin{equation}
\langle f^{\prime}\cdot g^{t}\rangle=\sum\nolimits_{i}(-1)^{i}\Tr(f^{\prime
}\circ g\mid H^{i}(X)) \label{eq6}%
\end{equation}
(\cite{kleiman1968}, 1.3.6). As $f^{\prime}\circ g$ lies in $R{}(A)$, it is
nilpotent (see \ref{b3}), and so (\ref{eq6}) shows that $\langle f^{\prime
}\cdot g^{t}\rangle=0$.
\end{proof}

\subsection{Goursat's Lemma}

Let $B_{1},\ldots,B_{n}$ be algebraic groups.\footnote{That is, a group
schemes of finite type over a field.
\par
{}} A subdirect product of $B_{1}\times\cdots\times B_{n}$ is an algebraic
subgroup $A$ such that the projections $A\rightarrow B_{i}$ are all faithfully flat.

\begin{glemma}
\label{j20} Let $A$ be a subdirect product of $B_{1}\times B_{2}$, and let
$N_{1}$ and $N_{2}$ be the kernels of the projections $A\rightarrow B_{2}$ and
$A\rightarrow B_{1}$ regarded as (normal) algebraic subgroups of $B_{1}$
and~$B_{2}$. Let $\bar{A}$ be the image of $A$ in $B_{1}/N_{1}\times
B_{2}/N_{2}$. Then the projections $\bar{A}\rightarrow B_{1}/N_{1}$ and
$\bar{A}\rightarrow B_{2}/N_{2}$ are isomorphisms.
\end{glemma}

\begin{proof}
By symmetry, it suffices to show that the projection $\bar{A}\rightarrow
B_{1}/N_{1}$ is an isomorphism. It is a faithfully flat by hypothesis, and we
prove that it is injective by showing that the two homomorphisms
$A\rightarrow\bar{A}\rightarrow B_{1}/N_{1}$ and $A\rightarrow\bar{A}$ have
the same kernel. For a $k$-algebra $R$, the $R$-points of the kernels are%
\[
\left\{  (a_{1},a_{2})\in A(R)\mid a_{1}\in N_{1}(R)\right\}  \text{ and
}\{(a_{1},a_{2})\in A(R)\mid a_{1}\in N_{1}(R)\text{ and }a_{2}\in
N_{2}(R)\}.
\]
Let $(a_{1},a_{2})$ lie in the first kernel. To say that $a_{1}\in N_{1}(R)$
means that $(a_{1},1)\in\Ker(A(R)\rightarrow B_{2}(R))$. In particular,
$(a_{1},1)\in A(R)$. Thus $(1,a_{2})\in A(R)$, and so it lies in $N_{2}(R)$.
Hence $(a_{1},a_{2})\in N_{1}(R)\times N_{2}(R)$, and so it lies in the second kernel.
\end{proof}

The lemma says that the image $\bar{A}$ of $A$ in $B_{1}/N_{1}\times
B_{2}/N_{2}$ is the graph of an isomorphism $B_{1}/N_{1}\simeq B_{2}/N_{2}$.
If $B_{1}$ and $B_{2}$ are almost-simple, then either $A=B_{1}\times B_{2}$ or
it is the graph of an isogeny $B_{1}\rightarrow B_{2}$.

\begin{lemma}
\label{j21}Let $A$ be a normal algebraic subgroup of $B_{1}\times B_{2}$ such
that the projections $A\rightarrow B_{1}$ and $A\rightarrow B_{2}$ are
isomorphisms. Then $B_{1}$ and $B_{2}$ are commutative.
\end{lemma}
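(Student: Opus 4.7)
The plan is to exploit the assumption that both projections are isomorphisms to identify $A$ with the graph of an isomorphism, and then translate normality into commutation relations. Concretely, set $\phi = p_2 \circ p_1^{-1} \colon B_1 \to B_2$, which is an isomorphism of algebraic groups since $p_1$ and $p_2$ are. Then, on $R$-points for every $k$-algebra $R$,
\[
A(R) = \{(b,\phi(b)) \mid b \in B_1(R)\},
\]
i.e., $A$ is the graph of $\phi$.

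Next, I would write down what normality says in these terms. For any $(g_1,g_2) \in (B_1 \times B_2)(R)$ and any $b \in B_1(R)$, the conjugate
\[
(g_1,g_2)(b,\phi(b))(g_1,g_2)^{-1} = \bigl(g_1 b g_1^{-1},\, g_2 \phi(b) g_2^{-1}\bigr)
\]
must lie in $A(R)$, which by the graph description means
\[
\phi(g_1 b g_1^{-1}) = g_2 \phi(b) g_2^{-1}.
\]

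Finally, I would specialize this identity twice. Taking $g_1 = 1$ gives $g_2 \phi(b) g_2^{-1} = \phi(b)$ for all $b \in B_1(R)$ and $g_2 \in B_2(R)$; since $\phi$ is surjective on points (functorially, after passing to faithfully flat covers), this forces $B_2$ to be commutative. Taking $g_2 = 1$ gives $\phi(g_1 b g_1^{-1}) = \phi(b)$, and injectivity of $\phi$ yields $g_1 b g_1^{-1} = b$, so $B_1$ is commutative.

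There is no real obstacle here; the only mild point to watch is phrasing the argument scheme-theoretically (on functors of points, or equivalently invoking that an equality of morphisms of schemes can be checked on $R$-points for all $R$), so that the use of "$\phi$ is surjective" is unambiguous. Everything else is a two-line diagram chase.
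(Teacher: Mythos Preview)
Your proof is correct and follows essentially the same approach as the paper: identify $A$ with the graph of an isomorphism $\phi\colon B_1\to B_2$ and conjugate by elements of the form $(1,b_2)$ to see that $b_2$ centralizes every $\phi(b_1)$. The paper stops after deducing that $B_2$ is commutative (the statement for $B_1$ being immediate from $B_1\simeq B_2$), whereas you spell out the second specialization $g_2=1$ as well; also, since $\phi$ is an isomorphism of schemes it is already bijective on $R$-points, so the faithfully-flat caveat is unnecessary.
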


\begin{proof}
By assumption, $A$ is the graph of an isomorphism $\varphi\colon
B_{1}\rightarrow B_{2}$. Let $R$ be a $k$-algebra, and let $(b_{1}%
,\varphi(b_{1}))\in A(R)$. For all $b_{2}\in B_{2}(R)$, $(1,b_{2})$ normalizes
$(b_{1},\varphi(b_{1}))$, and so $b_{2}$ centralizes $\varphi(b_{1})$. This
shows that $B_{2}$ is commutative.
\end{proof}

\begin{lemma}
\label{j22}Let $A$ be a subdirect product of $B_{1}\times\cdots\times B_{n}$.
If $A$ is normal in $B_{1}\times\cdots\times B_{n}$ and the $B_{i}$ are
perfect, then $A=B_{1}\times\cdots\times B_{n}$.
\end{lemma}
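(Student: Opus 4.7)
The plan is to proceed by induction on $n$, with the case $n=2$ doing the real work and the general case following by a standard reduction.

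For $n=2$, I would apply Goursat's Lemma \ref{j20} to obtain normal subgroups $N_1 \triangleleft B_1$ and $N_2 \triangleleft B_2$ such that the image $\bar{A}$ of $A$ in $B_1/N_1 \times B_2/N_2$ is the graph of an isomorphism $\varphi \colon B_1/N_1 \xrightarrow{\sim} B_2/N_2$. Since $A$ is normal in $B_1 \times B_2$, the image $\bar{A}$ is normal in $B_1/N_1 \times B_2/N_2$, so Lemma \ref{j21} applies and forces $B_1/N_1$ and $B_2/N_2$ to be commutative. However, the quotient of a perfect algebraic group is perfect, so $B_i/N_i$ is simultaneously perfect and commutative, hence trivial. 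Therefore $N_i = B_i$ for $i=1,2$. By the definition of $N_1$ and $N_2$ as kernels of the projections from $A$, we have $N_1 \times \{1\} \subset A$ and $\{1\} \times N_2 \subset A$, so $A \supset N_1 \times N_2 = B_1 \times B_2$, giving equality.

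For the inductive step, assume the result for $n-1$ factors, and let $A \subset B_1 \times \cdots \times B_n$ be a normal subdirect product with each $B_i$ perfect. Let $\pi$ denote the projection onto the first $n-1$ factors and set $A' = \pi(A)$. Then $A'$ is a subdirect product of $B_1 \times \cdots \times B_{n-1}$ and is normal there, so by the inductive hypothesis $A' = B_1 \times \cdots \times B_{n-1}$. Viewing $A$ as a subgroup of the two-factor product $(B_1 \times \cdots \times B_{n-1}) \times B_n$, both projections are faithfully flat, so $A$ is a subdirect product. Since a product of perfect groups is perfect, the case $n=2$ applies and yields $A = B_1 \times \cdots \times B_n$.

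There is no real obstacle here; the statement is essentially a formal consequence of Goursat's Lemma together with Lemma \ref{j21}, and the only thing to check carefully is that perfectness passes to quotients and to finite products, which it does. The small subtlety worth flagging is that in the inductive step one must verify that $A$ remains normal when regarded inside the two-factor product, but this is immediate because normality is preserved under any regrouping of factors.
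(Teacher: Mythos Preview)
Your proof is correct and follows essentially the same approach as the paper: the base case $n=2$ via Goursat's Lemma and Lemma \ref{j21}, then induction by projecting onto the first $n-1$ factors and reducing to the two-factor case. The paper's argument is slightly terser but the logic is identical.
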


\begin{proof}
We first take $n=2$. With the notation of Goursat's lemma, the algebraic
subgroup $\bar{A}$ of $B_{1}/N_{1}\times B_{2}/N_{2}$ satisfies the hypotheses
of the last lemma, and so $B_{1}/N_{1}$ and $B_{2}/N_{2}$ are commutative,
hence trivial. As $A$ contains $N_{1}$ and $N_{2}$, it equals $B_{1}\times
B_{2}$.

We prove the general case by induction on $n$. On applying the induction
hypothesis to the image of $A$ in $B_{1}\times\cdots\times B_{n-1}$, we find
that the projection map $A\rightarrow B_{1}\times\cdots\times B_{n-1}$ is
faithfully flat. Now $A$ is a subdirect product of $(B_{1}\times\cdots\times
B_{n-1})\times B_{n}$, and the case $n=2$ shows that $A=B_{1}\times
\cdots\times B_{n}.$
\end{proof}

\begin{lemma}
\label{j23}Let $A$ be an algebraic subgroup of $B_{1}\times\cdots\times B_{n}%
$. If each $B_{i}$ is perfect and each projection $A\rightarrow B_{i}\times
B_{j}$, $1\leq i<j\leq n$, is faithfully flat, then $A=B_{1}\times\cdots\times
B_{n}$.
\end{lemma}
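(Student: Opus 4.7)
The plan is induction on $n$. The base case $n = 2$ is immediate: the hypothesis then asserts that the inclusion $A \hookrightarrow B_1 \times B_2$ is faithfully flat, forcing $A = B_1 \times B_2$.

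For the inductive step with $n \geq 3$, I aim to prove (i) the projection $A \to B_1 \times \cdots \times B_{n-1}$ is faithfully flat, and (ii) the subgroup $\{1\}^{n-1} \times B_n$ is contained in $A$. Together these yield $A = B_1 \times \cdots \times B_n$: given $(b_1, \ldots, b_n)$, fact (i) provides $a \in A$ with first $n-1$ coordinates $(b_1, \ldots, b_{n-1})$, and (ii) lets us correct the last coordinate. The enabling observation is that for any proper subset $S \subsetneq \{1, \ldots, n\}$ with $|S| \geq 2$, the image $A_S$ of $A$ in $\prod_{i \in S} B_i$ satisfies the hypothesis of the lemma: its pairwise projections $A_S \to B_i \times B_j$ are faithfully flat because the composition $A \to A_S \to B_i \times B_j$ agrees with the pairwise projection of $A$, so the inductive hypothesis gives $A_S = \prod_{i \in S} B_i$. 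Fact (i) is the case $S = \{1, \ldots, n-1\}$.

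To prove (ii), fix $x, y \in B_n$. Applying the observation with $S = \{1, \ldots, n-2, n\}$ yields $\alpha \in A$ with $\alpha_i = 1$ for $i \leq n-2$ and $\alpha_n = x$; the pairwise hypothesis for the pair $(n-1, n)$ provides $\beta \in A$ with $\beta_{n-1} = 1$ and $\beta_n = y$. The commutator $[\alpha, \beta] \in A$ has $i$-th coordinate $[\alpha_i, \beta_i]$, which vanishes for every $i < n$ (one of $\alpha_i, \beta_i$ is trivial at each such $i$) and equals $[x, y]$ at position $n$. Hence $(1, \ldots, 1, [x, y]) \in A$, and since $B_n$ is perfect its commutators generate $B_n$, giving (ii).

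The critical difficulty is manufacturing $\alpha$ with $n - 2$ prescribed trivial coordinates; the pairwise hypothesis cannot supply this directly, and the recursive appeal to the inductive hypothesis at level $n - 1$ (applied to the subproduct indexed by $\{1, \ldots, n-2, n\}$) is indispensable. The perfection hypothesis is likewise essential, since without it the kernel of the product map $\mathbb{G}_m^n \to \mathbb{G}_m$ gives a proper subgroup with faithfully flat pairwise projections, in which case the conclusion fails.
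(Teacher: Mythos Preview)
Your proof is correct and shares the inductive skeleton of the paper's argument: both apply the induction hypothesis to the image of $A$ in a product of $n-1$ factors to obtain surjectivity onto that subproduct. The divergence is in how the remaining containment is established. The paper shows that the kernel of $A \to B_n$ is a normal subdirect product of $B_1 \times \cdots \times B_{n-1}$ and then invokes the preceding Lemma~\ref{j22} (a normal subdirect product of perfect groups is the full product), which in turn rests on Goursat's Lemma~\ref{j20} and the observation~\ref{j21} that a normal graph forces commutativity. You instead prove directly that $\{1\}^{n-1} \times B_n \subset A$ by using a second application of the induction hypothesis (to the index set $\{1,\ldots,n-2,n\}$) to manufacture elements $\alpha,\beta \in A$ whose commutator is trivial in every coordinate except the last. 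Your route is more self-contained, bypassing the Goursat machinery entirely; the paper's route has the virtue of isolating Lemma~\ref{j22} as a reusable intermediate result. Both arguments ultimately exploit perfection through commutators, just packaged at different stages.
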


\begin{proof}
The lemma is certainly true for $n=2$. Assume that $n>2$, and that the
statement is true for $n-1$. Then the projection $A\rightarrow B_{1}%
\times\cdots\times B_{n-1}$ is faithfully flat, and so the kernel $N$ of the
projection $A\rightarrow B_{n}$ is a normal algebraic subgroup of $B_{1}%
\times\cdots\times B_{n-1}$. By the last lemma, it equals $B_{1}\times
\cdots\times B_{n-1}$, and so there is an exact commutative diagram%
\[
\begin{tikzcd}
1\arrow{r}
&B_{1}\times\cdots\times B_{n-1}\arrow{r}\arrow[equals]{d}
&A\arrow{r}\arrow[hook]{d}
&B_{n}\arrow{r}\arrow[equals]{d}&1\\
1\arrow{r}
&B_{1}\times\cdots\times B_{n-1}\arrow{r}
&B_1\times\cdots\times B_n\arrow{r}
&B_{n}\arrow{r}&1,
\end{tikzcd}
\]
from which it follows that $A=B_{1}\times\cdots\times B_{n}$.
\end{proof}

The results in this section are well-known.

\subsection{Proof of the equality $P^{K}=L^{K}\cap S^{K}$.}

Let $K$ be a CM field, Galois over $\mathbb{Q}$, with a given $p$-adic prime.
Let $\Gamma=\Gal(K/\mathbb{Q})$, and let $D\subset\Gamma$ be the decomposition
group of the given prime. We write $P,S,L,T$ for $P^{K},S^{K},L^{K},T^{K}$.
Recall that we have diagrams%
\[
\begin{tikzcd}
T&S\arrow{l}&X^\ast (T)\arrow{d}{r}\arrow{r}{j}&X^\ast(S)\arrow{d}{s}\\
L\arrow{u}&P\arrow{u}\arrow{l}&X^\ast (L)\arrow{r}{i}&X^\ast(P)
\end{tikzcd}
\]
and have to show that the induced map $S/P\rightarrow T/L$ is injective or,
equivalently, that the map%
\[
j\colon\Ker(r)\rightarrow\Ker(s)
\]
is surjective.

Recall that $\iota$ denotes complex conjugation (on $K$ say), and so
$\langle\iota\rangle$ is the subgroup $\{1,\iota\}$ of $\Gamma$. For a finite
set $Y$ with an action of $\langle\iota\rangle$ and a positive integer $d$,
define%
\begin{align*}
\mathbb{Z}{}[Y]  &  =\text{free abelian group on }Y=\{\text{maps }f\colon
Y\rightarrow\mathbb{Z}\text{ or sums }\sum f(y)y\mathbb{\}}\text{,}\\
\mathbb{Z}{}[Y]^{d}  &  =\{f\in\mathbb{Z}{}[Y]\mid\exists c\in\mathbb{Z}\text{
such that }f+\iota f=d\cdot c\text{ (constant function)}\},\\
\mathbb{Z}{}[Y]_{0}  &  =\mathbb{Z}{}[Y]/\{f\mid f=\iota f\text{ and }\sum
f(y)=0\}.
\end{align*}
Then (see \S \S 1--5 of the \cite{milne1999lm}; also \cite{milne2001},
especially the diagram in A.8) the second of the above diagrams can be
identified with%
\[
\begin{tikzcd}
\mathbb{Z}[\mathcal{S}]_0\arrow{r}{j}\arrow{d}{r}
&\mathbb{Z}[\Gamma]^1\arrow{d}{s}\\
\mathbb{Z}[{\mathcal{P}}]_0\arrow{r}{i}
&\mathbb{Z}[\Gamma/D]^d.
\end{tikzcd}
\]
Here $\mathcal{S}{}$ is the set of CM-types on $K$, i.e., functions
$\varphi\colon\Gamma\rightarrow\{0,1\}$ such that $\varphi+\iota\varphi=1$,
and $\mathcal{P}$ is the set\footnote{Let $v$ be the given $p$-adic prime on
$K$, so that $d=[K_{v}\colon\mathbb{Q}{}_{p}]$. The map $\tau\mapsto\tau v$
defines a bijection of $\Gamma/D$ onto the set $X$ of $p$-adic primes of $K$.
Let $\pi\in W(p^{\infty})$, and let $s_{\pi}\colon X\rightarrow\mathbb{Q}$ be
the corresponding slope function (Milne 2001, A.6). For $\pi\in$
$W^{K}(p^{\infty})$, $d\cdot s_{\pi}$ takes values in $\mathbb{Z}{}$, and the
correspondence%
\[
d\cdot s_{\pi}\leftrightarrow\pi
\]
identifies $\mathcal{P}{}$ with the set of integral elements of weight $1$ (or
maybe $-1$, depending on conventions) in $W^{K}(p^{\infty})$, i.e., with the
Weil numbers in the sense of Tate 1968/69 modulo roots of $1$ corresponding to
abelian varieties over $\mathbb{F}{}$ whose endomorphism algebra is split by
$K$.} of functions $\pi\colon\Gamma/D\rightarrow\{0,1,\ldots,d\}$,
$d=(D\colon1)$, such that $\pi+\iota\pi=d$. The horizontal maps send a formal
sum to a sum of functions, e.g., $j$ sends the formal sum $\sum f(\varphi
)\varphi$ to the function $\tau\mapsto\sum f(\varphi)\varphi(\tau)\colon
\Gamma\rightarrow\mathbb{Z}{}$. The map $r $ sends $\varphi$ to $r(\varphi)$
where $r(\varphi)(\tau D)=\sum_{\sigma\in D}\varphi(\tau\sigma)$, and $s$
sends $f$ to $s(f)$ where $s(f)(\tau D)=\sum_{\sigma\in D}f(\tau\sigma)$.

\textbf{Case I: }$\iota\in D$. Choose a set $B$ of coset representatives for
$\langle\iota\rangle$ in $D$ and a set $A$ of coset representatives for $D$ in
$\Gamma$. Then, as a map of $\langle\iota\rangle$-sets, $\Gamma\rightarrow
\Gamma/D$ can be identified with the projection map%
\[
A\times B\times\{0,1\}\rightarrow A\times\{0,1\}\text{.}%
\]
Here $\iota$ acts only on $\{0,1\}$. Thus, the diagram becomes%
\[
\begin{CD} \mathbb{Z}[\mathcal{S}]_0 @>>>\mathbb{Z}[A\times B\times\{0,1\}]^1\\ @VVrV@VVsV\\ \mathbb{Z}[\mathcal{P}]_0@>>>\mathbb{Z}[A\times\{0,1\}]^d.\\ \end{CD}
\]
With this notation, a CM-type on $K$ is a function $\varphi\colon A\times
B\times\{0,1\}\rightarrow\{0,1\}$ such that
\[
\varphi(a,b,0)+\varphi(a,b,1)=1,\quad\text{all }a,b\text{,}%
\]
and an element of $\mathcal{P}$ is a function $\pi\colon A\times
\{0,1\}\rightarrow\{0,\ldots,d\}$ such that%
\[
\pi(a,0)+\pi(a,1)=d,\quad\text{all }a.
\]

For each $a,b$, let $\varphi_{a,b}$ be the CM-type such that
\[
\varphi_{a,b}(x,y,0)=1\iff(x,y)=(a,b),
\]
and let $\varphi^{\prime}$ be the CM-type such that
\[
\varphi(x,y,z)=z.
\]
Then $\varphi^{\prime}$ and the $\varphi_{a,b}$ generate $\mathbb{Z}{}[A\times
B\times\{0,1\}]^{1}$: if $f$ is a function on $A\times B\times\{0,1\}$ such
that $f+\iota f=c$ ($c\in\mathbb{Z}{}$), then%
\[
f=\sum_{a,b}f(a,b,0)\varphi_{a,b}+\Big(  c-\sum_{a,b}f(a,b,0)\Big)
\varphi^{\prime}\text{.}%
\]

Let $\pi_{a}=r(\varphi)_{a,b}$ and $\pi^{\prime}=r(\varphi^{\prime})$: thus
$\pi_{a}$ is the element of $\mathcal{P}$ such that $\pi_{a}(x,0)=1$ for $x=a$
and is zero otherwise, and $\pi^{\prime}(x,z)=d\cdot z$. Clearly, the $\pi
_{a}$ and $\pi^{\prime}$ are linearly independent.

We now prove this case of the theorem. Let $f\in\mathbb{Z}{}[A\times
B\times\{0,1\}]^{1}$, and write it%
\[
f=\sum_{a,b}n(a,b)\cdot\varphi_{a,b}+n\cdot\varphi^{\prime},
\]
so%
\[
s(f)=\sum_{a}\Big(  \sum_{b}n(a,b)\Big)  \pi_{a}+n\pi^{\prime}\text{.}%
\]
If $s(f)=0$, then, because of the linear independence of the $\pi_{a}$ and
$\pi^{\prime}$,
\begin{align*}
\sum_{b\in B}n(a,b)  &  =0,\quad\text{all }a\in A\text{, and}\\
n  &  =0\text{.}%
\end{align*}
The first equation implies that $\sum_{b}n(a,b)\varphi_{a,b}$ is in the kernel
of $r$, which completes the proof.

\textbf{Case II: }$\iota\notin D$. This case is so similar to the preceding
that it should be left as an exercise to the reader.

\bibliographystyle{cbe}
\bibliography{D:/Current/refs}

\end{document}